\definecolor{dullmagenta}{rgb}{0.4,0,0.4}
\definecolor{darkblue}{rgb}{0,0,0.4}
\newtheorem{theorem}{Theorem}[section]
\newtheorem{lemma}[theorem]{Lemma}
\newtheorem{proposition}[theorem]{Proposition}
\newtheorem{corollary}[theorem]{Corollary}
\theoremstyle{definition}
\newtheorem{definition}[theorem]{Definition}
\newtheorem{example}[theorem]{Example}
\newtheorem{remark}[theorem]{Remark}
\begin{document}

\title[Loewy ]{The Loewy series of an FCP (distributive) ring extension}

\author[G. Picavet and M. Picavet]{Gabriel Picavet and Martine Picavet-L'Hermitte}
\address{Math\'ematiques \\
8 Rue du Forez, 63670 - Le Cendre\\
 France}
\email{picavet.mathu (at) orange.fr}

\begin{abstract} If $R\subseteq  S$ is a ring extension of commutative rings,  we consider the lattice $([R,S],\subseteq)$ of all  the $R$-subalgebras of $S$. We assume that the poset $[R,S]$ is both Artinian and Noetherian; that is, $R\subseteq S$ is an FCP extension.    The Loewy series of such lattices are studied. Most of  main results are
gotten in case these posets are distributive, which occurs  for integrally closed extensions. In general, the situation is much more complicated. We give a discussion for finite field extensions. 

\end{abstract}

\subjclass[2010]{Primary:13B02,13B21, 13B22, 06E05, 06D05;  Secondary: 13B30, 12F10}

\keywords  {FIP, FCP extension, minimal extension,   support of a module, distributive lattice, Boolean lattice, 
 atom,
  socle, Loewy series, Galois extension}

\maketitle

\section{Introduction and Notation}

  If $L$ is a complete  lattice, with smallest  and greatest elements, its socle 
$\mathcal S(L)$ is defined as the   supremum of all its atoms. Then the Loewy series  of $L$ is defined by transfinite  induction, where in particular $\mathcal S_{i+1}(L) = \mathcal S(\mathcal S_i(L))$
 for a positive integer $i$ (See Section 3 for more details).   When $L$ is the lattice of submodules of a module, the Loewy series  of $L$ is a well known topic and its theory a long chapter of algebra, even when the base ring is non-commutative. 

In this paper, we  consider the category of commutative and unital rings, whose    epimorphisms will be involved. If  $R\subseteq S$ is a (ring) extension, we denote  by $[R,S]$ the set of all $R$-subalgebras of $S$ and
  set $]R,S[: =[R,S]\setminus \{R,S\}$ (with a similar definition for $[R,S[$ or $]R,S]$). 
 
  We will consider lattices of the following form.
    For an extension $R\subseteq S$, the poset $([R,S],\subseteq)$ is a {\it complete} lattice,  where the supremum of any non void subset  is the compositum of its elements, which we call {\it product} from now on and denote by $\Pi$ when necessary, and the infimum of any non void subset is the intersection of its elements.    We emphasize on the following.  If $R\subseteq S$ is a ring extension, our main interest is   in the properties of the  Loewy series related to the lattice $[R,S]$, and not in the lattice of  $R$-submodules ($R/C$-submodules)  of $S/R$, where $C$ is the conductor of $R\subseteq S$, although there are some relations. Moreover,  we only consider extensions of finite length, in a sense defined  below, so that Loewy lengths are finite in this paper.    
      
    As a general rule, an extension $R\subseteq S$ is said to have some property of lattices if $[R,S]$ has this property.
 We use lattice definitions and properties described in \cite{NO}. 
 
 The extension $R\subseteq S$ is said to have FIP (for the ``finitely many intermediate algebras property") or  is
 an FIP  extension if $[R,S]$ is finite. A {\it chain} of $R$-subalgebras of $S$ is a set of elements of $[R,S]$ that are pairwise comparable with respect to inclusion.    
 We will say that $R\subseteq S$ is chained if $[R,S]$ is a chain. We  also say that the extension $R\subseteq S$ has FCP  (or is an FCP   extension) if each chain in $[R,S]$ is finite. Clearly,  each extension that satisfies FIP must also satisfy FCP. 
Dobbs and the authors characterized FCP and FIP extensions \cite{DPP2}. 
  
This paper is a continuation of our earlier  paper \cite{Pic 10}, where we considered Boolean ring extensions. It is devoted to the study of Loewy series of an FCP 
 (distributive)
 extension, a notion linked to Boolean extensions.  As much  as  possible, we give results for FCP extensions that are not necessarily distributive, in particular, for the behavior of the Loewy series with respect to classical constructions of ring theory. It may be asked whether the distributivity property may be replaced with the modular condition, since the lattice of submodules of a module is evidently modular.  
 
  In a  forthcoming paper, we study distributive extensions.
Note that integrally closed FCP extensions are distributive.

Our main tool will be  the minimal (ring) extensions, a concept that was introduced by Ferrand-Olivier \cite{FO}. 
In our context, minimal extensions coincide with atoms. They are completely known (see Section 2).  Recall that an extension $R\subset S$ is called {\it minimal} if $[R, S]=\{R,S\}$. 
  The key connection between the above ideas is that if $R\subseteq S$ has FCP, then any maximal (necessarily finite) chain $\mathcal C$ of $R$-subalgebras of $S$, $R=R_0\subset R_1\subset\cdots\subset R_{n-1}\subset R_n=S$, with {\it length} $\ell(\mathcal C):=n <\infty$, results from juxtaposing $n$ minimal extensions $R_i\subset R_{i+1},\ 0\leq i\leq n-1$. 
An FCP extension is finitely generated, and  (module) finite if integral.
For any extension $R\subseteq S$, the {\it length} $\ell[R,S]$ of $[R,S]$ is the supremum of the lengths of chains of $R$-subalgebras of $S$. Notice  that if $R\subseteq S$ has FCP, then there {\it does} exist some maximal chain of $R$-subalgebras of $S$ with length $\ell[R,S]$ \cite[Theorem 4.11]
{DPP3}.

Any undefined  material
 is explained at the end of the section or in the next sections. 

 Section 2 is  devoted  to some recalls and results on ring extensions  and their lattice properties.
 
 In Section 3, we study the Loewy series of a 
 arbitrary    FCP extension. 
  As a first property, the Loewy series behaves well with respect to localization (Proposition  \ref{8.8}).
  Let $R\subset S$ be a distributive FCP extension. Proposition \ref{8.3} shows that the {\it Loewy series} $S_0:=R\subset \ldots\subset S_i\subset \ldots\subset S_n:=S$ is such that $S_i\subset S_{i+1}$ is a Boolean extension for each $i=0,\ldots,n-1$. In particular, Theorem \ref{8.6} gives a characterization of such extensions verifying $[R,S]=\cup_{i=0}^n[S_i,S_{i+1}]$.   We give computations of Loewy series for some special extensions or some subextensions, for example for Nagata extensions. We  also show how to compute the Loewy series of some modules by using the Loewy series of a  ring extension.
 We give many examples. 
 For instance, if $R\subseteq S$ is an FCP almost-Pr\"ufer extension with Pr\"ufer hull $\tilde R$, the Loewy series of $R\subseteq S$ is gotten by using the Loewy series of $R\subseteq \overline R$ and $R\subseteq \tilde R$ (Corollary  \ref{8.13}).  
 Note here that ring extensions  whose Loewy length is  $1$ are, for example, Boolean extensions and pointwise  minimal extensions  
  (Corollary  \ref{8.31} and Proposition  \ref{8.312}).
  
  Section 4  specially deals   with field extensions. 
   We begin with the characterization of the Loewy series of a finite field extension $k\subseteq L$ by means of the Loewy series of $k\subseteq T$ and $k\subseteq U$, where $T$ (resp. $U$) is the separable (resp. radicial) closure of $k\subseteq L$ (Proposition  \ref{9.3}). 
  Loewy series  of finite cyclic field extensions (they are necessarily distributive)  are 
   completely determined 
  in Theorem~\ref{8.131}.
 
 We denote by $(R:S)$ the conductor of $R\subseteq S$. The integral closure of $R$ in $S$ is denoted by $\overline R^S$ (or by $\overline R$ if no confusion can occur).  The characteristic of a field $k$ is denoted by $\mathrm{c}(k)$. A purely inseparable field extension is called 
{\it radicial} in this paper.  In particular, if $k\subset L$ is a radicial FIP field extension, then $[k,L]$ is a chain. 

  Finally,  $|X|$  is the cardinality of a set $X$,   $\subset$ denotes proper inclusion and, for a positive integer $n$, we set $\mathbb{N}_n:=\{1,\ldots,n\}$.  

 \section {Recalls and results on ring extensions}
This section is devoted to two types of recalls: commutative rings and lattices.

\subsection{Rings and ring extensions}
  A {\it local} ring is here what is called elsewhere a quasi-local ring. As usual, Spec$(R)$ and Max$(R)$ are the set of prime and maximal ideals of a ring $R$.  
  The support of an $R$-module $E$ is $\mathrm{Supp}_R(E):=\{P\in\mathrm{Spec }(R)\mid E_P\neq 0\}$, and $\mathrm{MSupp}_R(E):=\mathrm{Supp}_R(E)\cap\mathrm{Max}(R)$. If $E$ is an $R$-module, ${\mathrm L}_R(E)$ (also denoted $L(M)$) is its length.

 If $R\subseteq S$ is a ring extension and $P\in\mathrm{Spec}(R)$, then $S_P$ is both the localization $S_{R\setminus P}$ as a ring and the localization at $P$ of the $R$-module $S$. 
  We denote by $\kappa_R(P)$ the residual field $R_P/PR_P$ at $P$. 
   An extension $R\subset S$ is called {\it locally minimal} if $R_P\subset S_P$ is minimal for each $P\in\mathrm{Supp}(S/R)$  or equivalently for each $P\in\mathrm{MSupp}(S/R)$.    

The following notions and results are  deeply involved in the sequel. 

\begin{definition}\label{crucial 1}\cite[Definition 2.10]{CPP} An extension $R\subset S$ is called  {\it $M$-crucial} if 
 $\mathrm{Supp}(S/R)=\{M\}$. Such $M$
is  called the {\it crucial (maximal) ideal}  $\mathcal{C}(R,S)$ of $R\subset S$. 
\end{definition}

\begin{theorem}\label{crucial}\cite[Th\'eor\`eme 2.2]{FO} A  minimal extension  is crucial  and is either integral ((module)-finite) or a flat epimorphism.
\end{theorem}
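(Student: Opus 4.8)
The plan is to exploit three features of a minimal extension $R\subset S$: it is singly generated, it localizes well, and its integral closure is again an intermediate ring.

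\textbf{Reductions.} First, $S=R[s]$ for every $s\in S\setminus R$, since $R[s]\in[R,S]$ and $R[s]\neq R$. Fix such an $s$ and put $\mathfrak{a}:=\{r\in R:rs\in R\}$; since $S_P=R_P[s/1]$, one checks directly that $P\in\mathrm{Supp}(S/R)$ if and only if $\mathfrak{a}\subseteq P$, so $\mathrm{Supp}(S/R)=V(\mathfrak{a})$ is a nonempty closed subset of $\mathrm{Spec}(R)$. Next, for $P\in\mathrm{Spec}(R)$ the contraction map $T\mapsto\{x\in S:x/1\in T\}$ embeds $[R_P,S_P]$ into $[R,S]=\{R,S\}$ (localization is a left inverse), so $R_P\subseteq S_P$ has no intermediate ring; when it is minimal one moreover gets $\{x\in S:x/1\in R_P\}=R$, i.e.\ $S/R$ embeds into $(S/R)_P$. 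Finally, $\overline R^S\in[R,S]$, so $\overline R^S=S$ or $\overline R^S=R$. If $\overline R^S=S$, then $S=R[s]$ with $s$ integral over $R$, hence $S$ is a finitely generated $R$-module: this gives the ``integral $\Rightarrow$ module-finite'' part.

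\textbf{Cruciality in the integral case.} Put $\mathfrak{C}=(R:S)$. The extension $R/\mathfrak{C}\subseteq S/\mathfrak{C}$ is again minimal and has zero conductor, so we may assume $(R:S)=0$ and need only show $R$ is a field. For a maximal ideal $\mathfrak{m}$ of $R$, the subring $R+\mathfrak{m}S$ lies in $[R,S]$, hence equals $R$ or $S$. If $R+\mathfrak{m}S=R$ for some $\mathfrak{m}$, then $\mathfrak{m}S\subseteq R$ gives $\mathfrak{m}\subseteq(R:S)=0$, so $R$ is a field. Otherwise $R+\mathfrak{m}S=S$ and hence $(S/R)_{\mathfrak{m}}=\mathfrak{m}(S/R)_{\mathfrak{m}}$ for every $\mathfrak{m}$; as $S/R$ is a finitely generated $R$-module, Nakayama forces $(S/R)_{\mathfrak{m}}=0$ for all $\mathfrak{m}$, whence $S=R$, a contradiction. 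Therefore $(R:S)$ is maximal and $\mathrm{Supp}(S/R)=V((R:S))$ is a single maximal ideal.

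\textbf{The integrally closed case.} Suppose now $\overline R^S=R$. Running the same analysis of $R+\mathfrak{m}S$, this subring must equal $S$ for \emph{every} maximal ideal $\mathfrak{m}$: otherwise $\mathfrak{m}S\subseteq R$ forces $\mathfrak{m}S\subseteq\mathfrak{m}$, so $\mathfrak{m}$ is an ideal of $S$, and then $R/\mathfrak{m}\subseteq S/\mathfrak{m}=(R/\mathfrak{m})[\bar s]$ is a minimal extension of a field, hence algebraic, producing a monic equation for $s$ over $R$ modulo $\mathfrak{m}\subseteq R$ --- and so over $R$ --- against $\overline R^S=R$ and $s\notin R$. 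Since flatness and the property of being an epimorphism are local on $R$, and $\mathrm{Supp}(S/R)=V(\mathfrak{a})$ is closed, everything now reduces to a local statement: if $(R,M)$ is local, $R\subset S$ is minimal, and $R$ is integrally closed in $S$, then $MS=S$ --- equivalently, no prime of $S$ lies over $M$ --- and $S$ is a fraction ring of $R$, say $S=R_{\mathfrak{p}}$ for a prime $\mathfrak{p}\subset M$. Granting this local statement (applied at each maximal ideal of $R$ lying in $\mathrm{Supp}(S/R)$), the closedness of $V(\mathfrak{a})$ together with the absence of primes strictly below its maximal members forces $\mathrm{Supp}(S/R)$ to reduce to a single maximal ideal $M$ (cruciality), and $R\to S$ --- a fraction ring extension after localizing at every maximal ideal of $R$ --- is a flat epimorphism. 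I expect this local statement, and above all the equality $MS=S$, to be the main obstacle: it is precisely the point where minimality and integral-closedness must be used together, and the soft tools above no longer suffice (in particular $S/R$ need not be a finitely generated module, so Nakayama is unavailable).
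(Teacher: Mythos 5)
The paper offers no proof of this statement: it is stated with a citation to Ferrand--Olivier's Th\'eor\`eme~2.2 and used as a black box. So there is no in-paper argument to compare with, and I can only evaluate your attempt on its own merits.

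Your treatment of the integral case is correct and essentially complete. The observations that $S=R[s]$ for any $s\in S\setminus R$, that $\mathrm{Supp}(S/R)=V(\mathfrak{a})$ with $\mathfrak{a}=(R:_R s)$, that $[R_P,S_P]$ embeds into $[R,S]$ via contraction, and that $\overline{R}^{S}\in\{R,S\}$ are all sound. In the case $\overline{R}^S=S$, the reduction modulo the conductor $(R:S)$, the dichotomy $R+\mathfrak{m}S\in\{R,S\}$, the Nakayama argument ruling out $R+\mathfrak{m}S=S$ for all $\mathfrak{m}$, and the conclusion that $(R:S)$ is maximal with $\mathrm{Supp}(S/R)=V((R:S))$ a singleton, form a valid and clean proof of cruciality and module-finiteness in the integral case.

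The gap, as you yourself flag, lies entirely in the case $\overline{R}^{S}=R$. Your argument that $R+\mathfrak{m}S=S$ for every maximal $\mathfrak{m}$ is correct (the case $\mathfrak{m}S\subseteq R$ makes $\mathfrak{m}$ an ideal of $S$, passes to a minimal extension of the field $R/\mathfrak{m}$, and manufactures an integral dependence for $s$, contradicting $\overline{R}^S=R$ and $s\notin R$). But that equality is far from the conclusion. The ``local statement'' you defer --- that for local $(R,M)$ with $R$ integrally closed in $S$ and $R\subset S$ minimal, $MS=S$ and $S$ is a flat epimorphic fraction ring of $R$ --- is precisely the technical core of Ferrand--Olivier's theorem, and nothing you have assembled proves it: $R+MS=S$ alone does not give $MS=S$, $S/R$ need not be finitely generated so Nakayama-type tools are unavailable, and flatness of $S$ is nowhere in sight. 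Moreover, the closing ``granting this\dots'' paragraph is also not watertight: even granting the local statement at each maximal ideal of the support, your appeal to the closedness of $V(\mathfrak{a})$ and the ``absence of primes strictly below its maximal members'' does not by itself collapse $\mathrm{Supp}(S/R)$ to a single maximal ideal; a closed subset of $\mathrm{Spec}(R)$ consisting only of maximal ideals can perfectly well have more than one point. So cruciality in the flat-epimorphism case needs its own argument and you have not supplied one. In short: the integral half is done, but the flat-epimorphism half --- which is where the real work of Ferrand--Olivier lies --- is genuinely missing, and you have correctly identified the missing piece.
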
 

  Recall  that an  extension $R\subseteq S$ is called {\it Pr\"ufer}  if $R\subseteq T$ is a flat epimorphism for each $T\in[R,S]$ (or equivalent, if $R\subseteq S$ is a normal pair) \cite[Theorem 5.2]{KZ}.
 In \cite{Pic 5}, we called an extension which is a minimal flat epimorphism, a {\it Pr\"ufer minimal} extension. Three types of minimal integral extensions exist, characterized in the next theorem, (a consequence of  the fundamental lemma of Ferrand-Olivier), so that there are four types of minimal extensions, mutually exclusive.

\begin{theorem}\label{minimal} \cite [Theorem 2.2]{DPP2} Let $R\subset T$ be an extension and  $M:=(R: T)$. Then $R\subset T$ is minimal and finite if and only if $M\in\mathrm{Max}(R)$ and one of the following three conditions holds:

\noindent (a) {\bf inert case}: $M\in\mathrm{Max}(T)$ and $R/M\to T/M$is a minimal field extension.

\noindent (b) {\bf decomposed case}: There exist $M_1,M_2\in\mathrm{Max}(T)$ such that $M= M _1\cap M_2$ and the natural maps $R/M\to T/M_1$ and $R/M\to T/M_2$ are both isomorphisms.

\noindent (c) {\bf ramified case}: There exists $M'\in\mathrm{Max}(T)$ such that ${M'}^2 \subseteq M\subset M',\  [T/M:R/M]=2$, and the natural map $R/M\to T/M'$ is an isomorphism.

In each of the above  cases, $M=\mathcal{C}(R,T)$.
\end{theorem}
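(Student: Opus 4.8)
The plan is to prove the two implications separately, in each case reducing to the residue field $R/M$ and invoking the classification of minimal extensions of a field; the fundamental lemma of Ferrand--Olivier \cite{FO} does the geometric bookkeeping. For the forward implication, assume $R\subset T$ is minimal and finite, so by Theorem \ref{crucial} it is integral and crucial, say $\mathrm{Supp}(T/R)=\{M_0\}$. I would first pin down the conductor: by the fundamental lemma one may localize at $M_0$ without destroying minimality, while $R_P=T_P$ for $P\neq M_0$, and since $T$ is $R$-finite conductors commute with localization, so it suffices to treat the local case $(R,M)$ and prove $(R:T)=M$ there. In that case $T/R$ is a nonzero finite $R$-module, so $M(T/R)\neq T/R$ by Nakayama; as one checks $R+MT\in[R,T]$, minimality forces $R+MT=R$, i.e. $MT\subseteq R$, whence $MT$ is a $T$-ideal inside $R$, so $M\subseteq MT\subseteq(R:T)$, while $(R:T)\subsetneq R$ and locality give $(R:T)\subseteq M$; hence $(R:T)=M\in\mathrm{Max}(R)$. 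Reassembling, $(R:T)=M_0=\mathcal C(R,T)$, a maximal ideal of $R$.

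Next I would reduce to a field extension: put $M:=(R:T)$, $k:=R/M$, $A:=T/M$. Since $M$ is a common ideal of $R$ and $T$, the map $U\mapsto U/M$ is a poset isomorphism $[R,T]\xrightarrow{\sim}[k,A]$, and $A$ is a finite-dimensional $k$-algebra, so $k\subset A$ is a minimal extension. Writing $A=\prod_iA_i$ as a finite product of local Artinian $k$-algebras, the engine of the classification is that $k\cdot1+V\in[k,A]$ for every $k$-subspace $V$ closed under multiplication (for instance $V=ke$ for an idempotent $e$, or any subspace of $\mathrm{Nil}(A)$ once $\mathrm{Nil}(A)^2=0$). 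Feeding suitable $V$ into minimality yields the trichotomy: (i) $A$ a field, so $k\subset A$ is a minimal field extension and $M\in\mathrm{Max}(T)$ --- the inert case (a); (ii) $A$ reduced but not a field, where a nontrivial idempotent $e$ forces $A=k[e]\cong k\times k$, and pulling its two maximal ideals back to $M_1,M_2\subset T$ gives the decomposed case (b); (iii) $A$ not reduced, where $k+\mathrm{Nil}(A)=A$ forces $A$ local with residue field $k$, then $k+\mathrm{Nil}(A)^2=k$ (else Nakayama kills $\mathrm{Nil}(A)$) forces $\mathrm{Nil}(A)^2=0$, and minimality among the $k+V$, $V\subseteq\mathrm{Nil}(A)$, forces $\dim_k\mathrm{Nil}(A)=1$, so $[A:k]=2$; pulling $\mathrm{Nil}(A)$ back to $M'\subset T$ gives the ramified case (c). In every case $M=(R:T)=\mathcal C(R,T)$.

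For the converse, assume $M:=(R:T)\in\mathrm{Max}(R)$ and one of (a), (b), (c) holds. Since $M$ is a common ideal, $[R,T]\cong[R/M,T/M]$, so it is enough to check that $T/M$ is a finite-dimensional minimal extension of $k:=R/M$: granting this, $R\subset T$ is minimal, and module-finite since $T$ is generated over $R$ by $1$ together with lifts of a $k$-basis of $T/M$ (using $M\subseteq R$). In case (a) this is exactly the hypothesis (a minimal field extension is automatically of finite degree); in case (b) the Chinese Remainder Theorem gives $T/M\cong T/M_1\times T/M_2\cong k\times k$, minimal of $k$-dimension $2$; in case (c), $[T/M:k]=2$ with $M'/M$ an ideal squaring to zero, so $T/M\cong k[\varepsilon]/(\varepsilon^2)$, again minimal of $k$-dimension $2$.

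The step I expect to be the main obstacle is the forward direction: identifying the conductor with the crucial maximal ideal --- the Nakayama bound on $R+MT$ and its compatibility with localization --- and then running the trichotomy cleanly, in particular producing in each non-minimal configuration an honest intermediate algebra of the form $k\cdot1+V$ to force the contradiction and matching the resulting data against (a), (b), (c).
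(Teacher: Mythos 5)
The paper states this theorem purely as a citation of \cite[Theorem 2.2]{DPP2} and gives no proof of its own, so there is no in-paper argument to compare yours against. Your proof is correct, and it is essentially the standard argument (up to exposition, the one in Ferrand--Olivier and the cited reference): Theorem \ref{crucial} makes the extension integral and crucial; the Nakayama argument on the intermediate $R$-algebra $R+MT$ after localizing at the crucial ideal identifies $(R:T)$ with that maximal ideal; the common-ideal lattice isomorphism $[R,T]\cong[R/M,T/M]$ reduces the classification to minimal finite-dimensional $k$-algebras $A\supsetneq k$ over the field $k=R/M$; and the trichotomy field / reduced non-field / non-reduced, run via subalgebras of the form $k\cdot 1+V$, yields the inert, decomposed and ramified cases. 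The converse is then immediate from the same lattice isomorphism together with the Chinese Remainder Theorem in case (b) and the $\dim_k=2$ computation in cases (b) and (c).

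Two spots worth making explicit when you write this up. First, you use ``integral'' tacitly: to invoke only case (a)--(c) one must rule out the flat-epimorphism branch of Theorem \ref{crucial}, which follows because a module-finite injective flat ring epimorphism is faithfully flat and hence an isomorphism, contradicting $R\neq T$. Second, in the reassembly step one should note that for any prime $P\neq M_0$ the localization of the maximal ideal $M_0$ at $P$ is all of $R_P$; since $(R:T)=\mathrm{Ann}_R(T/R)$ commutes with localization for the finite module $T/R$, this shows $(R:T)$ and $M_0$ agree at every prime and hence are equal. Neither is a gap in the reasoning, only a line the reader must currently fill in.
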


\subsection{Lattice Properties}
 Let $R\subseteq S$ be an FCP extension, then $[R,S]$ is a complete Noetherian Artinian lattice, $R$ being the least element and $S$  the largest. In the context of the lattice $[R,S]$, some  definitions and properties of lattices have the following formulations. (see \cite{NO})

An element $T\in[R,S]$ is called 

(1) $\Pi$-{\it irreducible} (resp. $\cap$-{\it irreducible})  if $T=T_1T_2$ (resp.  $T=T_1\cap T_2$) implies $T=T _1$ or $T=T_2$. 

 (2) an {\it atom} (resp. a {\it co-atom})  if and only if  $R\subset T$   (resp. $T\subset S$)  is a minimal extension. 
  Therefore, an atom (resp. a co-atom)  is $\Pi$-irreducible (resp. $\cap$-irreducible). We denote by $\mathcal{A}$ (resp. $\mathcal{CA}$) the set of atoms (resp. co-atoms) of $[R,S]$. 
    Theorems \ref{crucial} and \ref{minimal} show that there are four types of atoms.
 
 (3)  {\it essential} if $T\neq R$ and $ U\cap T \neq  R$ for each $U\in ]R,S]$. If $N$ is an $R$-submodule of an $R$-module $M$, then $N$ is {\it essential} as a submodule if $N\cap N'\neq 0$ for any submodule $N'\neq 0$ of $M$. Clearly, $T$ is essential if  $T/R$ is an essential submodule of the $R$-module $S/R$.  

(4)  $R\subseteq S$ is called {\it catenarian}, or graded by some authors,  if $R\subset S$ has FCP and all maximal chains between two comparable elements have the same length.

(5) $R\subseteq S$ is called {\it distributive} if intersection and product are each distributive with respect to the other. Actually, each distributivity implies the other \cite[Exercise 5, page 33]{NO}. 

(6) Let $T\in[R,S]$. Then, $T'\in[R,S]$ is called a {\it complement} of $T$ if $T\cap T'=R$ and $TT'=S$. 

(7) An extension $R\subseteq S$ is called  {\it Boolean} if 
 $([R,S],\cap,\cdot)$ is a  distributive lattice such that each $T\in[R,S]$ has a (necessarily unique) complement.

   \begin{proposition} \label{1.0} \cite[Theorem 1, p. 172]{G} A distributive lattice of finite length is catenarian (the Jordan-H\"older chain condition holds).
\end{proposition}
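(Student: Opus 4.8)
The plan is to derive the Jordan--H\"older chain condition from the fact that a distributive lattice is (upper) semimodular, and then run the classical length induction; since the statement is cited from \cite[Theorem~1, p.~172]{G}, I only indicate the argument. Throughout write $x\lessdot y$ when $y$ covers $x$, and for comparable elements $a\le b$ of our distributive lattice $L$ let $\ell[a,b]$ denote the supremum of lengths of chains contained in $[a,b]$; by the finite-length hypothesis each $\ell[a,b]$ is a non-negative integer, and this is all the hypothesis will be used for.

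The only place distributivity genuinely intervenes is the following elementary fact, which I would isolate first: \emph{if $a\wedge b\lessdot b$, then either $a=a\vee b$ or $a\lessdot a\vee b$.} To see it, take any $c$ with $a\le c\le a\vee b$; distributivity gives $c=c\wedge(a\vee b)=(c\wedge a)\vee(c\wedge b)=a\vee(c\wedge b)$, and since $a\wedge b\le c\wedge b\le b$ with $a\wedge b\lessdot b$, we must have $c\wedge b\in\{a\wedge b,\,b\}$, hence $c\in\{a,\,a\vee b\}$; thus $[a,a\vee b]=\{a,a\vee b\}$ whenever $a\ne a\vee b$. (This is just the usual upper semimodularity of modular lattices, the transposition $x\mapsto x\vee a$ being an isomorphism $[a\wedge b,b]\xrightarrow{\sim}[a,a\vee b]$; I would state it in whichever form is most convenient.)

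Next I would fix comparable $a\le b$ and prove, by induction on $\ell[a,b]$, that any two maximal chains of $[a,b]$ have the same length. The cases $\ell[a,b]\le 1$ are trivial. If $\ell[a,b]\ge 2$, let $a=c_0\lessdot c_1\lessdot\cdots\lessdot c_n=b$ and $a=d_0\lessdot d_1\lessdot\cdots\lessdot d_m=b$ be two maximal chains (necessarily $n,m\ge 2$, since a maximal chain of length $1$ would force $[a,b]=\{a,b\}$ and $\ell[a,b]=1$), and note $\ell[c_1,b]<\ell[a,b]$ and $\ell[d_1,b]<\ell[a,b]$ because $c_1,d_1>a$. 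If $c_1=d_1$, the two truncated chains are maximal chains of $[c_1,b]$, so the inductive hypothesis gives $n-1=m-1$. If $c_1\ne d_1$, then $c_1\wedge d_1=a$ (both cover $a$), so the fact above yields $c_1\lessdot e$ and $d_1\lessdot e$ for $e:=c_1\vee d_1$. Choosing a maximal chain $e\lessdot\cdots\lessdot b$ of some length $p$, prepending $c_1$ gives a maximal chain of $[c_1,b]$ of length $p+1$; comparing it with $c_1\lessdot c_2\lessdot\cdots\lessdot c_n=b$ via the inductive hypothesis on $[c_1,b]$ forces $p+1=n-1$. Prepending $d_1$ instead gives a maximal chain of $[d_1,b]$ of length $p+1=n-1$, which the inductive hypothesis on $[d_1,b]$ matches against $d_1\lessdot d_2\lessdot\cdots\lessdot d_m=b$ to give $n-1=m-1$. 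In all cases $m=n$, which also shows (since every chain extends to a maximal one) that the common length equals $\ell[a,b]$.

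Specializing to the comparable pair under consideration gives the catenarian property. The only substantive ingredient is the semimodularity fact of the second paragraph, which is where distributivity is used and which is entirely routine; everything else is bookkeeping with coverings and interval lengths. The one point to be careful about is not to assume at the outset that a given maximal chain realizes $\ell[a,b]$ — the induction must be set up comparing two \emph{arbitrary} maximal chains, the identity with $\ell[a,b]$ emerging only a posteriori. One could of course bypass all of this and simply cite \cite[Theorem~1, p.~172]{G}, or the general Jordan--H\"older theorem for semimodular lattices of finite length.
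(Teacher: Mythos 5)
The paper offers no proof here: Proposition \ref{1.0} is stated with a bare citation to Gr\"atzer, \cite[Theorem~1, p.~172]{G}. Your proposal supplies the argument that citation stands for, and it is correct. The semimodularity lemma (if $a\wedge b\lessdot b$ then $a\le a\vee b$ is a covering or an equality) is established cleanly from the distributive law, and the length induction is set up properly — in particular you are right to compare two arbitrary maximal chains rather than assuming either one realizes $\ell[a,b]$, and you correctly observe that both $\ell[c_1,b]$ and $\ell[d_1,b]$ drop strictly below $\ell[a,b]$ so the induction closes. One small remark worth keeping in mind: your key lemma uses only modularity, not the full strength of distributivity (you note this yourself via the transposition isomorphism $[a\wedge b,b]\cong[a,a\vee b]$), so the argument in fact proves the Jordan--H\"older theorem for modular — indeed upper semimodular — lattices of finite length, which is the level of generality at which Gr\"atzer's Theorem~1 is stated; the distributive case the paper needs is a specialization.
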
 

\begin{proposition}\label{5.51}  If $R\subseteq S$ has FCP, then  $T\in [R,S]$ is $\cap$-irreducible (resp$.$ $\Pi$-irreducible) if and only if either $T=S$ (resp$.$ $T=R$) or there is a unique $T' \in[R,S]$ such that $T\subset T'$ (resp$.$ $T'\subset T$) is  minimal.  
\end{proposition}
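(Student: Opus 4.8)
The plan is to prove each direction of the equivalence for the $\cap$-irreducible case; the $\Pi$-irreducible case then follows by the order-reversing symmetry between $\cap$ and $\Pi$ (or, more precisely, by applying the $\cap$-statement to the opposite lattice, which is again the intermediate lattice of an FCP extension in the lattice-theoretic sense). So I would concentrate on the first assertion and remark that the second is dual.

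For the easy direction, suppose $T \neq S$ and there is a unique $T' \in [R,S]$ with $T \subset T'$ minimal. I claim $T$ is $\cap$-irreducible. Suppose $T = T_1 \cap T_2$ with $T \subsetneq T_1$ and $T \subsetneq T_2$; I want a contradiction. Since $T \subsetneq T_i$, each chain from $T$ up to $T_i$ begins with a minimal extension $T \subset U_i$ with $U_i \subseteq T_i$; by uniqueness, $U_1 = U_2 = T'$, so $T' \subseteq T_1 \cap T_2 = T$, contradicting $T \subsetneq T'$. Hence one of the $T_i$ equals $T$, proving $\cap$-irreducibility. (Here I use that in an FCP extension every proper inclusion $T \subsetneq U$ can be refined so that its bottom step $T \subset U_0 \subseteq U$ is minimal — this is the juxtaposition-of-minimal-extensions fact recalled in the introduction, applied to the FCP extension $T \subseteq U$.)

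For the converse, suppose $T$ is $\cap$-irreducible and $T \neq S$. I must produce a $T'$ with $T \subset T'$ minimal and show it is the only one. Existence is immediate: since $T \subsetneq S$, pick any minimal extension $T \subset T'$ inside $[T,S]$, which exists by the FCP/refinement fact. For uniqueness, suppose $T \subset T'$ and $T \subset T''$ are both minimal with $T' \neq T''$. Then $T' \cap T''$ is an intermediate algebra between $T$ and $T'$; since $T \subset T'$ is minimal, $T' \cap T''$ is either $T$ or $T'$. It cannot be $T'$, for that would give $T' \subseteq T''$, and then minimality of $T \subset T''$ forces $T' = T''$, a contradiction. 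So $T' \cap T'' = T$. But $T' \neq T$ and $T'' \neq T$, so this exhibits $T$ as a nontrivial intersection, contradicting $\cap$-irreducibility. Hence the minimal overextension of $T$ is unique.

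The only point that needs a little care — and the one I'd flag as the main obstacle, though it is really a matter of citing the right recalled fact rather than a genuine difficulty — is the repeated use of ``every proper FCP inclusion has a minimal bottom step.'' This is exactly what the introduction records: an FCP extension $T \subseteq U$ admits a maximal finite chain $T = R_0 \subset R_1 \subset \cdots \subset R_n = U$ realized as a juxtaposition of minimal extensions, so $R_0 \subset R_1$ is minimal with $R_1 \subseteq U$. With that in hand, both directions are short. One should also note the degenerate cases are handled correctly: $T = S$ is vacuously $\cap$-irreducible and has no proper overextension, matching the ``either $T = S$'' clause; and dually $T = R$ for the $\Pi$-irreducible statement.
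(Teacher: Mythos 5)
Your proof is correct and supplies precisely the details behind the paper's one-word proof (``Obvious''): you use FCP to produce a minimal first step of any proper inclusion, and minimality to force the uniqueness/contradiction, which is the natural argument. One small caveat: the opposite poset $[R,S]^{\mathrm{op}}$ need not literally be the intermediate lattice of some ring extension, but your duality appeal is still sound because the argument is purely lattice-theoretic and the relevant hypotheses (completeness, both chain conditions, hence existence of covers/cocovers) are self-dual.
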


\begin{proof} Obvious.
\end{proof} 

\begin{definition}\label{4.2} A ring extension $R\subseteq S$ is called 
{\it arithmetic} if $[R_P, S_P]$ is a chain for each $P\in\mathrm{Spec}(R)$.

An arithmetic extension is distributive by \cite[Proposition 5.18]{Pic 4}.
\end{definition}

In the next proposition, we need the following definition: 
A ring extension $R\subset S$ is called {\it quadratic} if each $t\in S$ is a zero of a monic quadratic polynomial over $R$ (\cite[definition page 430]{HP}). 

\begin{proposition}\label{4.3}  Let $R\subset S$ be a ring extension and let $T\in ]R,S]$. 

 \begin{enumerate}
 \item If $T/R$ is an essential $R$-submodule of $S/R$, then $T$ is an essential $R$-subalgebra of $S$.
 
 \item If  in addition, $R\subset S$ is quadratic, then $T$ is an essential $R$-subalgebra of $S$ if and only if $T/R$ is an essential $R$-submodule of $S/R$.  
  \end{enumerate}
\end{proposition}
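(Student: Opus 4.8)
For part (1), the point is that being essential as an $R$-subalgebra is a weaker requirement than being essential as an $R$-submodule, because the class of subobjects one must meet is smaller: if $T$ is essential as a submodule, then $(T/R)\cap(U/R)\neq 0$ for \emph{every} nonzero $R$-submodule $U/R$ of $S/R$, in particular for every $U\in\,]R,S]$. But $(T/R)\cap(U/R)=(T\cap U)/R$, so $T\cap U\neq R$ for all $U\in\,]R,S]$, which is exactly the definition of $T$ being an essential $R$-subalgebra. So part (1) is essentially a tautology once one identifies the intersection of submodules $T/R$ and $U/R$ with $(T\cap U)/R$; I would state this identification and conclude. This also matches the sentence already made in item (3) of the lattice recollections.

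For part (2), only the converse needs work: assume $R\subset S$ is quadratic and $T$ is an essential $R$-subalgebra; I must produce, from an arbitrary nonzero $R$-submodule $N/R$ of $S/R$ (equivalently an $R$-submodule $N$ with $R\subsetneq N\subseteq S$, not necessarily a subring), a nonzero element of $(T/R)\cap(N/R)$. The plan is to pass from the mere submodule $N$ to a genuine $R$-subalgebra that it contains and that is still $\neq R$. Pick $x\in N\setminus R$; by the quadratic hypothesis $x$ satisfies $x^2=ax+b$ with $a,b\in R$, so $R[x]=R+Rx$ as an $R$-module. Hence $R[x]\in\,]R,S]$ is a subalgebra with $R[x]\subseteq R+N=N$ (since $Rx\subseteq N$ and $R\subseteq N$), actually $R[x]/R = Rx \subseteq N/R$. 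Essentiality of $T$ as a subalgebra gives $T\cap R[x]\neq R$, so $(T/R)\cap (R[x]/R)\neq 0$, and since $R[x]/R\subseteq N/R$ we get $(T/R)\cap(N/R)\neq 0$. As $N/R$ was an arbitrary nonzero submodule of $S/R$, this shows $T/R$ is an essential submodule, completing the converse; combined with part (1) this gives the equivalence.

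The main subtlety — really the only place the quadratic hypothesis is used — is guaranteeing that for $x\notin R$ the subalgebra $R[x]$ is \emph{finitely generated as an $R$-module and contained in $N$}; without a degree bound, $R[x]$ could involve $x^2, x^3,\dots$ which need not lie in the $R$-module $N$. Quadraticity forces $R[x]=R+Rx$, which both keeps $R[x]$ inside $N$ and, incidentally, guarantees $R[x]\neq R$ (as $x\notin R$). One should double-check the degenerate possibility that $R[x]=R$ cannot occur: if $R[x]=R$ then $x\in R$, contradicting the choice of $x$, so $R[x]\in\,]R,S]$ genuinely. I would also remark that this argument does not need $R\subseteq S$ to have FCP; it is purely module-theoretic once the quadratic relation is in hand.
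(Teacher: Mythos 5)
Your proof is correct and follows essentially the same route as the paper: part (1) is immediate from the identification $(T/R)\cap(U/R)=(T\cap U)/R$, and for part (2) the paper likewise picks $t\in N\setminus R$, uses the quadratic hypothesis to get $R[t]=R+Rt\subseteq N$, and applies subalgebra-essentiality of $T$ to $R[t]$. Your extra commentary on why the degree bound is the crux is accurate but not a departure from the paper's argument.
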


\begin{proof} (1) Obvious.

(2) One part is (1). Assume that $R\subset S$ is quadratic and that  $T$ is an essential $R$-subalgebra of $S$. Let $N'$ be a nonzero $R$-submodule of $S/R$. There exists an $R$-submodule $N$ of $S$ containing $R$ such that $N'=N/R$. Let $t\in N\setminus\{R\}$. Then, $R\subset R[t]=R+Rt\subseteq N$ because $t$ satisfies a monic quadratic polynomial over $R$.  It follows that $R\neq T\cap R[t]\subseteq T\cap N$, which shows that $0\neq (T/R)\cap  (N/R)=(T/R)\cap N'$ and $T/R$ is an essential $R$-submodule of $S/R$.  
\end{proof} 

\begin{remark}\label{4.4} There exist ring extensions such that the equivalence of (2) in Proposition  \ref{4.3} does not hold. Let $k\subset L$ be a radicial FIP extension of degree $p^2$, where $\mathrm{c}(k)=p$. There exists a unique $K\in[k,L]$ such that $[K:k]=p$ because $[k,L]$ is a chain, so that $[k,L]=\{k,K,L\}$. Then, $K$ and $L$ are both essential $k$-subalgebras of $S$. But $K/k$ is not an essential $k$-vector subspace of $L/k$. Indeed, $[L:k]=p^2$ shows that there exists a basis $\{x_1,\ldots,x_{p^2}\}$ of the $k$-vector space $L$ such that $\{x_1,\ldots,x_p\}$ is a basis of the $k$-vector space $K$. Let $V:=k+kx_{p+1}$. Then, $(V/k)\cap (K/k)=0$ although $V/k\neq 0$.
\end{remark} 

   \section {The Loewy series of an FCP (distributive) extension}

We first note that a distributive  FCP extension  $R\subset S$  has FIP (\cite[Theorem 4.28]{R}). In this section, we  associate a chain in $[R,S]$ to the lattice $[R,S]$, called the  Loewy series of $[R,S]$.

\begin{definition}\label{8.1} \cite[Definitions pages 47, 51 and 77]{Cal}, \cite[page 29]{NO} Let $R\subset S$ be an FCP extension, $\mathcal A$ the set of atoms of $[R,S]$, $\mathcal {CA}$  the set of co-atoms and $\mathcal E$ the set of essential elements.

 \begin{enumerate}
 \item   The {\it socle} of the extension $R\subset S$ is  $\mathcal S[R,S]:=\prod_{A\in \mathcal A}A$. 
  
\item The {\it radical} of the extension $R\subset S$ is  $\mathcal R[R,S]:=\cap_{A\in \mathcal {CA}}A$.
 
 \item The {\it Loewy series} of the extension $R\subset S$ is the chain $\{S_i\}_{i=0}^n$ defined by induction as follows: $S_0:=R,\ S_1:=\mathcal S[R,S]$ and for each $i\geq 0$ such that $S_i\neq S$, we set $S_{i+1}:=\mathcal S[S_i,S]$.
 
  \item The {\it Loewy length} $\pounds[R,S]$ of the extension $R\subset S$ is the least integer $n$ such that $S=S_n$. Of course, since $R\subset S$ has FCP, there is some integer $n$ such that $S_n=S_{n+1}=S$. 
 \end{enumerate}
\end{definition}

 \begin{proposition} \label{8.22} Let $R\subset S$ be an FCP extension. Then, $\pounds[R,S]\leq \ell[R,S]\leq {\mathrm L}_R(S/R)$.
\end{proposition}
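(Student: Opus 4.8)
The plan is to establish the two inequalities $\pounds[R,S]\le\ell[R,S]$ and $\ell[R,S]\le{\mathrm L}_R(S/R)$ separately, the second being essentially a length-comparison between the lattice $[R,S]$ and the module $S/R$.

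For the inequality $\ell[R,S]\le{\mathrm L}_R(S/R)$, I would argue that any maximal chain $R=R_0\subset R_1\subset\cdots\subset R_m=S$ in $[R,S]$ consists of minimal extensions $R_i\subset R_{i+1}$, by the key fact recalled in the introduction. Each such minimal extension is in particular a proper inclusion of $R$-modules, so it gives a strictly increasing chain of $R$-submodules $R=R_0\subsetneq R_1\subsetneq\cdots\subsetneq R_m=S$; passing to the quotient by $R$ yields a strictly increasing chain of $R$-submodules of $S/R$ of length $m$. Hence $m\le{\mathrm L}_R(S/R)$ whenever the latter is finite (and when it is infinite there is nothing to prove). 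Taking the supremum over all maximal chains — which by \cite[Theorem 4.11]{DPP3} is attained — gives $\ell[R,S]\le{\mathrm L}_R(S/R)$. Here one should note that an FCP extension need not be integral, but it is finitely generated; if ${\mathrm L}_R(S/R)=\infty$ the bound is vacuous, so the statement is still correct as written.

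For $\pounds[R,S]\le\ell[R,S]$, the idea is that the Loewy series $S_0=R\subset S_1\subset\cdots\subset S_n=S$ is itself a chain in $[R,S]$, and at each step $S_i\subsetneq S_{i+1}$ the inclusion is \emph{strict} precisely until $S_i=S$: indeed, by Definition \ref{8.1}(3), $S_{i+1}=\mathcal S[S_i,S]$ is the product of all atoms of $[S_i,S]$, and as long as $S_i\ne S$ the extension $S_i\subset S$ is an FCP extension with at least one atom (any maximal chain from $S_i$ starts with a minimal extension), so $S_i\subsetneq S_{i+1}$. Thus the Loewy series is a strictly increasing chain from $R$ to $S$ of length $n=\pounds[R,S]$, and since $\ell[R,S]$ is the supremum of lengths of such chains, $\pounds[R,S]\le\ell[R,S]$.

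I expect the only subtle point — and hence the main obstacle — to be the bookkeeping at the top of the Loewy series: one must check that $S_{i+1}\ne S_i$ whenever $S_i\ne S$, which requires knowing that an FCP extension that is not trivial has at least one atom, and that $\mathcal S[S_i,S]$, being a product of atoms, strictly contains $S_i$. Both facts are immediate from the structure of FCP extensions as juxtapositions of minimal extensions, so the argument is short; the rest is routine.
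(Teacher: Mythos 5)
Your proof is correct and takes essentially the same approach as the paper: for $\pounds\le\ell$ both observe that the Loewy series is a (strictly increasing) chain in $[R,S]$, and for $\ell\le\mathrm{L}_R(S/R)$ both pass from a maximal chain of $R$-subalgebras, realized by \cite[Theorem 4.11]{DPP3}, to a chain of $R$-submodules of $S/R$. The paper invokes additivity of length along the chain (Northcott) where you compare chain lengths directly, but this is a cosmetic difference, and your explicit handling of the case $\mathrm{L}_R(S/R)=\infty$ is a welcome precision.
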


\begin{proof} Since the Loewy series $\{S_i\}_{i=0}^n$ of $R\subset S$ is a chain, which is not necessarily  maximal, we have $\ell[R,S]\geq\sum_{i=0}^{n-1}\ell[S_i,S{i+1}]\geq n=\pounds[R,S]$. Now, it exists $\{R_j\}_{i=0}^m$, a maximal chain such that $m=\ell[R,S]$  \cite[Theorem 4.11]
{DPP3}. Then, ${\mathrm L}_R(S/R)=\sum_{j=0}^{m-1}{\mathrm L}_R[R_{j+1}/R_j]$ (\cite[Theorem 6, page 20]{N}), with ${\mathrm L}_R[R_{j+1}/R_j]\geq 1$ for each $j\in\{0,\ldots,m\}$, so that ${\mathrm L}_R(S/R)\geq \ell[R,S]$. 
\end{proof} 

\begin{proposition} \label{8.111} Let $R\subset S$ be an FCP extension. Then, $\mathcal S[R,S]=\cap_{E\in\mathcal E}E$ and is the least element of $\mathcal E$.
\end{proposition}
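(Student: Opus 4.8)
The plan is to show two things: first, that $\mathcal{S}[R,S] = \prod_{A \in \mathcal{A}} A$ is itself essential; second, that every essential element $E \in \mathcal{E}$ contains $\mathcal{S}[R,S]$. Together these give that $\mathcal{S}[R,S]$ is the least element of $\mathcal{E}$, hence also equals $\cap_{E \in \mathcal{E}} E$.

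For the first point, I would argue that a product of atoms is essential. Since $R \subset S$ has FCP, $[R,S]$ is Artinian, so every $U \in\, ]R,S]$ contains at least one atom $A$ (take a minimal element of $]R,U]$, which gives a minimal extension $R \subset A \subseteq U$). Then $A \subseteq U \cap \mathcal{S}[R,S]$, and since $A \neq R$ we get $U \cap \mathcal{S}[R,S] \neq R$; thus $\mathcal{S}[R,S]$ is essential (note $\mathcal{S}[R,S] \neq R$ as long as $\mathcal{A} \neq \emptyset$, which holds because $R \subsetneq S$ and $[R,S]$ is Artinian). For the second point, let $E$ be essential and let $A$ be any atom. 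Then $A \cap E \neq R$ by definition of essential; since $R \subset A$ is minimal and $R \subseteq A \cap E \subseteq A$, we must have $A \cap E = A$, i.e. $A \subseteq E$. As this holds for every atom, $\mathcal{S}[R,S] = \prod_{A \in \mathcal{A}} A \subseteq E$.

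Finally, combining: $\mathcal{S}[R,S] \in \mathcal{E}$ and $\mathcal{S}[R,S] \subseteq E$ for all $E \in \mathcal{E}$, so $\mathcal{S}[R,S]$ is the least element of $\mathcal{E}$ and in particular $\mathcal{S}[R,S] = \cap_{E \in \mathcal{E}} E$ (the intersection is attained at $\mathcal{S}[R,S]$ itself, which lies in $\mathcal{E}$). One should also dispose of a degenerate case: if $\mathcal{A} = \emptyset$ then $R = S$, and both sides of the asserted equality reduce to $R = S$, so the statement holds vacuously.

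The step requiring the most care is the essentiality of $\mathcal{S}[R,S]$: one must be sure that "every $U \in\, ]R,S]$ contains an atom" really follows from the Artinian hypothesis, and that "$A \subseteq U$ and $A$ an atom" genuinely forces $U \cap \mathcal{S}[R,S] \neq R$ — here the only subtlety is that $\mathcal{S}[R,S]$ is a product (compositum) of possibly many atoms, but since it contains each individual atom $A$, the single atom $A \subseteq U$ lies in the intersection, which settles it. Everything else is a direct unwinding of the definition of essential element together with minimality of the extensions $R \subset A$.
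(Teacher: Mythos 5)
Your proof is correct and follows essentially the same argument as the paper: you show that every essential element contains every atom (using minimality of $R\subset A$ to force $A\cap E=A$), hence contains $\mathcal S[R,S]$, and you show $\mathcal S[R,S]$ is itself essential by noting that any $U\in\,]R,S]$ contains some atom via the Artinian property. The paper presents the same two steps in the reverse order; your added remark about the degenerate case $\mathcal A=\emptyset$ is harmless but unnecessary since the hypothesis $R\subset S$ is a proper inclusion.
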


\begin{proof} By definition, $\mathcal S[R,S]:=\prod_{A\in \mathcal A}A$. Let $T\in  \mathcal E$ and  
$A\in \mathcal A$. Then, $R\subset T\cap A\subseteq A$ shows that $T\cap A=A$  since $R\subset A$ is minimal,  which leads to $A\subseteq T$, so that any element of $\mathcal A$ is contained in any element of $\mathcal E$. In particular, $\mathcal S[R,S]\subseteq\cap_{E\in\mathcal E}E\ (*)$.  Let $U\in]R,S]$. Since $R\subset S$ has FCP, there exists some $A\in \mathcal A$ such that $A\subseteq U$, so that $A\cap U=A$. But $A\subseteq \mathcal S[R,S]$ shows that $R\subset A\subseteq U\cap \mathcal S[R,S]$. Then, $\mathcal S[R,S]$ is essential and $(*)$ gives the result.
\end{proof} 

We recall that  an $R$-module $N\neq 0$ is called {\it simple} if it is an atom in the lattice $\Lambda(N)$ of $R$-submodules of $N$, which is equivalent to $N=Rx$ for any nonzero $x\in N$ 
 \cite[Lemma 2.4.1]{NO}.
 In the following corollary, we  set $\mathcal {MS}[R,S]:=\sum\{N\in\Lambda(S)\mid R\subset N,\ N/R$ simple in $\Lambda (S/R) \}$. ($\mathcal {MS}[R,S]$ stands for module-socle). In view of \cite[page 52]{NO}, $\mathcal {MS}[R,S]$ is the intersection of the $R$-submodules $N$ of $S$ containing $R$ such that $N/R$ is essential in $S/R$.

\begin{corollary} \label{8.112} Let $R\subset S$ be an integral $M$-crucial FCP extension. Then  $\mathcal S[R,S]\subseteq\mathcal {MS}[R,S]$ with equality if $R\subset S$ is quadratic.
 \end{corollary}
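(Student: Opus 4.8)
The plan is to compare the algebra-socle $\mathcal S[R,S] = \prod_{A\in\mathcal A} A$ with the module-socle $\mathcal{MS}[R,S]$, using the two characterizations in play: $\mathcal S[R,S]$ is the least essential $R$-subalgebra (Proposition \ref{8.111}), while $\mathcal{MS}[R,S]$ is, by \cite[page 52]{NO}, the intersection of the $R$-submodules $N$ of $S$ with $R\subset N$ and $N/R$ essential in $S/R$. First I would observe that every atom $A\in\mathcal A$ gives a simple submodule $A/R$: since $R\subset S$ is integral and $M$-crucial, a minimal extension $R\subset A$ is integral, hence finite, and in fact one of the three types in Theorem \ref{minimal}; in each case $A/R$ is a simple $R$-module (it is killed by the maximal ideal $M=(R:A)$ and has length one — inert and decomposed give $[A/M : R/M]$-dimension considerations, ramified gives length one directly). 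Therefore each atom $A$ is one of the submodules $N$ appearing in the definition of $\mathcal{MS}[R,S]$, and so $A \subseteq \mathcal{MS}[R,S]$ for every $A\in\mathcal A$; taking the product (= compositum) over all atoms yields $\mathcal S[R,S] \subseteq \mathcal{MS}[R,S]$. A cleaner alternative for the inclusion: by Proposition \ref{4.3}(1) every submodule $N$ with $N/R$ essential gives an essential subalgebra, so by Proposition \ref{8.111} we get $\mathcal S[R,S]\subseteq N$ for all such $N$, whence $\mathcal S[R,S]\subseteq \bigcap N = \mathcal{MS}[R,S]$; this avoids invoking the structure theorem at all, so I would present it this way.

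For the equality under the quadratic hypothesis, the idea is that quadraticity forces the notions of essential subalgebra and essential submodule to coincide, which is exactly Proposition \ref{4.3}(2). Concretely: let $N$ be an $R$-submodule of $S$ with $R\subset N$ and $N/R$ essential in $S/R$; I want to produce an essential subalgebra inside $\mathcal S[R,S]$ sitting below (something comparable to) $N$, or rather, I want to show $\mathcal{MS}[R,S]\subseteq \mathcal S[R,S]$. Since $R\subset S$ is quadratic, any single element $t\in N\setminus R$ generates $R[t]=R+Rt\subseteq N$, an $R$-subalgebra with $R\subset R[t]$; by the FCP hypothesis $R[t]$ contains an atom $A\in\mathcal A$, so $A\subseteq R[t]\subseteq N$. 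Thus every $N$ in the defining family for $\mathcal{MS}[R,S]$ contains at least one atom. But more is needed: I must show $\mathcal{MS}[R,S]$, the intersection of all such $N$, lies inside the compositum of all atoms. The right route is to show that $\mathcal S[R,S]$ itself is one of the $N$'s, i.e. that $\mathcal S[R,S]/R$ is an essential submodule of $S/R$; this follows because $\mathcal S[R,S]$ is an essential subalgebra (Proposition \ref{8.111}) and, $R\subset S$ being quadratic, Proposition \ref{4.3}(2) upgrades this to: $\mathcal S[R,S]/R$ is an essential $R$-submodule of $S/R$. Hence $\mathcal S[R,S]$ appears in the intersection defining $\mathcal{MS}[R,S]$, giving $\mathcal{MS}[R,S]\subseteq \mathcal S[R,S]$, and combined with the first inclusion, equality.

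The main obstacle I anticipate is the equality direction, specifically making sure that Proposition \ref{4.3}(2) applies to the submodule $\mathcal S[R,S]/R$ — one needs $\mathcal S[R,S]\in\,]R,S]$, which holds as soon as $R\subsetneq S$ (the extension is nontrivial and $M$-crucial, so $\mathcal A\neq\emptyset$ and $\mathcal S[R,S]\neq R$), and one needs the quadratic hypothesis to pass from "essential subalgebra" to "essential submodule", which is precisely the content of \ref{4.3}(2). A subtlety worth a line in the write-up: Remark \ref{4.4} shows the two essentiality notions genuinely differ without quadraticity (and there $\mathcal S[k,L]=K=\mathcal{MS}$-discrepancy is visible), so the quadratic hypothesis cannot simply be dropped; the inclusion $\subseteq$, by contrast, is unconditional since it rests only on \ref{4.3}(1). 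I would also double-check the integral $M$-crucial hypotheses are used only to guarantee that atoms are finite minimal extensions of the three types (so that $A/R$ is a genuine — in particular nonzero, cyclic — simple module), which is what makes "$A$ is an essential submodule candidate" meaningful; this is automatic here by Theorems \ref{crucial} and \ref{minimal}.
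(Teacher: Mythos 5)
Your argument for the reverse inclusion $\mathcal{MS}[R,S]\subseteq\mathcal S[R,S]$ under the quadratic hypothesis is correct and takes a genuinely different (and cleaner) route than the paper: you show $\mathcal S[R,S]$ is itself one of the essential $N$'s in the intersection characterization of $\mathcal{MS}[R,S]$, via Propositions \ref{8.111} and \ref{4.3}(2), whereas the paper argues from the sum characterization that every $N$ with $N/R$ simple is forced by quadraticity to lie in $\mathcal A$.

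The forward inclusion $\mathcal S[R,S]\subseteq\mathcal{MS}[R,S]$, however, has genuine gaps. First, the claim that ``$A/R$ is a simple $R$-module'' fails in the inert case: there $R/M\to A/M$ is a minimal field extension whose degree can exceed $2$, so ${\mathrm L}_R(A/R)=[A/M:R/M]-1$ can exceed $1$. What is true is only that $A/R$ is an $R/M$-vector space, hence semisimple, which does give $A\subseteq\mathcal{MS}[R,S]$ -- but not by exhibiting $A$ as one of the $N$'s in the defining sum. Second, and more seriously, the step ``taking the product (= compositum) over all atoms yields $\mathcal S[R,S]\subseteq\mathcal{MS}[R,S]$'' implicitly assumes $\mathcal{MS}[R,S]$ is closed under multiplication, which is not automatic: it is a priori only an $R$-submodule of $S$. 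The observation you are missing is that $M=(R:A)$ is a \emph{common ideal} of $R$ and each atom $A$, so $MA\subseteq M$ and not merely $MA\subseteq R$; this makes $M$ a common ideal of $R$ and the ring $\mathcal S[R,S]=\prod_{A\in\mathcal A}A$, after which the paper argues element-wise -- every $x\in\mathcal S[R,S]$ satisfies $Mx\subseteq R$, so $(R+Rx)/R$ is a one-dimensional $R/M$-vector space, hence a simple $R$-module, placing $x$ in $\mathcal{MS}[R,S]$. Your ``cleaner alternative'' has a third gap: Proposition \ref{4.3}(1) applies to $T\in]R,S]$, i.e.\ to $R$-\emph{subalgebras}, whereas the $N$'s in the intersection description of $\mathcal{MS}[R,S]$ are arbitrary $R$-submodules containing $R$, so you cannot feed them into \ref{4.3}(1) and then invoke \ref{8.111} to conclude $\mathcal S[R,S]\subseteq N$.
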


\begin{proof} Set $k:=R/M$. Let $A\in\mathcal A$. Since $R\subset A$ is minimal, we have $M=(R:A)\in \mathrm{Max}(R)$, so that $M(\mathcal S[R,S])\subseteq R$. Let $x\in\mathcal S[R,S]$ and set $N:=R+Rx\subseteq R[x]$, which gives $MN\subseteq R$. Let $\overline x$ be the class of $x$ in $N/R$. Then, $N/R=R\overline x$. But $N/R$ is also a one-dimensional $k$-vector space generated by $\overline x$.  It follows by \cite[Corollary 2, page 66]{N} that ${\mathrm L}_R(N/R)={\mathrm L}_k(N/R)=\dim_k(N/R)=1$ showing that $N/R$ is a simple $R$-submodule of $S/R$. Then, $x\in \mathcal {MS}[R,S]$, which leads to $\mathcal S[R,S]\subseteq \mathcal {MS}[R,S]$.

Assume in addition that $R\subset S$ is  quadratic. Let $x\in S$ be such that $N:=R+Rx$ satisfies the following: $N/R$ is a simple $R$-submodule of $S/R$. Since $R\subset S$ is quadratic, it follows that $N\in[R,S]$. Moreover, the fact that $N/R$ is a simple $R$-submodule of $S/R$ shows that there is no $R$-submodule, and a fortiori, no $R$-subalgebra of $S$ strictly contained between $R$ and $N$, so that $N\in\mathcal A\subseteq \mathcal S[R,S]$. This property holding for any $R$-submodule $N$ of $S$ containing $R$ such that $N/R$ is a simple $R$-submodule, we get that $ \mathcal {MS}[R,S]
\subseteq \mathcal S[R,S]$, with  equality because of the first part.
\end{proof} 

 \begin{remark}\label{8.113} We use the example of Remark  \ref{4.4} to show that, in general, the two socles of Corollary  \ref{8.112} do not coincide.   Let $k\subset L$ be a radicial FIP extension of degree $p^2$, where $\mathrm{c}(k)=p$. There exists a unique $K\in[k,L]$ such that $[K:k]=p$, so that $K$ is the unique atom of $[k,L]$, giving that $K=\mathcal S[k,L]$.  But $[L:k]=p^2$ shows that there exists a basis $\{x_1:=1,\ldots,x_{p^2}\}$ of the $k$-vector space $L$, with $L=\sum_{i=1}^{p^2}kx_i$. For $i>1$, set $V_i:=k+kx_i$. Then, each $V_i/k$ is a simple subspace of $L/k$ and $L=\mathcal {MS}[k,L]\neq \mathcal {S}[R,S]=K$.
\end{remark} 
 
\begin{lemma} \label{8.2} Let $R\subset S$ be a distributive FCP extension (hence FIP) and let $S_1$ be its socle. Then, $S_1=\sup\{T\in[R,S]\mid R\subset T$  Boolean$\}$. 
\end{lemma}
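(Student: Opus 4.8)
The plan is to show both inclusions. For the inclusion $\sup\{T\in[R,S]\mid R\subset T \text{ Boolean}\}\subseteq S_1$, I would first observe that every atom $A\in\mathcal A$ is itself a Boolean extension of $R$ (the lattice $[R,A]=\{R,A\}$ is trivially Boolean), so the supremum on the right is at least $S_1=\prod_{A\in\mathcal A}A$. The real content is the reverse direction: if $R\subset T$ is Boolean, then $T\subseteq S_1$. Here I would use that a finite Boolean lattice is atomistic, i.e. every element is the product (join) of the atoms below it; in particular $T$ itself is the product of the atoms of $[R,T]$. So it suffices to check that each atom $A'$ of $[R,T]$ is an atom of $[R,S]$. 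This is immediate: $R\subset A'$ being minimal in $[R,T]$ means $[R,A']=\{R,A'\}$, and since $[R,A']$ doesn't depend on the ambient ring, $A'$ is also an atom of $[R,S]$, hence $A'\subseteq S_1$. Taking the product over all atoms of $[R,T]$ gives $T\subseteq S_1$.

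For the reverse inclusion $S_1\subseteq\sup\{T\mid R\subset T\text{ Boolean}\}$, the cleanest route is to show directly that $R\subset S_1$ is itself Boolean, so that $S_1$ appears as one of the $T$'s in the supremum. Since $R\subset S$ is distributive and $[R,S_1]$ is a sublattice (an interval) of $[R,S]$, the extension $R\subset S_1$ is again distributive (hence FIP, and catenarian by Proposition \ref{1.0}). By Definition (7) it remains to produce, for each $U\in[R,S_1]$, a complement. The natural candidate is $U':=\prod\{A\in\mathcal A\mid A\not\subseteq U\}$. One checks $UU'=S_1$ because $S_1$ is the product of all atoms and each atom is either $\subseteq U$ or a factor of $U'$; and $U\cap U'=R$ using distributivity together with the fact that for an atom $A\not\subseteq U$ one has $U\cap A=R$ (as $R\subset A$ is minimal). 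Expanding $U\cap U'=U\cap\prod_{A\not\subseteq U}A$ by distributivity of $\cap$ over $\Pi$ reduces this to a product of terms $U\cap A=R$, giving $U\cap U'=R$. Uniqueness of the complement follows from distributivity in the standard lattice-theoretic way. Hence $R\subset S_1$ is Boolean and $S_1$ lies in the set on the right, so $S_1\leq\sup\{T\mid R\subset T\text{ Boolean}\}$.

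The main obstacle I anticipate is the verification that $U\cap U'=R$: one must be careful that distributivity as stated (Definition (5), distributivity of $\cap$ over $\Pi$) applies to the possibly-infinite... actually finite, since distributive FCP implies FIP, products occurring here, and that the reduction $U\cap\prod_{A\not\subseteq U}A=\prod_{A\not\subseteq U}(U\cap A)$ is legitimate — this is finite distributivity, which is fine. A secondary subtlety is confirming $UU'=S_1$: every atom of $[R,S]$ below $S_1$ is by construction a factor of either $U$ or $U'$, and since $S_1$ is generated as a product by exactly these atoms (again using that $[R,S_1]$, being distributive of finite length, is atomistic), the product $UU'$ recovers all of them and hence equals $S_1$. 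The atomistic property of finite distributive lattices — that every element is the join of the atoms beneath it — is the one structural fact I would want to either cite from \cite{NO} or verify quickly, and it is the linchpin of both inclusions.
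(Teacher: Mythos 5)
Your proof is correct and follows essentially the same two-step strategy as the paper: show that $R\subset S_1$ is Boolean (so $S_1$ belongs to the set being supremed), and show that any Boolean $T$ is a product of atoms and hence sits below $S_1$. The only difference is presentational: where the paper cites \cite[page 292]{St} for the fact that a distributive interval generated by its atoms is Boolean, and \cite[Theorem~3.1]{Pic 10} for the fact that a Boolean extension is a product of atoms, you spell out the explicit complement $U':=\prod\{A\in\mathcal A\mid A\not\subseteq U\}$ and the distributivity computation $U\cap U'=\prod_{A\not\subseteq U}(U\cap A)=R$, and you invoke the standard atomistic property of finite Boolean lattices directly. Both are legitimate; your version is more self-contained.
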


\begin{proof} $R\subset S_1$ is Boolean because distributive and  $S_1$ is a product of atoms of $R\subset S_1$ (\cite[page 292]{St}). Let $T\in[R,S]$ be such that $R\subset T$ is Boolean. It follows from 
 \cite[Theorem 3.1]{Pic 10} that
$T=\prod_{A\in \mathcal B}A$, where   $\mathcal B\subseteq \mathcal A$, and then  $T\subseteq S_1$.
\end{proof}

\begin{corollary} \label{8.21} Let $R\subset S$ be a distributive FCP extension (hence FIP).  Then $\mathcal S[R,T]=\mathcal S[R,S]\cap T$ holds for each $T\in ]R,S]$. 
\end{corollary}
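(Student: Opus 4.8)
The plan is to exploit the characterization of the socle supplied by Lemma \ref{8.2}, namely that for a distributive FCP extension the socle is the supremum of all intermediate algebras $V$ for which $R\subset V$ is Boolean. Fix $T\in\,]R,S]$. Since $[R,T]$ is a sublattice of the distributive FCP lattice $[R,S]$, the extension $R\subset T$ is again distributive and FCP (hence FIP), so Lemma \ref{8.2} applies to it as well: $\mathcal S[R,T]=\sup\{V\in[R,T]\mid R\subset V \text{ Boolean}\}$. The two sets over which we take suprema differ only by the constraint $V\subseteq T$, so the inclusion $\mathcal S[R,T]\subseteq \mathcal S[R,S]$ is immediate, and combined with the trivial $\mathcal S[R,T]\subseteq T$ this yields $\mathcal S[R,T]\subseteq \mathcal S[R,S]\cap T$ at once.

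For the reverse inclusion I would argue as follows. Write $S_1:=\mathcal S[R,S]$. By Lemma \ref{8.2}, $R\subset S_1$ is Boolean, and by \cite[Theorem 3.1]{Pic 10} every intermediate algebra of a Boolean extension is a product of atoms; in particular the lattice $[R,S_1]$ is itself Boolean, so every element of $[R,S_1]$ — and hence $S_1\cap T$, which lies in $[R,S_1]$ — has a complement in $[R,S_1]$ and the extension $R\subset S_1\cap T$ is Boolean (a nonempty ``interval'' $[R,S_1\cap T]$ inside a Boolean lattice is Boolean, being the principal downset of the element $S_1\cap T$). Since $S_1\cap T\subseteq T$, this exhibits $S_1\cap T$ as a Boolean intermediate algebra of $R\subset T$, whence $S_1\cap T\subseteq \sup\{V\in[R,T]\mid R\subset V \text{ Boolean}\}=\mathcal S[R,T]$ by Lemma \ref{8.2} applied to $R\subset T$. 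Combining the two inclusions gives $\mathcal S[R,T]=\mathcal S[R,S]\cap T$.

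The step I expect to be the main obstacle is the verification that $R\subset S_1\cap T$ is Boolean, i.e. that the downset $[R,S_1\cap T]$ of a Boolean lattice is again Boolean. The subtlety is that complements must be taken inside the smaller interval, not inside $[R,S_1]$: given $W\in[R,S_1\cap T]$ with complement $W'$ in $[R,S_1]$, the correct complement of $W$ in $[R,S_1\cap T]$ is $W'\cap(S_1\cap T)$, and one must check $W\cap\big(W'\cap(S_1\cap T)\big)=R$ (clear) and $W\cdot\big(W'\cap(S_1\cap T)\big)=S_1\cap T$; the latter uses distributivity of $[R,S_1]$ together with $W\subseteq S_1\cap T$. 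This is a standard fact about Boolean algebras, and invoking \cite[Exercise 5, page 33]{NO} or the elementary lattice argument just sketched disposes of it; alternatively one can cite \cite[Theorem 3.1]{Pic 10} directly to conclude that $S_1\cap T$, being an element of the Boolean lattice $[R,S_1]$, is a product of atoms of $R\subset S_1\subseteq S$, hence a product of atoms of $R\subset S$ lying below $T$, which makes $R\subset S_1\cap T$ Boolean.
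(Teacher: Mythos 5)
Your proof is correct and follows essentially the same route as the paper: both directions hinge on Lemma \ref{8.2}'s characterization of the socle via Boolean subextensions, and on the fact that $R\subset S_1\cap T$ is Boolean because $[R,S_1\cap T]$ is a downset of the Boolean lattice $[R,S_1]$ (the paper cites \cite[Proposition 3.11]{Pic 10} for this step, whereas you supply the equivalent elementary lattice argument). The paper handles the reverse inclusion by first noting $T'_1\neq R$ via an atom and then squeezing $T'_1\subseteq T_1\subseteq T\cap S_1=T'_1$, which is a minor repackaging of your two-inclusion presentation.
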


\begin{proof} Set $S_1:=\mathcal S[R,S],\ T_1:=\mathcal S[R,T]$ and 
$T'_1:=S_1\cap T$. Because $T\neq R$, there exists some $A\in\mathcal A$ such that $R\subset A\subseteq T$. So, $R\subset A\subseteq S_1\cap T=T'_1\subseteq S_1$. Since 
$R\subset S_1$ is Boolean, so is $R\subset T'_1$ by \cite[Proposition 3.11]{Pic 10}. Then, $T'_1\subseteq T_1$ in view of Lemma \ref{8.2}. But $R\subset T_1$ being also Boolean, it follows that $T_1\subseteq S_1$. From $T'_1\subseteq T_1\subseteq T\cap S_1=T'_1$ we infer that $T'_1=T_1$.
\end{proof} 

The Loewy series of a distributive FCP extension provides a  chain
of Boolean subextensions of this extension. 

\begin{proposition} \label{8.3} If  $R\subset S$ is a distributive FCP extension (hence FIP) and  $\{S_i\}_{i=0}^n$ 
 is
its Loewy series, then, $S_i\subset S_{i+1}$ is Boolean for each  $0\leq i \leq n-1$ whence, is either locally integral or locally Pr\"ufer.
Moreover, $\ell[R,S]=\sum_{i=0}^{n-1}\ell[S_i,S_{i+1}]$. 
\end{proposition}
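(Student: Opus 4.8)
The plan is to prove the three assertions in order: first that each successive quotient $S_i \subset S_{i+1}$ in the Loewy series is Boolean; then, granting this, that each such Boolean extension is either locally integral or locally Prüfer; and finally the additivity of lengths along the Loewy chain.

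For the first assertion, the key observation is that distributivity is inherited by all subextensions: if $R \subseteq S$ is distributive FCP then so is $S_i \subseteq S$ for every $i$, since $[S_i, S]$ is a sublattice of $[R,S]$ and the distributive laws restrict. By definition $S_{i+1} = \mathcal{S}[S_i, S]$ is the socle of the distributive FCP extension $S_i \subset S$ (assuming $S_i \neq S$). Now I would invoke Lemma~\ref{8.2} applied to the extension $S_i \subset S$: the socle $S_{i+1}$ is a product of atoms of $S_i \subset S$, the extension $S_i \subset S_{i+1}$ is distributive, and $S_{i+1}$ is a product (compositum) of atoms of $S_i \subset S_{i+1}$ — exactly the situation in the first sentence of the proof of Lemma~\ref{8.2}, which shows via \cite[page 292]{St} that a finite distributive lattice which is the join of its atoms is Boolean. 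Hence $S_i \subset S_{i+1}$ is Boolean for each $0 \le i \le n-1$.

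For the second assertion, I would appeal to the structural dichotomy for Boolean extensions established in \cite{Pic 10}: a Boolean extension is a compositum of atoms, each atom being a minimal extension, and by Theorem~\ref{crucial} a minimal extension is either integral (module-finite) or a flat epimorphism. The point is that within a single Boolean block the atoms cannot be of mixed type at a given prime — one localizes at a maximal ideal $M$ and uses that the crucial ideals of the atoms control the localization: $(S_i)_M \subset (S_{i+1})_M$ is then either a compositum of integral minimal extensions (hence integral) or a compositum of Prüfer minimal extensions (hence a normal pair, i.e. locally Prüfer). This is the kind of local analysis that Corollary~\ref{8.112} and the recalled facts about crucial ideals are designed to support; I would cite the relevant result of \cite{Pic 10} classifying Boolean extensions rather than re-derive it.

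Finally, for the length formula $\ell[R,S] = \sum_{i=0}^{n-1} \ell[S_i, S_{i+1}]$: one inequality, $\ell[R,S] \ge \sum \ell[S_i,S_{i+1}]$, is immediate since juxtaposing maximal chains in each $[S_i, S_{i+1}]$ yields a chain in $[R,S]$ — this is already noted in the proof of Proposition~\ref{8.22}. For the reverse inequality I would use that $R \subset S$ is distributive of finite length, hence catenarian by Proposition~\ref{1.0} (the Jordan–Hölder chain condition), so $\ell[R,S]$ equals the common length of every maximal chain; inserting the Loewy chain $\{S_i\}$ as a subchain of a maximal chain and using catenarity of each block $[S_i,S_{i+1}]$ (itself distributive, hence catenarian) gives $\ell[R,S] = \sum_{i=0}^{n-1}\ell[S_i,S_{i+1}]$. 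The main obstacle I anticipate is the second assertion: making precise why a Boolean block is globally of one type (integral or Prüfer) rather than having both features simultaneously, which ultimately rests on the decomposition of a Boolean extension into atoms together with the crucial-ideal bookkeeping from \cite{Pic 10}; the first and third assertions are essentially formal consequences of Lemma~\ref{8.2}, Proposition~\ref{1.0}, and the argument already given for Proposition~\ref{8.22}.
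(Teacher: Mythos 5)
Your proposal is correct and follows essentially the same route as the paper: Lemma~\ref{8.2} applied to $S_i\subset S$ gives that $S_i\subset S_{i+1}$ is Boolean, the locally-integral/locally-Pr\"ufer dichotomy is taken from \cite[Proposition 3.4 and Corollary 3.19]{Pic 10} (as you indicate you would do rather than re-derive), and the length formula follows from Proposition~\ref{1.0} (distributive of finite length implies catenarian). Your extra elaboration of the first and third steps is sound but does not change the argument's structure.
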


\begin{proof}  By Definition \ref{8.1} (3), for each $i\in\{0,\ldots,n-1\}$, $S_{i+1}:=\mathcal S[S_i,S]$ holds, so that $S_i\subset S_{i+1}$ is Boolean by  Lemma \ref{8.2}. Since $R\subset S$ has FCP, the chain stops for some positive integer $n$ such that $S_n=S$. Deny, then there is some $S'$ which is an atom of $[S_n, S]$, a contradiction. 

  For each $i\in\{0,\ldots,n-1\},\ S_i\subset S_{i+1}$ is either locally integral or locally Pr\"ufer \cite[Proposition 3.4 and Corollary 3.19]{Pic 10}.

Since $R\subset S$ is distributive, $\ell[R,S]=\sum_{i=0}^{n-1}\ell[S_i,S_{i+1}]$ by Proposition~\ref{1.0}. 
\end{proof}

\begin{corollary} \label{8.31} Let  $R\subset S$ be a distributive FCP extension (hence FIP) and  $\{S_i\}_{i=0}^n$ 
its Loewy series. Then, 
 \begin{enumerate}
 \item $\pounds[R,S]=\ell[R,S]$ if and only if $[R,S]$ is a chain. In this case, $[R,S]=\{S_i\}_{i=0}^n$.
 
  \item $\pounds[R,S]=1$ if and only if $R\subset S$ is Boolean.
  \end{enumerate}
\end{corollary}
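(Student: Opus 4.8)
The plan is to derive both statements from Proposition \ref{8.3}, which already tells us that $S_i\subset S_{i+1}$ is Boolean for each $i$ and that $\ell[R,S]=\sum_{i=0}^{n-1}\ell[S_i,S_{i+1}]$, together with the inequality $\pounds[R,S]\leq\ell[R,S]$ from Proposition \ref{8.22} (recalling $n=\pounds[R,S]$).

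For (1), first I would prove the easy direction: if $[R,S]$ is a chain, then every Boolean subextension $S_i\subset S_{i+1}$ is itself chained, and a chained Boolean extension is minimal (its lattice of intermediate algebras is simultaneously a chain and a Boolean algebra, hence has at most two elements), so $\ell[S_i,S_{i+1}]=1$ for each $i$; summing gives $\ell[R,S]=n=\pounds[R,S]$, and since the Loewy series is then a maximal chain in the chain $[R,S]$ we get $[R,S]=\{S_i\}_{i=0}^n$. For the converse, suppose $\pounds[R,S]=\ell[R,S]=n$. Since $\ell[R,S]=\sum_{i=0}^{n-1}\ell[S_i,S_{i+1}]$ with each summand $\geq 1$ and there are $n$ summands, each $\ell[S_i,S_{i+1}]=1$, i.e. each $S_i\subset S_{i+1}$ is minimal. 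Thus the Loewy series is a maximal chain of length $n=\ell[R,S]$. The remaining point — that this forces $[R,S]$ itself to be a chain — is where I expect the real work: one must rule out any "side" element $T\in[R,S]$ not lying on the Loewy chain. Here I would invoke distributivity and Corollary \ref{8.21}: for such a $T$, the relations $\mathcal S[S_i,S]\cap T$ interact with $T$ through the filtration, and comparing the Loewy series of $R\subset T$ (which by Proposition \ref{8.3} has length $\ell[R,T]<n$) against the induced filtration $\{S_i\cap T\}$ — using that each $S_i\cap T\subseteq S_{i+1}\cap T$ is Boolean (a subextension of the Boolean $S_i\subset S_{i+1}$, by \cite[Proposition 3.11]{Pic 10}) and minimal-or-trivial — shows the two filtrations of $T$ have different lengths unless $T$ already appears in $\{S_i\}$, a contradiction; hence $[R,S]=\{S_i\}_{i=0}^n$ is a chain.

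For (2), the direction ``$R\subset S$ Boolean $\Rightarrow\pounds[R,S]=1$'' is immediate from Lemma \ref{8.2}: if $R\subset S$ is Boolean then $S$ is a product of atoms, so $S_1=\mathcal S[R,S]=S$ and the Loewy length is $1$. Conversely, if $\pounds[R,S]=1$ then $S=S_1=\mathcal S[R,S]$, and Proposition \ref{8.3} (the case $n=1$) says exactly that $S_0\subset S_1$, i.e. $R\subset S$, is Boolean. So (2) is essentially a restatement and needs no new argument beyond citing those two results.

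The main obstacle is the final implication in (1): passing from "the Loewy series is a maximal chain of minimal steps" to "the whole lattice is that chain." Everything else is bookkeeping with the already-established structural facts, but this step genuinely uses distributivity in an essential way — without it a maximal chain need not exhaust the lattice — and the cleanest route is the comparison of Loewy lengths of $R\subset T$ with the filtration $\{S_i\cap T\}$ via Corollary \ref{8.21}.
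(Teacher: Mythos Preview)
Your argument for (2) and the easy direction of (1) are correct and match the paper. For the hard direction of (1), you correctly reduce (via Proposition~\ref{8.3}) to the situation where each $S_i\subset S_{i+1}$ is minimal, and you correctly identify the remaining difficulty: why does this force $[R,S]$ to be the chain $\{S_i\}$?

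However, your finishing argument has a gap. You claim the Loewy series of $R\subset T$ ``has length $\ell[R,T]$'', but that is exactly the statement being proved (applied to the subextension $R\subset T$), so invoking it here is circular; a priori one only knows $\pounds[R,T]\leq\ell[R,T]$. And the assertion that the two filtrations of $T$ ``have different lengths unless $T$ appears in $\{S_i\}$'' is not substantiated: after removing repetitions, the chain $\{S_i\cap T\}$ is a maximal chain from $R$ to $T$, hence of length $\ell[R,T]$ by catenarity, so there is no length discrepancy to exploit. The phrase ``a subextension of the Boolean $S_i\subset S_{i+1}$'' is also imprecise, since $S_i\cap T$ need not equal $S_i$.

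The paper's argument is more direct and does not use Corollary~\ref{8.21}. The key observation is that once $S_i\subset S_{i+1}$ is minimal, $S_{i+1}$ is the \emph{unique} atom of $[S_i,S]$ (because $S_{i+1}=\mathcal S[S_i,S]$ is by definition the product of all such atoms). One then proves by induction on $i$ that every $T\in[R,S]$ with $\ell[R,T]=i$ equals $S_i$: given such $T$ and any $T'$ with $T'\subset T$ minimal, catenarity (Proposition~\ref{1.0}, from distributivity) gives $\ell[R,T']=i-1$, so $T'=S_{i-1}$ by induction; hence $T$ is an atom of $[S_{i-1},S]$, and therefore $T=S_i$.

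Your ingredients can in fact be assembled into a valid proof along similar lines: since $S_1$ is the unique atom of $[R,S]$, any $T\neq R$ contains an atom and hence contains $S_1$; then repeat inside $[S_1,S]$. But this is essentially the same induction as the paper's, and the detour through Corollary~\ref{8.21} and a length comparison is not needed.
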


\begin{proof} Assume that $n=\ell[R,S]$. Proposition \ref{8.3} implies $n=\ell[R,S]\Leftrightarrow \ell[S_i,S_{i+1}]=1$ for each $i\in\{0,\ldots,n-1\}\Leftrightarrow S_i\subset S_{i+1}$ is minimal for each $i\in\{0,\ldots,n-1\}\Leftrightarrow S_i\subset S$ has only one atom (which is $S_{i+1}$) for each $i\in\{0,\ldots,n-1\}$. We show by induction on $i\in\{0,\ldots,n\}$ that if $T\in[R,S]$ is such that $\ell[R,T]=i$, then $T=S_i$. For $i=0$, obviously, $T=R=S_0$. Assume that the induction hypothesis holds for $i-1$; that is, $S_{i-1}$ is the only element $U$ of $[R,S]$ such that $\ell[R,U]=i-1$. Since $\ell[R,T]=i$ and $R\subset S$ is distributive and  FCP, any element $T'$ of $[R,S]$ such that $T'\subset T$ is minimal satisfies $\ell[R,T']=i-1$, so that $T'=S_{i-1}$ and $S_{i-1}\subset T$ is minimal, giving that $T$ is an atom of $[S_{i-1},S]$, that is $T=S_i$. The induction holds for any $i\in\{0,\ldots,n\}$, and $[R,S]=\{S_i\}_{i=0}^n$ is a chain. The converse is immediate using the fact that $S_i\subset S$ has only one atom (which is $S_{i+1}$).

Again, by Proposition \ref{8.3}, $n=1\Leftrightarrow S=S_1\Leftrightarrow R\subset S$ is Boolean.
\end{proof} 

\begin{example} \label{8.32} Let  $R\subset S$ be an  integrally closed FCP  extension such that $R$ is a local ring. Then \cite[Theorem 6.10]{DPP2} says that   $[R,S]$ is a chain,  therefore distributive \cite[Proposition 2.3]{Pic 10} and  
 Corollary  \ref{8.31} shows that $[R,S]=\{S_i\}_{i=0}^n$.
\end{example}  

 \begin{proposition} \label{8.311} Let $R\subset S$ be an  extension of length 2. The following conditions are equivalent: 
 \begin{enumerate}
 \item $[R,S]$ is a chain. 
 
\item  $|[R,S]|=3$.

\item $\pounds[R,S]=2$.
 \end{enumerate}
 If these conditions do not hold, then $\pounds[R,S]=1$.
\end{proposition}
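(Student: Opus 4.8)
The plan is to prove the cycle of implications $(1)\Rightarrow(2)\Rightarrow(3)\Rightarrow(1)$, together with the final assertion, by exploiting the fact that an extension of length $2$ is built from exactly two juxtaposed minimal extensions and that everything here is governed by the number of atoms. First I would record the elementary structural observation: since $\ell[R,S]=2$ and $R\subset S$ has FCP, every maximal chain from $R$ to $S$ has the form $R\subset T\subset S$ with both steps minimal, so $]R,S[$ consists precisely of the atoms of $[R,S]$ (equivalently the co-atoms), and $[R,S]=\{R,S\}\cup\mathcal A$. Hence $|[R,S]|=|\mathcal A|+2$, and $[R,S]$ is a chain if and only if $|\mathcal A|=1$; this immediately gives $(1)\Leftrightarrow(2)$.

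For $(2)\Rightarrow(3)$ and $(3)\Rightarrow(1)$ I would compute the Loewy series directly. If $|\mathcal A|=1$, say $\mathcal A=\{T\}$, then $S_1=\mathcal S[R,S]=T\neq S$, and $S_2=\mathcal S[T,S]$; since $T\subset S$ is minimal, $\mathcal S[T,S]=S$, so $\pounds[R,S]=2$. Conversely, if $|\mathcal A|\geq 2$, then $S_1=\mathcal S[R,S]=\prod_{A\in\mathcal A}A$ strictly contains every atom $A$ (two distinct atoms have product strictly larger than either, as each atom is a co-atom here), so $S_1\notin\{R\}\cup\mathcal A=[R,S[$, forcing $S_1=S$ and $\pounds[R,S]=1$. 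This simultaneously establishes $(2)\Rightarrow(3)$, the contrapositive of $(3)\Rightarrow(1)$ via $(1)\Leftrightarrow(2)$, and the final sentence: when the equivalent conditions fail we have $|\mathcal A|\geq 2$ and hence $\pounds[R,S]=1$.

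The one point needing a little care — the main (mild) obstacle — is justifying that for two distinct atoms $A_1,A_2$ one has $A_1A_2=S$, i.e.\ that $A_1A_2\notin\mathcal A$. This follows because $A_1A_2\in[R,S]$ strictly contains $A_1$ (as $A_2\not\subseteq A_1$, the two being distinct minimal extensions of $R$), and the only element of $[R,S]$ strictly containing a given atom is $S$ itself, since atoms here coincide with co-atoms (length $2$). So $A_1A_2=S$, whence $\prod_{A\in\mathcal A}A=S$ as soon as $|\mathcal A|\geq 2$. With this in hand all the implications close up.

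Assembling these pieces: $(1)\Rightarrow(2)$ and $(2)\Rightarrow(1)$ from the count $|[R,S]|=|\mathcal A|+2$; $(2)\Rightarrow(3)$ from the single-atom computation of the Loewy series; $(3)\Rightarrow(1)$ because $|\mathcal A|\geq 2$ would force $\pounds[R,S]=1\neq 2$; and the last statement because the negation of the conditions means $|\mathcal A|\geq 2$, giving $S_1=S$ and $\pounds[R,S]=1$.
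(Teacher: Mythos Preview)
Your proof is correct and complete. The key structural observation that in a length-$2$ extension every proper intermediate ring is simultaneously an atom and a co-atom is exactly what makes the direct computation of the socle go through, and you handle the one nontrivial point (that $A_1A_2=S$ for distinct atoms) cleanly.

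The paper's own proof takes a different, more citation-heavy route: it declares $(1)\Leftrightarrow(2)$ obvious, obtains $(3)\Leftrightarrow(1)$ by invoking Corollary~\ref{8.31} (the characterization $\pounds=\ell\Leftrightarrow$ chain, stated there for \emph{distributive} FCP extensions; this is legitimate for the direction $(1)\Rightarrow(3)$ since a chain is distributive), and then derives the final assertion from the general inequality $\pounds[R,S]\leq\ell[R,S]$ of Proposition~\ref{8.22}. Your argument is more elementary and entirely self-contained: by identifying $]R,S[$ with $\mathcal A$ you bypass any appeal to distributivity or to the earlier length inequality, and you get $(3)\Rightarrow(1)$ directly rather than through the contrapositive via Proposition~\ref{8.22}. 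The trade-off is that the paper's proof is two lines, while yours spells out the mechanism; on the other hand, yours makes transparent \emph{why} length $2$ is special (atoms $=$ co-atoms), which the paper's proof leaves implicit.
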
 
 \begin{proof} (1) $\Leftrightarrow$ (2) Obvious.
 
 (3) $\Leftrightarrow$ (1) by Corollary  \ref{8.31}.
 
 Assume that these conditions do not hold, then $\pounds[R,S]<\ell[R,S]$ by Proposition  \ref{8.22} leads to $\pounds[R,S]=1$.
\end{proof} 
 
  Recall that an extension $R\subset S$ is called {\it pointwise minimal} if $R\subset R[x]$ is minimal for each $x\in S\setminus R$. These extensions were studied by Cahen and the authors in \cite{CPP}.
 
 \begin{proposition} \label{8.312} If $R\subset S$ is  pointwise minimal, then,  $\pounds[R,S]=1$.
\end{proposition}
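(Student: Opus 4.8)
The plan is to show that the socle $\mathcal S[R,S]$ is already all of $S$, so that the Loewy series terminates after its very first step.

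Working inside the complete lattice $[R,S]$ with its set $\mathcal A$ of atoms (we understand $R\subset S$ to have FCP, so that the Loewy series, and in particular $\pounds[R,S]$, is defined), I would first observe that since $R\subset S$ is a proper extension, $S\setminus R\neq\emptyset$, and that for each $x\in S\setminus R$ the pointwise minimality hypothesis says \emph{exactly} that $R\subset R[x]$ is minimal; equivalently, $R[x]$ is an atom of $[R,S]$, i.e. $R[x]\in\mathcal A$.

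Next I would invoke Definition \ref{8.1}(1): $S_1=\mathcal S[R,S]=\prod_{A\in\mathcal A}A$, the compositum of all atoms. Since $R[x]\in\mathcal A$ for every $x\in S\setminus R$, this compositum contains $\prod_{x\in S\setminus R}R[x]$, which is precisely the $R$-subalgebra of $S$ generated by $S\setminus R$, that is, $S$ itself. Hence $S\subseteq\mathcal S[R,S]\subseteq S$, so $S_1=\mathcal S[R,S]=S$; and because $R\neq S$, Definition \ref{8.1}(4) gives $\pounds[R,S]=1$.

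There is no genuine obstacle here: the argument is a one-line socle computation, once one notices that every singly generated intermediate ring $R[x]$ is an atom. The only point worth a remark is that $\pounds[R,S]$ is a priori defined only for FCP extensions, so one should either read the statement under that standing assumption or note (using the structure of pointwise minimal extensions from \cite{CPP}) that the relevant socle formula still applies; in any event it is the identity $\mathcal S[R,S]=S$ that carries the whole proof.
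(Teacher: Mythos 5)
Your proof is correct and takes essentially the same approach as the paper: both show that each $R[x]$ for $x\in S\setminus R$ is an atom, so that $\mathcal S[R,S]\supseteq\prod_{x\in S\setminus R}R[x]=S$, whence $\pounds[R,S]=1$. The paper also records the converse observation that every atom is a simple extension $R[x]$, but as you correctly note, that inclusion is not needed for this computation.
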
 
 \begin{proof} 
 Any atom is a simple extension of $R$ (see \cite[Page 370, before Lemma 1.2]{Pic}). Conversely, let $x\in S\setminus R$, so that  $R\subset R[x]$ is minimal and $R[x]$ is an atom of $[R,S]$. Then, $S=\prod_{x\in S\setminus R}R[x]$ gives that $S=\mathcal S[R,S]=S_1$ and $\pounds[R,S]=1$.
 \end{proof} 
  In order to look at the behavior of  Loewy series under localizations, we need the following Lemma.

\begin{lemma} \label{8.321} Let $R\subset S$ be an FCP extension,  $T\subset U $ a subextension and $M\in\mathrm{Supp}(S/R)$. 

 \begin{enumerate}
 \item If $T\subset U$ is minimal, then either $T_M\subset U_M$ is minimal, or $T_M= U_M$. 
 
\item  If $T_M\subset U_M$ is minimal, there exists $V\in[T,S]$ such that $T\subset V$ is minimal with $V_M=U_M$.
 \end{enumerate}
 \end{lemma}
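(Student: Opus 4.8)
The plan is to prove (1) by localizing at the crucial ideal of $T\subset U$, and then to derive (2) by an induction that transfers a surviving minimal step down past the steps which die at $M$. For (1): by Theorems \ref{crucial} and \ref{minimal}, $T\subset U$ is crucial, with crucial ideal $\mathcal C:=\mathcal C(T,U)\in\mathrm{Max}(T)$ and $\mathrm{Supp}_T(U/T)=\{\mathcal C\}$. If $\mathcal C\cap R\not\subseteq M$, then $\mathcal C$ meets $R\setminus M$, so $(U/T)_M=0$ and $T_M=U_M$. If $\mathcal C\cap R\subseteq M$, then $\mathcal C T_M\in\mathrm{Max}(T_M)$ is the only prime of $\mathrm{Supp}_{T_M}(U_M/T_M)$, so $T_M\subsetneq U_M$ is $\mathcal C T_M$-crucial; it is moreover minimal, because localization at $M$ preserves the type of $T\subset U$ in the sense of Theorem \ref{minimal}: for an integral $T\subset U$ the conductor localizes to $\mathcal C T_M$ and the residue fields $T/\mathcal C$, $U/\mathcal C_i$ occurring in (a)--(c) are unchanged since $\mathcal C$ survives at $M$, while for a flat-epimorphism $T\subset U$ the base change $T_M\to U_M$ is again a proper flat epimorphism, minimal by \cite{FO}.

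For (2): pick a maximal chain $T=W_0\subset W_1\subset\cdots\subset W_r=U$. Localizing at $M$ gives a non-decreasing chain inside $[T_M,U_M]=\{T_M,U_M\}$, so there is a unique index $j$ with $(W_j)_M=T_M$, $(W_{j+1})_M=U_M$ and $W_j\subset W_{j+1}$ minimal. I argue by induction on $\ell[T,U]$: if $j=0$, take $V:=W_1$; if $j\geq1$, the induction hypothesis applied to $W_1\subseteq U$ (which has smaller length and satisfies $(W_1)_M=T_M\subset U_M$ minimal) gives $V''\in[W_1,S]$ with $W_1\subset V''$ minimal and $V''_M=U_M$. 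It then suffices to prove the following \emph{exchange} statement and apply it to $T\subset W_1\subset V''$: if $T\subset W\subset W'$ with both steps minimal, $\mathcal C(T,W)\cap R\not\subseteq M$, and $\mathcal C(W,W')\cap R\subseteq M$, then there is $V\in[T,S]$ with $T\subset V$ minimal and $V_M=W'_M$.

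For the exchange statement, put $N_1:=\mathcal C(T,W)\in\mathrm{Max}(T)$, $N_2:=\mathcal C(W,W')\in\mathrm{Max}(W)$ and $Q:=N_2\cap T\in\mathrm{Max}(T)$ (maximality of $Q$ being a feature of the FCP setting). From $Q\cap R=N_2\cap R\subseteq M$ and $N_1\cap R\not\subseteq M$ we get $Q\neq N_1$, hence $N_1$ and $Q$ are incomparable. Since $\mathrm{Supp}_T(W/T)=\{N_1\}$, localizing at $Q$ gives $W_Q=T_Q$; since $\mathrm{Supp}_T(W'/W)=\{Q\}$, localizing at $N_1$ gives $W'_{N_1}=W_{N_1}$. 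Therefore $\mathrm{Supp}_T(W'/T)=\{N_1,Q\}$, a pair of distinct maximal ideals, and the FCP lattice $[T,W']$ splits as the product $[T_{N_1},W'_{N_1}]\times[T_Q,W'_Q]$; by (1) each factor has exactly two elements, whence $[T,W']=\{T,W,V,W'\}$ is a four-element Boolean lattice. Its fourth element $V$ satisfies $V_{N_1}=T_{N_1}$ and $V_Q=W'_Q$, so $T\subset V$ is minimal; and since $W'/V$ is supported over $T$ only at $N_1$, which does not survive at $M$, we obtain $V_M=W'_M$, completing the argument.

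The crux is the exchange statement, and in particular the product decomposition $[T,W']\cong[T_{N_1},W'_{N_1}]\times[T_Q,W'_Q]$: this needs that in the FCP setting the crucial ideal $N_2$ contracts to a maximal ideal of $T$ (so that $N_1$ and $Q$ are comaximal and $W'/T$ is the direct sum of its two localizations), and the verification has to be carried out uniformly for the integral and the Pr\"ufer types of the two minimal steps. These maximality and componentwise-localization facts belong to the structure theory of FCP and minimal extensions (\cite{DPP2},\cite{FO}), which I would invoke rather than re-derive.
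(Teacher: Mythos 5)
For part (1), you essentially re-derive the Ferrand--Olivier lemma, which the paper cites directly as \cite[Lemme 1.3]{FO}; that is harmless. For part (2), your route is genuinely different from the paper's: the paper passes to the crucial maximal ideal $Q$ of $T_M\subset U_M$, observes that $(T_M)_Q\subset(U_M)_Q$ is a localization $T_N\subset U_N$ at a prime $N$ of $T$, and then invokes \cite[Lemma 3.3]{Pic 10} to produce the lift $V$. Your induction--plus--exchange argument avoids that lemma, but the exchange statement on which it rests has a genuine gap. The assertion that $Q:=\mathcal C(W,W')\cap T$ is a maximal ideal of $T$ --- and hence that $\mathrm{Supp}_T(W'/W)=\{Q\}$, that $N_1$ and $Q$ are comaximal, and that $[T,W']$ is a $2\times 2$ Boolean lattice --- is not a fact you can borrow from FCP structure theory. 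In general one only has $\mathrm{Supp}_T(W'/W)=V(Q)$, and your hypotheses $N_1\cap R\not\subseteq M$ and $Q\cap R\subseteq M$ do not exclude $Q\subsetneq N_1$. That inclusion happens precisely when $T\subset W$ is Pr\"ufer minimal; then $\mathrm{Supp}_T(W'/T)=V(Q)$ is a chain of primes, $[T,W']$ can be a three-element chain, and there is no fourth element to serve as $V$.

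A concrete failure: let $R$ be a two-dimensional Pr\"ufer domain with $\mathrm{Spec}(R)=\{(0),P,M_1,M_2\}$, $P\subset M_1\cap M_2$ and $M_1\neq M_2$ maximal, $S=K$ the quotient field, and take $T=R_{M_1}$, $W=R_P$, $W'=K$, $M=M_2\in\mathrm{MSupp}(S/R)$. Then $N_1=M_1R_{M_1}$, $N_2=PR_P$, $Q=PR_{M_1}\subsetneq N_1$, the lattice $[T,W']=\{R_{M_1},R_P,K\}$ is a chain, and the only $V$ with $T\subset V$ minimal is $V=R_P$, which has $V_{M_2}=R_P\neq K=W'_{M_2}$. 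Thus your exchange statement is false as written. (This example in fact also strains the lemma itself and the paper's invocation of \cite[Lemma 3.3]{Pic 10} at a non-maximal $N$, so the hypothesis on $M$ appears to need strengthening in the source; but in any case, the maximality of $Q$ that you propose to ``invoke rather than re-derive'' is exactly the point that fails, so it cannot be waved through.)
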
 
 
 \begin{proof} (1) is \cite[Lemme 1.3]{FO}.
 
 (2) Assume that $T_M\subset U_M$ is minimal. Let $Q:=\mathcal C(T_M,U_M)\in\mathrm {MSupp}_{T_M}(U_M/T_M)\subseteq \mathrm{Max}(T_M)$. There exists $N\in\mathrm{Spec}(T)$, such that $Q=N_M$. 
 In particular, we have $(T_M)_Q\subset (U_M)_Q$  minimal $(*)$ and $(T_M)_P=(U_M)_P$ for each $P\in \mathrm{Spec}(T_M),\ P\neq Q\ (**)$. Since $(T_M)_Q=(T_M)_{N_M}=T_N$ and $(U_M)_Q=(U_M)_{N_M}=U_N$ by \cite[Proposition 7, page 85]{ALCO},  $(*)$ implies that $T_N\subset U_N$ is  minimal. Then, there exists some $V\in[T,S]$ such that $T\subset V$ is minimal with $V_N=U_N=(U_M)_Q=(V_M)_Q$  \cite[Lemma 3.3]{Pic 10}. It follows that  $V_{N'}=T_{N'}$ for any $N'\in \mathrm{Spec}(T),\ N'\neq N$. But $(**)$ gives, for $N'\in\mathrm{Spec}(T)$ such that $P=N'_M$ where $P\neq Q$, that $N'\neq N$, so that $(T_M)_P=(U_M)_P=U_{N'}=V_{N'}=(V_M)_P$, which leads to $U_M=V_M$. To conclude, if $T_M\subset U_M$ is minimal, there exists $V\in[T,S]$ such that $T\subset V$ is minimal with $V_M=U_M$.
 \end{proof} 

\begin{proposition} \label{8.8} Let $R\subset S$ be an FCP extension with Loewy series $\{S_i\}_{i=0}^n$. Then:
 \begin{enumerate}
 \item   Let $I$ be an  ideal shared by $R$ and $S$. Then $\{S_i/I\}_{i=0}^n$ is the Loewy series of $R/I\subset S/I$.
 
  \item Let $M\in\mathrm{Supp}(S/R)$ and $\{S'_i\}_{i=0}^{n_M}$ be the Loewy series of $[R_M,S_M]$. Then $S'_i=(S_i)_M$ for each $i\in\{0,\ldots,n_M\}$, where $n_M=\inf\{i\in\mathbb N_n\mid M\not\in\mathrm{Supp}(S/S_i)\}$.
  \end{enumerate}
\end{proposition}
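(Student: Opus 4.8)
The plan is to handle the two assertions separately, in each case reducing to the key fact that the socle is the product of atoms, and that atoms are controlled by localization via Lemma~\ref{8.321}.

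For (1), I would argue by induction on $i$ that $S_i/I = \mathcal S[S_{i-1}/I, S/I]$, the base case $i=0$ being trivial. The ideal $I$ being shared by $R$ and $S$ (hence by every $T\in[R,S]$), the assignment $T\mapsto T/I$ is an isomorphism of the lattice $[R,S]$ onto $[R/I,S/I]$: it is inclusion-preserving, bijective with inverse the preimage, and carries products to products and intersections to intersections. Therefore it carries atoms to atoms, and since the socle is defined purely lattice-theoretically as the product of all atoms, it carries $\mathcal S[T,S]$ to $\mathcal S[T/I,S/I]$ for every $T\in[R,S]$. Feeding $T=S_{i-1}$ and using the induction hypothesis $S_{i-1}/I = S'_{i-1}$ (the $(i-1)$-st term of the Loewy series of $R/I\subset S/I$) gives $S'_i = \mathcal S[S_{i-1}/I,S/I] = (\mathcal S[S_{i-1},S])/I = S_i/I$. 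One also checks the series terminates at the same index, since $S_n=S$ iff $S_n/I=S/I$.

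For (2), again I proceed by induction on $i$, the claim being $(S_i)_M = S'_i$ for $i\le n_M$, together with $n_M$ being as described. The heart is the identity $\mathcal S[T,S]_M = \mathcal S[T_M,S_M]$ whenever $M\in\mathrm{Supp}(S/T)$, which I would establish from Lemma~\ref{8.321}: localization at $M$ sends atoms $A$ of $[T,S]$ either to $T_M$ or to atoms $A_M$ of $[T_M,S_M]$ (part (1)), and conversely every atom of $[T_M,S_M]$ arises as $V_M$ for some atom $V$ of $[T,S]$ (part (2)); since localization commutes with finite products (the extension has FCP, so $\mathcal A$ is finite), $\mathcal S[T,S]_M=\bigl(\prod_{A\in\mathcal A}A\bigr)_M=\prod_{A\in\mathcal A}A_M=\prod_{B\in\mathcal A(T_M,S_M)}B=\mathcal S[T_M,S_M]$. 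Applying this with $T=S_{i-1}$ and using the inductive hypothesis $(S_{i-1})_M=S'_{i-1}$ yields $(S_i)_M=\mathcal S[S_{i-1},S]_M=\mathcal S[(S_{i-1})_M,S_M]=\mathcal S[S'_{i-1},S_M]=S'_i$, valid exactly as long as $M\in\mathrm{Supp}(S/S_{i-1})$, i.e. as long as $S'_{i-1}\neq S_M$; once $M\notin\mathrm{Supp}(S/S_i)$, i.e. $(S_i)_M=S_M$, the Loewy series of $[R_M,S_M]$ has already stabilized. This pins down $n_M=\inf\{i\in\mathbb N_n\mid M\notin\mathrm{Supp}(S/S_i)\}$, using that the chain $\mathrm{Supp}(S/S_0)\supseteq\mathrm{Supp}(S/S_1)\supseteq\cdots$ is decreasing and that $M\in\mathrm{Supp}(S/R)$ guarantees $n_M\ge 1$.

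The main obstacle I anticipate is the socle-localization identity $\mathcal S[T,S]_M=\mathcal S[T_M,S_M]$: one must be careful that distinct atoms of $[T,S]$ may localize to the same atom of $[T_M,S_M]$ or may collapse to $T_M$, so the equality of products needs the surjectivity statement in Lemma~\ref{8.321}(2) as well as the injectivity-up-to-collapse in part (1); and one must confirm that forming the product (compositum) genuinely commutes with localization, which is clear for a \emph{finite} family since $S_{R\setminus M}$ is a flat $R$-algebra and compositum of finitely many subalgebras is their image under multiplication. Everything else is bookkeeping about where the two Loewy series terminate.
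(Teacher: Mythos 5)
Your proof follows essentially the same route as the paper: part (1) via the lattice isomorphism $T\mapsto T/I$ and part (2) by an induction driven by Lemma~\ref{8.321}. There is, however, one genuine inaccuracy that needs repair. You justify the step $\bigl(\prod_{A\in\mathcal A}A\bigr)_M=\prod_{A\in\mathcal A}A_M$ by asserting that ``the extension has FCP, so $\mathcal A$ is finite,'' and at the end you flag that commuting compositum with localization ``is clear for a \emph{finite} family.'' But FCP does not imply that the set of atoms is finite: Remark~\ref{9.32} of this very paper gives an FCP radicial extension $k\subset k[\alpha,\beta]$ of length $2$ that is not FIP, and in that example \emph{every} intermediate field other than the endpoints is an atom, so $\mathcal A$ is infinite.

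This does not break the strategy, but your stated justification does not cover the case that actually occurs. Two ways to patch it: (a) note that the compositum $\prod_{A\in\mathcal A}A$ is the directed union (colimit) of the finite subcomposita $\prod_{A\in F}A$, and localization at $M$ commutes with directed colimits, so the identity $\bigl(\prod_A A\bigr)_M=\prod_A A_M$ holds for an arbitrary index set; or (b) do what the paper actually does and avoid the equality altogether, proving the two inclusions $(S_{i+1})_M\subseteq S'_{i+1}$ and $S'_{i+1}\subseteq (S_{i+1})_M$ separately --- the first using Lemma~\ref{8.321}(1) (each $A_M$ lands in $[S'_i,S'_{i+1}]$, hence so does the subalgebra they generate), the second using Lemma~\ref{8.321}(2) (each atom $B'$ of $[S'_i,S_M]$ is $B_M$ for some atom $B$ of $[S_i,S]$, so $B'\subseteq (S_{i+1})_M$). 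Either fix is a one-line change; with it in place your argument is complete and matches the paper's.
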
 

\begin{proof} (1) Let $T,U\in[R,S]$. By \cite[Corollary 1.4]{Pic}, $T\subset U$ is minimal if and only if $T/I\subset U/I$ is minimal. Then, an easy induction on $i$ shows that the Loewy series of $R/I\subset S/I$ is  $\{S_i/I\}_{i=0}^n$.

(2) Let $M\in\mathrm{Supp}(S/R)$. We show by induction on $i\in\{0,\ldots,n_M\}$ that $S'_i=(S_i)_M$.

The induction hypothesis is satisfied for $i=0$.

Assume that it holds for $i\in\{0,\ldots,n_M-1\}$, that is $S'_i=(S_i)_M$. Let $A$ be an atom of $[S_i,S]$, so that $S_i\subset A$ is minimal. Then, from Lemma  \ref{8.321}, we infer that either $(S_i)_M\subset A_M$ is minimal, or $(S_i)_M=A_M$, so that $A_M\in[S'_i,S'_{i+1}]$. It follows that $(S_{i+1})_M\subseteq S'_{i+1}$. 

 Now  $M\not\in\mathrm{Supp}(S/S_i)$ leads to $S'_i=(S_i)_M=S_M=S'_{i+1}$ and $i\geq n_M$, a contradiction,  so that 
 $M\in\mathrm{Supp}(S/S_i)$. Let 
 $B'$ be an atom of $[(S_i)_M,S_M]$, so that $(S_i)_M=S'_i\subset B'$ is minimal. There exists some $U\in[S_i,S]$ such that $B'=U_M$, with $(S_i)_M\subset U_M$  minimal. By Lemma  \ref{8.321}, there exists $B\in[S_i,S]$ such that $S_i\subset B$ is minimal, with $B_M=U_M=B'$, so that $B\subseteq S_{i+1}$, giving $B'=B_M\subseteq (S_{i+1})_M$.  Since $S'_{i+1}$ is the product of all atoms $B'$ of $[(S_i)_M,S_M]$, we get that $S'_{i+1}\subseteq (S_{i+1})_M$. To conclude, $S'_{i+1}= (S_{i+1})_M$ and the induction hypothesis holds for $i+1$. As we have just seen before, we get $S'_i=(S_i)_M=S_M=S'_{i+1}$ as soon as $M\not\in\mathrm{Supp}(S/S_i)$, so that $n_M=\inf\{i\in\mathbb N_n\mid M\not\in\mathrm{Supp}(S/S_i)\}$.
\end{proof} 

\begin{corollary} \label{8.82} Let $R\subset S$ be an FCP extension. Then, $\pounds[R,S]=\sup\{\pounds[R_M,S_M]\mid M\in\mathrm{MSupp}(S/R)\}$.
\end{corollary}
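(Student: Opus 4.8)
The plan is to deduce the formula directly from Proposition~\ref{8.8}(2), which already computes each localized Loewy series. Write $\{S_i\}_{i=0}^n$ for the Loewy series of $R\subset S$, so that $n=\pounds[R,S]$. For every $M\in\mathrm{MSupp}(S/R)$, Proposition~\ref{8.8}(2) asserts that $\{(S_i)_M\}_{i=0}^{n_M}$ is the Loewy series of $R_M\subset S_M$, where $n_M=\inf\{i\in\mathbb N_n\mid M\notin\mathrm{Supp}(S/S_i)\}$; hence $\pounds[R_M,S_M]=n_M\le n$. Taking the supremum over such $M$ gives immediately the inequality $\sup\{\pounds[R_M,S_M]\mid M\in\mathrm{MSupp}(S/R)\}\le\pounds[R,S]$.

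For the reverse inequality I would exhibit a maximal ideal at which the bound $n_M=n$ is attained. Since $R\subset S$ is proper, $n\ge 1$, and the minimality of $n$ in Definition~\ref{8.1}(4) forces $S_{n-1}\subsetneq S_n=S$, so that the $R$-module $S/S_{n-1}$ is nonzero. Because $R\subseteq S_{n-1}$, this module is a quotient of $S/R$, whence $\mathrm{Supp}_R(S/S_{n-1})\subseteq\mathrm{Supp}_R(S/R)$; and, being nonzero, $S/S_{n-1}$ has some maximal ideal $M$ in its support, so $M\in\mathrm{Supp}(S/R)\cap\mathrm{Max}(R)=\mathrm{MSupp}(S/R)$. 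Moreover, whenever $i\le j$ the module $S/S_j$ is a quotient of $S/S_i$, so $\mathrm{Supp}(S/S_j)\subseteq\mathrm{Supp}(S/S_i)$. Consequently $M\in\mathrm{Supp}(S/S_i)$ for every $i\le n-1$, while $\mathrm{Supp}(S/S_n)=\mathrm{Supp}(0)=\emptyset$, so the infimum defining $n_M$ equals $n$; that is, $\pounds[R_M,S_M]=n=\pounds[R,S]$, and the reverse inequality follows.

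Since the substantive content is already carried by Proposition~\ref{8.8}(2), there is no genuine obstacle; the only point deserving a word is that the supremum on the right-hand side is actually attained, which is exactly what the explicit choice of $M$ in the second paragraph provides. One should also keep in mind the indexing convention $\mathbb N_n=\{1,\dots,n\}$, so that the index $i=0$ (for which $S_0=R$ and $M\in\mathrm{Supp}(S/R)$ anyway) never enters the infimum, and the trivial case $R=S$, where both sides equal $0$.
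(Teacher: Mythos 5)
Your proof is correct and follows essentially the same route as the paper's: both inequalities are extracted from Proposition~\ref{8.8}(2), with the easy direction coming from $n_M\leq n$ and the hard direction from the observation that $S_{n-1}\subsetneq S$ forces some $M\in\mathrm{MSupp}(S/R)$ with $M\in\mathrm{Supp}(S/S_{n-1})$, hence $n_M=n$. The only cosmetic difference is that the paper phrases this last step as a proof by contradiction (``if $n_M<n$ for all $M$, then $(S_{n-1})_M=S_M$ everywhere, so $S_{n-1}=S$''), whereas you exhibit the witnessing $M$ directly; these are the same argument.
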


\begin{proof}  Let $M\in\mathrm{Supp}(S/R)$ and $\{S'_i\}_{i=0}^{n_M}$ be the Loewy series of $[R_M,S_M]$. Set $n:=\pounds[R,S]$. We proved in Proposition  \ref{8.8} that $S'_i=(S_i)_M$ for each $i\in\{0,\ldots,n_M\}$, where $n_M=\pounds[R_M,S_M]=\inf\{i\in\mathbb N_n\mid M\not\in\mathrm{Supp}(S/S_i)\}$. Then, for any $M\in\mathrm{Supp}(S/R),\ \pounds[R_M,S_M]\leq \pounds[R,S]$. Assume that for any $M\in\mathrm{Supp}(S/R),\ \pounds[R_M,S_M]< \pounds[R,S]=n$. It follows that $S'_{n-1}=(S_{n-1})_M=S_M$, and hence $S_{n-1}=S$, contradicting the definition of $n$. Then, $\pounds[R,S]=\sup\{\pounds[R_M,S_M]\mid M\in\mathrm{MSupp}(S/R)\}$.
\end{proof}  

\begin{example} \label{8.81}  
 We use the   example of Remark  \ref{4.4} in order to exhibit a computation of $n_M$ in Proposition \ref{8.8}:
 
  $k\subset L$ is a radicial FIP field extension of degree $p^2$ and $K$ is the only proper subalgebra of $L$. Set $R:=k^2,\ R_1:=[k[X]/(X^2)]\times k,\ R_2:=k\times K,\ R_3:=k\times L,\ S:=[k[X]/(X^2)]\times L,\ M:=0\times k$ and $N:=k\times 0$. Then, $\mathrm{Max}(R)=\{M,N\}$ with $M\neq N$.  
  Moreover, $k\subset k[X]/(X^2)$ is a minimal ramified extension and $S=R_1R_3$. By \cite[Lemma III.3 (d)]{DMPP}, $[R,S]=\{R,R_1,R_2,R_1R_2,R_3,S\}$.  We have the following diagram, where $R_1R_2=[k[X]/(X^2)]\times K$: 
 $$\begin{matrix}
 {} &        {}      & R_1 &       {}      &{}             & {}            & {} \\
 {} & \nearrow & {}     & \searrow & {}            & {}            & {} \\
 R &       {}      & {}     &      {}        & R_1R_2 & {}            & {} \\
 {} & \searrow & {}     & \nearrow & {}            & \searrow & {} \\
 {} &      {}       & R_2 & \to           & R_3        & \to           & S\end{matrix}$$
From \cite[Proposition III.4]{DMPP}, we deduce that  $R\subset R_1$  is a minimal extension with ${\mathcal C}(R,R_1)=M,\ R\subset R_2$ is a minimal  extension with ${\mathcal C}(R,R_2)=N\neq M$, so that  $T$ is an atom of $[R,S]$ if and only if $T\in\{k\times K, [k[X]/(X^2)]\times k\}=\{R_1,R_2\}$, and then $S_1=R_1R_2$. Using again \cite[Proposition III.4]{DMPP}, we get  that $S_1\subset S$ is minimal, so that $S_2=S$. Then, $n=2$. Now,  $(S_1)_M=(R_1)_M=S_M=(S_2)_M$, so that $ n_M=1$  and $(S_1)_N=(R_2)_N\neq  S_N=(R_3)_N=(S_2)_N$, whence  $n_N=2$.
 \end{example} 

  Let $R\subset S$ be an FCP extension and $\mathrm{MSupp}(S/R):=\{M_1,\ldots,M _n\} $. Consider the map $\varphi:[R,S]\to{\prod}_{i=1}^n[R_{M_i},S_{M_i}]$ defined by $\varphi(T):=(T_{M_1},\ldots,T_{M_n})$. Then $\varphi$ is injective  \cite[Theorem 3.6]{DPP2}. In \cite{Pic 10}, we called   $R\subseteq S $  a  {\it $\mathcal B$-extension} if $\varphi$ is bijective ($\mathcal B$ stands for bijective).  We proved in \cite[Proposition 2.20]{Pic 10} that  $R \subseteq S$ is a $\mathcal B$-extension if and only if $R/P$ is  local for each $P\in\mathrm{Supp}(S/R)$. This condition   holds in case $\mathrm {Supp}(S/R)\subseteq\mathrm {Max}(R)$, and, in particular, if $R\subset S$ is  integral. The Loewy series of such extensions have nice properties. 

\begin{proposition} \label{8.33} Let $R\subset S$ be an FCP $\mathcal B$-extension. For each $M\in\mathrm{MSupp}(S/R)$, set $ \mathcal A_M:=\{A\in \mathcal A\mid \mathcal{C}(R,A)=M\}$. Then:
\begin{enumerate}
 \item $\mathcal S[R_M,S_M]=\prod_{A\in\mathcal A_M} A_M$.
 
 \item There exists a (unique) $S_{1,M}\in[R,S]$ such that $(S_{1,M})_M=\mathcal S[R_M,S_M]$ and $(S_{1,M})_{M'}=R_{M'}$ for any $M'\in\mathrm{Spec}(R),\ M'\neq M$. In particular, $S_{1,M}=\prod_{A\in\mathcal A_M} A$.
 
 \item $\mathcal A=\cup_{M\in\mathrm{MSupp}(S/R)}\mathcal A_M$ and $S_1=\prod_{M\in\mathrm{MSupp}( S/R)}S_{1,M}$.
 \end{enumerate}
\end{proposition}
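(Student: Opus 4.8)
The plan is to establish the three parts in order. I would obtain (1) and (2) as local computations of the socle, reducing them to Proposition~\ref{8.8}(2) (the behaviour of the Loewy series under localization) together with the fact that a minimal extension $R\subset A$ is crucial, its crucial ideal $\mathcal C(R,A)$ being maximal (Theorem~\ref{crucial} and Definition~\ref{crucial 1}); part (3) then follows by bookkeeping, using that localizing a compositum produces the compositum of the localizations.

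For (1): Proposition~\ref{8.8}(2), applied to $M\in\mathrm{MSupp}(S/R)\subseteq\mathrm{Supp}(S/R)$, identifies the socle of $[R_M,S_M]$ with $(S_1)_M$. Since $S_1=\mathcal S[R,S]=\prod_{A\in\mathcal A}A$ and localization commutes with composita, $\mathcal S[R_M,S_M]=\prod_{A\in\mathcal A}A_M$. For $A\in\mathcal A\setminus\mathcal A_M$ one has $M\neq\mathcal C(R,A)$, hence $M\notin\mathrm{Supp}(A/R)=\{\mathcal C(R,A)\}$ and $A_M=R_M$; discarding these trivial factors yields $\mathcal S[R_M,S_M]=\prod_{A\in\mathcal A_M}A_M$.

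For (2): the candidate is $S_{1,M}:=\prod_{A\in\mathcal A_M}A\in[R,S]$. Localizing and using (1), $(S_{1,M})_M=\prod_{A\in\mathcal A_M}A_M=\mathcal S[R_M,S_M]$. If $M'\in\mathrm{Spec}(R)$ and $M'\neq M$, then every $A\in\mathcal A_M$ satisfies $M'\notin\mathrm{Supp}(A/R)=\{M\}$, so $A_{M'}=R_{M'}$ and therefore $(S_{1,M})_{M'}=R_{M'}$. For uniqueness, any $T\in[R,S]$ with $T_M=\mathcal S[R_M,S_M]$ and $T_{M'}=R_{M'}$ for $M'\neq M$ has the same localization as $S_{1,M}$ at every maximal ideal of $R$, and an $R$-submodule of $S$ is determined by its localizations at maximal ideals, whence $T=S_{1,M}$. (Strictly, only FCP is used in (1)--(2); the $\mathcal B$-extension hypothesis is the ambient one that makes $S_{1,M}$ play the role of the ``$M$-primary component'' of the socle, as (3) makes precise.)

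For (3): if $A\in\mathcal A$ then $R\subset A$ is minimal, so $M:=\mathcal C(R,A)$ is a maximal ideal with $M\in\mathrm{Supp}(A/R)\subseteq\mathrm{Supp}(S/R)$, i.e. $M\in\mathrm{MSupp}(S/R)$ and $A\in\mathcal A_M$; conversely $\mathcal A_M\subseteq\mathcal A$, and the union $\mathcal A=\bigcup_{M\in\mathrm{MSupp}(S/R)}\mathcal A_M$ is disjoint because the crucial ideal of a minimal extension is unique. Hence $S_1=\mathcal S[R,S]=\prod_{A\in\mathcal A}A=\prod_{M\in\mathrm{MSupp}(S/R)}\Big(\prod_{A\in\mathcal A_M}A\Big)=\prod_{M\in\mathrm{MSupp}(S/R)}S_{1,M}$ by (2). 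I expect the only delicate point to be the first step of (1): one needs the socle to localize to the socle, which is exactly Proposition~\ref{8.8}(2); once that is granted, the remainder is the elementary remark that atoms with the ``wrong'' crucial ideal become trivial after localizing at $M$.
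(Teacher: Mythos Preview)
Your proof is correct and follows essentially the same line as the paper's: both use Proposition~\ref{8.8}(2) for (1), discard atoms with the wrong crucial ideal via $A_M=R_M$, and obtain (3) by partitioning $\mathcal A$ according to crucial ideals. The only minor difference is in (2): the paper first invokes the bijectivity of $\varphi:[R,S]\to\prod_i[R_{M_i},S_{M_i}]$ (the $\mathcal B$-hypothesis) to get existence and uniqueness abstractly and then identifies the element as $\prod_{A\in\mathcal A_M}A$, whereas you construct this compositum directly and verify its localizations, obtaining uniqueness from the injectivity of $\varphi$ (equivalently, that an $R$-submodule is determined by its localizations). Your observation that only FCP is needed for (1)--(2) is correct and is a small sharpening of the paper's argument.
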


\begin{proof} (1) $(\mathcal S[R,S])_M=\mathcal S[R_M,S_M]$ by Proposition \ref{8.8}. It follows that $\mathcal S[R_M,S_M] =(\prod_{A\in\mathcal A} A)_M=\prod_{A\in\mathcal A} A_M$.  Let $M'\in\mathrm{MSupp}( S/R),\ M'\neq M$ and $A'\in\mathcal A_{M'}$. Then, $\mathcal{C}(R,A')=M'$ implies $A'_M=R_M$, so that  $\mathcal S[R_M,S_M]=\prod_{A\in\mathcal A_M} A_M$.

(2) Since $R\subset S$ is a $\mathcal B$-extension, the bijective map $\varphi:[R,S]\to {\prod}_{i=1}^n[R_{M_i},S_{M_i}]$ shows that there exists a unique $S_{1,M}\in[R,S]$ such that $(S_{1,M})_M=\mathcal S[R_M,S_M]=\prod_{A\in\mathcal A_M} A_M$ and $(S_{1,M})_{M'}=R_{M'}$ for any $M'\in\mathrm{Spec}(R),\  M'\neq M$. Moreover, for $A\in\mathcal A_M$, we have $A_{M'}=R_{M'}$, giving $\prod_{A\in\mathcal A_M} A_{M'}=R_{M'}$, so that $S_{1,M}=\prod_{A\in\mathcal A_M} A$.

(3) $\mathcal A=\cup_{M\in\mathrm{MSupp}(S/R)}\mathcal A_M$ because, for each $A\in\mathcal A$, we have $\mathcal{C}(R,A)\in\mathrm{MSupp}(S/R)$, which leads to $S_1=\prod_{A\in\mathcal A} A=\prod_{M\in\mathrm{MSupp}( S/R)}(\prod_{A\in\mathcal A_M} A)$

\noindent $=\prod_{M\in\mathrm{MSupp}( S/R)}S_{1,M}$. 
\end{proof} 

\begin{proposition} \label{8.34} Let $R\subset S$ be an integral arithmetic FCP extension with Loewy series $\{S_i\}_{i=0}^n$. For each $M\in\mathrm{MSupp}(S/R)$, let $\{R_{M,i}\}_{i=0}^{n_M}$ be the maximal chain of $[R_M,S_M]$,  set $ \mathcal A_M:=\{A\in \mathcal A\mid \mathcal{C}(R,A)=M\}$ and $m:=\sup_{M\in\mathrm{MSupp}(S/R)}(\pounds[R_M,S_M])$. Then:
\begin{enumerate}
 \item For each $M\in\mathrm{MSupp}(S/R),\ | \mathcal A_M|=1$. Let $T_{1,M}$ be the unique element of   $  \mathcal A_M$.
 
 \item  $\mathcal S[R,S]=\prod_{M\in\mathrm{MSupp}(S/R)} T_{1,M}$.
 
 \item For each $i\in \mathbb N_m$, the atoms of $[S_{i-1}, S]$ are the $T_{i,M}$ such that, for each $M,M'\in\mathrm{MSupp}(S/R),\ M'\neq M,\ (T_{i,M})_M=R_{M,i}$ if $i\leq n_M$ and $(T_{i,M})_M=S_M$ if $i> n_M$, with $(T_{i,M})_{M'}=R_{M',i-1}$. Moreover, $S_i=\prod_{M\in\mathrm{MSupp}(S/R)} T_{i,M}$. In particular, $\pounds[R,S]=m$.
 
 \item  $\ell[R,S]=\sum_{i=0}^{n-1}\ell[S_i,S_{i+1}]=\sum_{M\in\mathrm{MSupp}(S/R)}\pounds[R_M,S_M]$.
 \end{enumerate}
\end{proposition}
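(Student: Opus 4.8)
The plan is to reduce every assertion to the combinatorics of a finite product of chains. Since $R\subset S$ is arithmetic it is distributive (Definition~\ref{4.2}), hence FIP, and since it is integral it is a $\mathcal B$-extension; moreover every subextension $T\subset S$ with $T\in[R,S]$ is again integral with FCP, so $\mathrm{MSupp}(S/T)\subseteq\mathrm{Max}(T)$ and $T\subset S$ is also a $\mathcal B$-extension. Hence, for each such $T$, the map $\varphi_T\colon[T,S]\to\prod_{M\in\mathrm{MSupp}(S/R)}[T_M,S_M]$ (with the $M$-factor trivial whenever $T_M=S_M$) is a lattice isomorphism, and arithmeticity forces every $[R_M,S_M]$, hence every $[T_M,S_M]$, to be a chain. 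By Corollary~\ref{8.31}(1) applied to the chain $[R_M,S_M]$ one has $\pounds[R_M,S_M]=\ell[R_M,S_M]=n_M$ and the Loewy series of $[R_M,S_M]$ is $\{R_{M,i}\}_{i=0}^{n_M}$ itself; in particular $m=\sup_M n_M$.

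For (1) and (2): the chain $[R_M,S_M]$ ($n_M\ge 1$, since $M\in\mathrm{MSupp}(S/R)$) has exactly one atom, namely $R_{M,1}$. By Proposition~\ref{8.33}(1) every $A\in\mathcal A_M$ satisfies $A_M=R_{M,1}$ and $A_{M'}=R_{M'}$ for $M'\ne M$, so injectivity of $\varphi_R$ gives $|\mathcal A_M|=1$; write $T_{1,M}$ for its element. Then Proposition~\ref{8.33}(2)--(3) yields $\mathcal S[R,S]=S_1=\prod_M S_{1,M}=\prod_M\bigl(\prod_{A\in\mathcal A_M}A\bigr)=\prod_M T_{1,M}$, which is (2).

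For (3): by Proposition~\ref{8.8}(2), $(S_i)_M$ is the $i$-th term of the Loewy series of the chain $[R_M,S_M]$, hence equals $R_{M,i}$ if $i\le n_M$ and $S_M$ if $i>n_M$. Using surjectivity of $\varphi_R$, define $T_{i,M}\in[R,S]$ by the localizations prescribed in the statement; localizing shows $S_{i-1}\subseteq T_{i,M}$, and comparing $M$-components (the compositum of several copies of $R_{M,i-1}$ with one copy of $R_{M,i}$ is $R_{M,i}$, because the chain is linearly ordered) shows $\prod_M T_{i,M}=S_i$. The main point is the description of the atoms of $[S_{i-1},S]$: through $\varphi_{S_{i-1}}$ this lattice becomes a finite product of chains, whose atoms are exactly the tuples that equal the (unique) atom in one coordinate and the bottom element in all others; these are precisely the $\varphi_{S_{i-1}}(T_{i,M})$ with $i\le n_M$ (for $i>n_M$ one has $(S_{i-1})_M=S_M$, so $T_{i,M}=S_{i-1}$ and contributes nothing). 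Thus the atoms of $[S_{i-1},S]$ are the $T_{i,M}$ with $i\le n_M$, and $S_i=\mathcal S[S_{i-1},S]=\prod_M T_{i,M}$. Finally $\pounds[R,S]=m$ follows from Corollary~\ref{8.82} together with $\pounds[R_M,S_M]=n_M$; alternatively, $S_i=S$ iff $(S_i)_M=S_M$ for all $M$ iff $i\ge n_M$ for all $M$ iff $i\ge m$.

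For (4): distributivity gives $\ell[R,S]=\sum_{i=0}^{n-1}\ell[S_i,S_{i+1}]$ by Proposition~\ref{8.3}. Transporting lengths through $\varphi_R$, which identifies $[R,S]$ with the product of the chains $[R_M,S_M]$ of lengths $n_M$, and using that length is additive over finite products of finite-length lattices, gives $\ell[R,S]=\sum_M n_M=\sum_M\pounds[R_M,S_M]$. The step I expect to require the most care is organizational rather than conceptual: setting up the $\mathcal B$-extension isomorphisms $\varphi_{S_{i-1}}$ correctly (checking that $\mathrm{MSupp}(S/S_{i-1})$ consists of maximal ideals, so $\varphi_{S_{i-1}}$ is bijective) and carrying out the elementary but slightly delicate bookkeeping for atoms and lengths in a finite product of chains; once these are in place, each assertion collapses to a localization computation.
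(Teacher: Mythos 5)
Your proof is correct and follows the paper's route for parts (1)--(3): localize, exploit that the $\mathcal{B}$-extension bijection $\varphi$ turns $[R,S]$ into a finite product of the chains $[R_M,S_M]$, and invoke Proposition~\ref{8.8} and Corollary~\ref{8.82}. You package this a bit more systematically via $\varphi_{S_{i-1}}$ and the "product of chains'' picture, but the underlying mechanism is the same one the paper uses (Lemma~\ref{8.321}, Proposition~\ref{8.33}, Proposition~\ref{8.8}).

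Where you genuinely depart from the paper is part (4): the paper computes $\ell[R,S]=\sum_i\ell[S_i,S_{i+1}]$ by counting atoms of each Boolean layer $[S_i,S_{i+1}]$ and then (implicitly) exchanging a double sum over $i$ and $M$, whereas you invoke additivity of length over a finite product of finite-length modular lattices directly. Both are correct, and your version has the merit of being a one-line argument once the lattice isomorphism is in place; it also avoids a slip in the paper's proof, whose final display reads $\sum_M(n_M-1)$ rather than the correct $\sum_M n_M$ that the double-sum exchange actually yields (and that the statement asserts). One small citation imprecision in your write-up: you credit Proposition~\ref{8.33}(1) with the claim that every $A\in\mathcal A_M$ has $A_M=R_{M,1}$, but that proposition only gives $\mathcal S[R_M,S_M]=\prod_{A\in\mathcal A_M}A_M$; the identification $A_M=R_{M,1}$ needs the extra step (as in the paper) that $R_M\subset A_M$ is minimal via Lemma~\ref{8.321}(1) and hence is the unique atom of the chain $[R_M,S_M]$. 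That step is implicit in what you wrote and is easily supplied, so it is a matter of precision rather than a gap.
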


\begin{proof} 
Since $R\subset S$ is arithmetic, for each $M\in\mathrm{MSupp}(S/R),\ \{R_{M,i}\}_{i=0}^{n_M}$ is the Loewy series of $R_M\subset S_M$ by Corollary  \ref{8.31}.

(1) Let $M\in\mathrm{MSupp}(S/R)$. Since $R_M\subset S_M$ is a chain, $R_{M,1}$ is its only atom.  From Lemma \ref{8.321}, we deduce that  there is  $T_{1,M}\in[R,S]$ such that $R\subset T_{1,M}$ is minimal with $(T_{1,M})_M=R_{M,1}$ and $(T_{1,M})_{M'}=R_{M'}$ for  $M'\in\mathrm{MSupp}(S/R),\ M'\neq M \ (*)$. Since $R\subset S$ is integral, it is a $\mathcal B$-extension by \cite[Proposition 2.20]{Pic 10}, so that $T_{1,M}\in \mathcal A_M$  is the only element of $\mathcal A$ satisfying $(*)$. Then, $  \mathcal A_M=\{T_{1,M}\}$ and $| \mathcal A_M|=1$.

(2)  Proposition \ref{8.33} yields that $\mathcal S[R,S]=\prod_{M\in\mathrm{MSupp}(S/R)}(\prod_{A\in\mathcal A_M} A)= \prod_{M\in\mathrm{MSupp}(S/R)}T_{1,M}$.

(3) We proved in Proposition  \ref{8.8} that for $M\in\mathrm{Supp}(S/R)$ and $\{S'_i\}_{i=0}^{n_M}$ the Loewy series of $[R_M,S_M]$, then $S'_i=(S_i)_M$ for each $i\in\{0,\ldots,n_M\}$, where $n_M=\inf\{i\in\mathbb N_n\mid M\not\in\mathrm{Supp}(S/S_i)\}$. In particular, $S'_i=(S_i)_M=R_{M,i}$. Let $T$ be an  atom of $[S_{i-1}, S]$, and set $M:=\mathcal C(S_{i-1},T)\cap R\in\mathrm{MSupp}_R(S/S_{i-1})$, so that $i\leq n_M$ and  $S_{i-1}'\subset T_M$ is minimal. Moreover, by minimality of $S_{i-1}\subset T$,  we get $T_{M'}=(S_{i-1})_{M'}$ for $M'\in\mathrm{MSupp}(S/R),\ M'\neq M$. Since, $R\subset S$ is  a $\mathcal B$-extension, it follows that $T$ is unique for a given $M$, and of the form $T_{i,M}$ such that, for each $M,M'\in\mathrm{MSupp}(S/R),\ M'\neq M,\ (T_{i,M})_M=R_{M,i}$ if $i\leq n_M$ and $(T_{i,M})_M=S_M$ if $i> n_M$, and $(T_{i,M})_{M'}=R_{M'}$. Moreover, $S_i=\prod_{M\in\mathrm{MSupp}(S/R)} T_{i,M}$. In particular, $\pounds[R,S]=\sup_{M\in\mathrm{MSupp}(S/R)}(\pounds[R_M,S_M])=m$ by Corollary  \ref{8.82}.

(4) Since $R\subset S$ is arithmetic, it is distributive (Definition  \ref{4.2}), so that $\ell[R,S]=\sum_{i=0}^{n-1}\ell[S_i,S_{i+1}]$ by Proposition \ref{8.3}. But, $S_i\subset S_{i+1}$ is Boolean, so that $\ell[S_i,S_{i+1}]$ is the number of atoms of $S_i\subset S_{i+1}$ by \cite[Theorem 3.1]{Pic 10}. In view of (3), they are gotten, for each $S_i\subset S_{i+1}$, by the elements of the chain $[R_M,S_M]$ which are of the form $R_{M,i+1}$, that is for $i< n_M$. Then, $\ell[S_i,S_{i+1}]=|\{R_{M,i+1}\mid i< n_M\}|$, giving $\ell[R,S]=\sum_{M\in\mathrm{MSupp}(S/R)}(n_M-1)$.
\end{proof} 

 In Corollary \ref{8.31}, we proved that if $[R,S]$ is a chain, then $[R,S]=\{S_i\}_{i=0}^n$. 
We introduce the following property:

\begin{definition} \label{8.4} An FCP 
extension  $R\subset S$ with Loewy series $\{S_i\}_{i=0}^n$ is said to satisfy the  property $(\mathcal P)$ (or is a $\mathcal P$-extension) if $[R,S]=\cup_{i=0}^{n-1}[S_i, S_{i+1}]$. 
 (The Loewy series gives a partition of $[R,S]$.)
\end{definition}

Here is an example of such  a $\mathcal P$-extension.

 \begin{example} \label{8.41}  In \cite[Example 5.17 (2)]{Pic 6}, we  gave the following example: Set $k:=\mathbb{Q}, \ L:=k[x]$, where $x:=\sqrt 3+\sqrt 2$ and    $k_i:=k[\sqrt i],\ i=2,3,6$. Then, $[k,L]=\{k,k_1,k_2,k_3,L\}$ and  the following diagram holds:
$$\begin{matrix}
   {}  &        {}      & L             &       {}       & {}     \\
   {}  & \nearrow & \uparrow  & \nwarrow & {}     \\
k_1 &       {}       & k_2         &      {}        & k_3 \\
  {}   & \nwarrow & \uparrow & \nearrow & {}     \\
  {}   &      {}        & k             & {}             & {} 
\end{matrix}$$
so that $k\subset L$ is a non distributive extension of length 2 (because a diamond \cite[Theorem 4.7]{R}), with atoms $k_i,\ i=1,2,3$ so that $L=k_ik_j$, for any $i,j\in\{1,2,3\},\ i\neq j$. Then, $\mathcal S[k,L]=L=S_1,\ [k,L]=[k,S_1]$ and $k\subset L$ 
 is a $\mathcal P$-extension. 
\end{example} 
 
 Although many properties of $\mathcal P$-extensions  will be gotten for a distributive extension, we begin to give two results for  a non necessarily distributive $\mathcal P$-extension.

\begin{proposition} \label{8.42} Let $R\subset S$ be an FIP
 extension with Loewy series $\{S_i\}_{i=0}^n$. Then, $|[R,S]|\geq \sum_{i=0}^{n-1}|[S_i,S_{i+1}]|+1-n$, with equality if and only if $R\subset S$ 
  is a $\mathcal P$-extension. 
\end{proposition}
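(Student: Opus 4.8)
The plan is to compute exactly the cardinality of the union $U:=\bigcup_{i=0}^{n-1}[S_i,S_{i+1}]$ and then to observe that $U\subseteq[R,S]$, with equality holding precisely when $R\subset S$ is a $\mathcal P$-extension (Definition \ref{8.4}).

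First I would record that the Loewy series is strictly increasing up to $S$: for each $i\in\{0,\ldots,n-1\}$ we have $S_i\neq S$ (by minimality of $n$), so $[S_i,S]$ has at least one atom and hence $S_i\subsetneq S_{i+1}=\mathcal S[S_i,S]$. Next I would analyze the pairwise intersections of the $n$ intervals $[S_i,S_{i+1}]$. For consecutive indices, any $T\in[S_i,S_{i+1}]\cap[S_{i+1},S_{i+2}]$ satisfies $S_{i+1}\subseteq T\subseteq S_{i+1}$, so $[S_i,S_{i+1}]\cap[S_{i+1},S_{i+2}]=\{S_{i+1}\}$. For $j\geq i+2$, strict monotonicity gives $S_{i+1}\subsetneq S_j$, and any $T$ in $[S_i,S_{i+1}]\cap[S_j,S_{j+1}]$ would satisfy $T\subseteq S_{i+1}\subsetneq S_j\subseteq T$, which is impossible; thus non-consecutive intervals are disjoint. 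Finally, a triple intersection involves (necessarily) consecutive intervals, and $[S_i,S_{i+1}]\cap[S_{i+1},S_{i+2}]\cap[S_{i+2},S_{i+3}]$ would force $S_{i+1}=T=S_{i+2}$, again impossible, so all triple intersections are empty.

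With this, inclusion–exclusion applied to the $n$ sets $[S_i,S_{i+1}]$ collapses to
\[
|U|=\sum_{i=0}^{n-1}|[S_i,S_{i+1}]|-\sum_{i=0}^{n-2}\bigl|[S_i,S_{i+1}]\cap[S_{i+1},S_{i+2}]\bigr|
=\sum_{i=0}^{n-1}|[S_i,S_{i+1}]|-(n-1).
\]
Since $U\subseteq[R,S]$, this yields $|[R,S]|\geq\sum_{i=0}^{n-1}|[S_i,S_{i+1}]|+1-n$, and equality holds if and only if $[R,S]=U=\bigcup_{i=0}^{n-1}[S_i,S_{i+1}]$, which is exactly the definition of $R\subset S$ being a $\mathcal P$-extension.

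No serious obstacle is expected here; the only points demanding care are the strict monotonicity of the Loewy series (needed to separate non-consecutive intervals) and the verification that triple overlaps vanish, so that the inclusion–exclusion sum telescopes to the clean count above.
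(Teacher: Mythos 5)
Your proposal is correct and takes essentially the same approach as the paper: compute the cardinality of $\bigcup_{i=0}^{n-1}[S_i,S_{i+1}]$ by analyzing how the intervals overlap (consecutive ones share exactly $\{S_{i+1}\}$, non-consecutive ones are disjoint), then observe the union sits inside $[R,S]$ with equality precisely in the $\mathcal P$-extension case. You simply make explicit a few points the paper leaves implicit (strict monotonicity of the Loewy series and vanishing of triple intersections), but the argument is the same.
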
 

\begin{proof} Since $ \cup_{i=0}^{n-1}[S_i,S_{i+1}]\subseteq[R,S]$, we get that $ |\cup_{i=0}^{n-1}[S_i,S_{i+1}]|\leq|[R,S]|$. But $\{S_i\}_{i=0}^n$ is an ascending chain, so that $[S_{i-1},S_i]\cap[S_i,S_{i+1}]=\{S_i\}$ for each $i\in\mathbb N_{n-1}$ and $[S_j,S_{j+1}]\cap[S_i,S_{i+1}]=\emptyset$ for any $(i,j)$ such that $j\neq i,i-1,i+1$. Then, $ |\cup_{i=0}^{n-1}[S_i,S_{i+1}]|=\sum_{i=0}^{n-1}|[S_i,S_{i+1}]|+1-n$, since there are $n-1$ elements $S_i$ common to two distinct subsets $[S_j,S_{j+1}]$. The equality holds if and only if  $ \cup_{i=0}^{n-1}[S_i,S_{i+1}]=[R,S]$  if and only if $R\subset S$ satisfies property ($\mathcal P$). 
\end{proof}

\begin{theorem} \label{8.5} Let $R\subset S$ be an FCP $\mathcal P$-extension with Loewy series $\{S_i\}_{i=0}^n$. Then $R\subset S$ is  distributive if and only if $S_i\subset S_{i+1}$ is Boolean for each  $0 \leq i\leq n-1$.
  If these conditions hold, then $R\subset S$ has FIP.
\end{theorem}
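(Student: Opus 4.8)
The strategy is to treat the two implications separately; the ``only if'' direction and the FIP statement are immediate, while the ``if'' direction carries the content. Assume first that $R\subset S$ is distributive. Then Proposition~\ref{8.3} says exactly that $S_i\subset S_{i+1}$ is Boolean for every $i\in\{0,\dots,n-1\}$; this implication needs neither property $(\mathcal P)$ nor anything beyond Proposition~\ref{8.3}. Moreover a distributive FCP extension has FIP by \cite[Theorem 4.28]{R}, so once distributivity is available --- in either direction --- the last assertion of the theorem follows.

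Now assume that each $S_i\subset S_{i+1}$ is Boolean, hence distributive, and aim to show that $R\subset S$ is distributive. The key observation is that property $(\mathcal P)$ displays $[R,S]$ as the ordinal sum of the intervals $[S_i,S_{i+1}]$ stacked along the chain $S_0\subset\dots\subset S_n$, the top $S_{i+1}$ of each interval being the bottom of the next. Indeed, since the $S_i$ are strictly increasing and $[R,S]=\bigcup_{i=0}^{n-1}[S_i,S_{i+1}]$, every $T\in[R,S]$ lies in some $[S_i,S_{i+1}]$, whence $S_j\subseteq S_i\subseteq T$ for $j\le i$ and $T\subseteq S_{i+1}\subseteq S_j$ for $j\ge i+1$; thus $T$ is comparable with every $S_j$. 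In particular, for $T\in[S_i,S_{i+1}]$ and $U\in[S_{i'},S_{i'+1}]$ with $i<i'$ one has $T\subseteq U$, so $T\cap U=T$ and $TU=U$. Hence every intersection and every product of elements of $[R,S]$ is either computed inside a single interval $[S_i,S_{i+1}]$ or governed by these trivial cross-interval relations, which is precisely the ordinal-sum structure.

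It then remains to invoke the standard lattice fact that an ordinal sum of distributive lattices is distributive --- verified, say, by showing that no sublattice of such a sum is a diamond $M_3$ or a pentagon $N_5$ (a sublattice meeting two distinct intervals splits, at an appropriate $S_k$, as a disjoint union $P\sqcup Q$ with every element of $P$ below every element of $Q$, and neither $M_3$ nor $N_5$ admits such a splitting into proper sublattices), or by a short case analysis of $T\cap(UV)=(T\cap U)(T\cap V)$ according to the intervals containing $T,U,V$, the mixed cases collapsing at once by the relations above. Since each summand $[S_i,S_{i+1}]$ is Boolean, hence distributive, $[R,S]$ is distributive, i.e.\ $R\subset S$ is distributive. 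Finally, a Boolean extension of finite length is FIP --- its lattice is the power set of the finite set of its atoms, by \cite[Theorem 3.1]{Pic 10} --- so each $[S_i,S_{i+1}]$ is finite and $[R,S]$, being a finite union of finite intervals, is finite; thus $R\subset S$ has FIP.

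The only genuinely non-formal step I foresee is the identification of $[R,S]$ with the ordinal sum of its Loewy intervals: this is where both the $(\mathcal P)$-partition and the strict monotonicity of $\{S_i\}$ are used. Once that is in hand, the distributivity of $[R,S]$ and the FIP conclusion are formal, resting on Proposition~\ref{8.3}, \cite[Theorem 4.28]{R}, and the elementary lemma on ordinal sums.
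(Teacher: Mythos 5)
Your proof is correct, but the ``if'' direction takes a genuinely different route from the paper. The paper verifies distributivity by establishing the cancellation law of \cite[Theorem 1.6, page 9]{Cal} --- that $UT=VT$ and $U\cap T=V\cap T$ force $U=V$ --- via a case analysis on which intervals $[S_i,S_{i+1}[$, $[S_j,S_{j+1}[$, $[S_k,S_{k+1}[$ contain $T$, $U$, $V$. You instead observe that property $(\mathcal P)$ together with the strict monotonicity of $\{S_i\}$ exhibits $[R,S]$ as the ordinal (linear) sum of the Boolean, hence distributive, lattices $[S_i,S_{i+1}]$, and then invoke the closure of distributive lattices under ordinal sums, which you justify by ruling out $M_3$ and $N_5$ sublattices. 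Both routes ultimately perform a short case analysis (the ordinal-sum lemma needs one too, to show any copy of $M_3$ or $N_5$ must land in a single summand, using that the gluing points $S_{i}$ are comparable to everything), but yours isolates the structural fact --- ordinal-sum decomposition --- more cleanly and is more modular. You also obtain FIP more elementarily, by counting: each $[S_i,S_{i+1}]$ is a finite-length Boolean lattice, hence the power set of its finitely many atoms by \cite[Theorem 3.1]{Pic 10}, and a finite union of finite intervals is finite; the paper instead cites \cite[Theorem 4.28]{R} on distributive FCP extensions. Both are correct; your version has the minor advantage of not re-using the distributivity just proved.
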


\begin{proof} One part of the proof is Proposition \ref{8.3}. 

Conversely, assume that $R\subset S$ satisfies property $(\mathcal P)$ and that $S_i\subset S_{i+1}$ Boolean for each $i\leq n$.  Let $T,U,V\in [R,S]$ be such that $UT=VT$ and $U\cap T=V\cap T$. We claim that  that $U=V$.  This will prove that $R\subset S$ is  distributive by  \cite[Theorem 1.6, page 9]{Cal}. The result is obvious if $S\in\{U,V,T\}$. Then, choose $i,j,k\in\{0,\ldots,n-1\}$ such that $T\in[S_i,S_{i+1}[,\ U\in[S_j,S_{j+1}[,\ V\in[S_k, S_{k+1}[$. Set $l:=\sup(j+1,k+1)$ and $l':=\inf(j,k)$. This yields that $l'<l$ and $S_{l'}\subseteq U,V\subset S_l$.

Consider the different cases:

(1) If $i=j=k$, then $U=V$, because $T,U,V\in[S_i, S_{i+1}]$, which is Boolean, and then distributive.

(2) Assume $i\geq l$, so that $U,V\subset S_l\subseteq  S_i\subseteq T$. Then, $U=U\cap T=V\cap T=V$.

(3) Assume that $i< l'$, so that $i+1\leq  l'$ which implies $T\subset S_{i+1}\subseteq S_{l'}\subseteq U,V$. Then, $U=UT=VT=V$.

(4) The last case  to consider (which has two subcases) is when $l'\leq i<l\ (*)$. There is no harm to assume $j\leq k$. In this case, $l'=j$ and $l=k+1$, so that $(*)$ yields $j\leq i\leq k$.  

If $j= i\leq k$, we can take $i<k$ because of (1), and then $i+1\leq k$. It follows that $U,T\subset S_{i+1}\subseteq S_k\subseteq V$. Then, $ V=VT=UT$ and $T=T\cap V=T\cap U$ leads to $T\subseteq U$, whence $V=UT=U$.

If $j<i\leq k$, then, by $j+1\leq i$, we obtain $U\subset S_{j+1}\subseteq S_i\subseteq T$ which gives $U\cap T=U=V\cap T\subset S_i\subseteq  S_k\subseteq V$, so that $S_i\subseteq T\cap V=U$, a contradiction which shows that this case does not occur. 

To conclude, $U=V$ in each case and $R\subset S$ is  distributive.

 The last result holds since an FCP distributive extension has FIP. 
\end{proof}

\begin{corollary} \label{8.50} Let $R\subset S$ be a distributive  FCP (hence FIP) $\mathcal P$-extension, with Loewy series $\{S_i\}_{i=0}^n$ and $T\in]R,S[$.There is some  $k\in\{0,\ldots,n-1\}$, such that $T\in[S_k,S_{k+1}[$ and then $\{S_i\}_{i=0}^k\cup\{T\}$ is the Loewy series of  $R\subset T$ and $\{T\}\cup\{S_i\}_{i=k+1}^n$  is the Loewy series of  $T\subset S$.  Moreover, $R\subset T$ and $T\subset S$  
 are  $\mathcal P$-extensions. 
\end{corollary}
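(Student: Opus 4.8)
The plan is to work with the Loewy series directly and exploit the $\mathcal P$-property together with Corollary~\ref{8.21}, which controls how socles restrict to subextensions. Let $T\in\,]R,S[$. Since $R\subset S$ is a $\mathcal P$-extension with Loewy series $\{S_i\}_{i=0}^n$, we have $[R,S]=\cup_{i=0}^{n-1}[S_i,S_{i+1}]$, so there is some $k\in\{0,\ldots,n-1\}$ with $T\in[S_k,S_{k+1}]$; because $T\neq S$ we may take $k$ minimal with $T\in[S_k,S_{k+1}]$, equivalently $S_k\subseteq T\subset S_{k+1}$, i.e. $T\in[S_k,S_{k+1}[$. Note $S_k\subseteq T$ forces the first $k$ terms of any reasonable Loewy series of $R\subset T$ to be $S_0,\ldots,S_k$, provided one checks that $\mathcal S[S_i,T]=S_{i+1}$ for $i<k$; this is where Corollary~\ref{8.21} enters, applied inside the distributive FCP extension $S_i\subset S$ (or $S_i\subset S_{i+1}$): it gives $\mathcal S[S_i,T]=\mathcal S[S_i,S]\cap T=S_{i+1}\cap T=S_{i+1}$ since $S_{i+1}\subseteq S_k\subseteq T$.

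First I would establish the Loewy series of $R\subset T$. By the above, $S_0\subset\cdots\subset S_k$ are the first $k$ steps. It remains to see that $\mathcal S[S_k,T]=T$, i.e. $\pounds[S_k,T]=1$, i.e. $S_k\subset T$ is Boolean. But $S_k\subset S_{k+1}$ is Boolean by Proposition~\ref{8.3} (as $R\subset S$ is distributive FCP), $S_k\subseteq T\subseteq S_{k+1}$, and an intermediate extension of a Boolean extension is Boolean by \cite[Proposition 3.11]{Pic 10}; hence $S_k\subset T$ is Boolean and $\mathcal S[S_k,T]=T$. Therefore the Loewy series of $R\subset T$ is exactly $\{S_i\}_{i=0}^k\cup\{T\}$, of Loewy length $k+1$. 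Dually, for $T\subset S$: I claim its Loewy series is $\{T\}\cup\{S_i\}_{i=k+1}^n$. The first step is $\mathcal S[T,S]$; I would argue $\mathcal S[T,S]=S_{k+1}$ by showing that $T\subset S_{k+1}$ is Boolean (intermediate in the Boolean $S_k\subset S_{k+1}$, again by \cite[Proposition 3.11]{Pic 10}) and that no atom of $[T,S]$ escapes $S_{k+1}$, which follows since any atom $A$ of $[T,S]$ gives $S_k\subset T\subseteq A$ with... more carefully: since $T\subset S_{k+1}$ is Boolean of Loewy length $1$ and $T\ne S_{k+1}$, $S_{k+1}=\mathcal S[T,S_{k+1}]\subseteq\mathcal S[T,S]$, while any atom $A$ of $[T,S]$ satisfies $A\subseteq\mathcal S[T,S]$; to see $\mathcal S[T,S]\subseteq S_{k+1}$ one uses that $S_{k+1}=\mathcal S[S_k,S]$ is essential in $[S_k,S]$ (Proposition~\ref{8.111}) together with $\mathcal S[T,S]=\mathcal S[S_k,S]\cap$ (something) — concretely, apply Corollary~\ref{8.21} to the distributive FCP extension $S_k\subset S$ with the intermediate algebra $T$: it yields $\mathcal S[S_k,T]=\mathcal S[S_k,S]\cap T$, which we already used; for $\mathcal S[T,S]$ I instead invoke that $S_{k+1}\subseteq \mathcal S[T,S]$ and $\mathcal S[T,S]$, being the socle, is a product of atoms of $[T,S]$, each of which lies in $S_{k+1}$ because $T\subset S_{k+1}$ is Boolean and contains all of them — this last point needs the observation that an atom $A$ of $[T,S]$ with $A\not\subseteq S_{k+1}$ would make $T\subset T A\subseteq S$ a minimal step with $TA\cap S_{k+1}=T$, contradicting essentiality of $S_{k+1}/T$ in $S/T$ (which holds since $T\subset S_{k+1}$ is Boolean hence $S_{k+1}$ is essential in $[T,S]$, being a product of all atoms of $[T,S]$... ). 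Having pinned $\mathcal S[T,S]=S_{k+1}$, an immediate induction using $\mathcal S[S_i,S]=S_{i+1}$ for $i\ge k+1$ (Definition~\ref{8.1}) finishes: the Loewy series of $T\subset S$ is $\{T\}\cup\{S_i\}_{i=k+1}^n$, of Loewy length $n-k$.

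It remains to see that $R\subset T$ and $T\subset S$ are themselves $\mathcal P$-extensions. For $R\subset T$: using the Loewy series just computed, $\cup_{i=0}^{k-1}[S_i,S_{i+1}]\cup[S_k,T]$ must equal $[R,T]$. But $[R,T]\subseteq[R,S]=\cup_{i=0}^{n-1}[S_i,S_{i+1}]$, and if $U\in[R,T]$ lies in some $[S_j,S_{j+1}]$ with $j\ge k$, then $S_j\subseteq U\subseteq T\subset S_{k+1}$ forces $j=k$ and $U\in[S_k,T]$; if $j<k$ then $U\in[S_j,S_{j+1}]$ is already one of our pieces. Hence $[R,T]=\cup_{i=0}^{k-1}[S_i,S_{i+1}]\cup[S_k,T]$, so $R\subset T$ is a $\mathcal P$-extension. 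Symmetrically, for $T\subset S$: if $U\in[T,S]$ lies in $[S_j,S_{j+1}]$, then either $j\ge k+1$, giving $U\in[S_j,S_{j+1}]$ directly, or $j\le k$, and then $S_j\subseteq T$ (since $j\le k$, $S_j\subseteq S_k\subseteq T$) while $U\subseteq S_{j+1}\subseteq S_{k+1}$, so $U\in[T,S_{k+1}]$; thus $[T,S]=[T,S_{k+1}]\cup\cup_{i=k+1}^{n-1}[S_i,S_{i+1}]$, and $T\subset S$ is a $\mathcal P$-extension.

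The main obstacle is the second paragraph: proving $\mathcal S[T,S]=S_{k+1}$ cleanly. The containment $S_{k+1}\subseteq\mathcal S[T,S]$ is easy (intermediate Boolean extension), but the reverse containment — that every atom of $[T,S]$ stays inside $S_{k+1}$ — needs the essentiality argument, and one must be careful that essentiality of $S_{k+1}$ in the lattice $[T,S]$ is exactly what a Boolean extension $T\subset S_{k+1}$ provides through Proposition~\ref{8.111} applied to $[S_k,S]$ intersected with $T$ via Corollary~\ref{8.21}; alternatively one can bypass essentiality by noting that by the already-established $\mathcal P$-decomposition of $[R,S]$ every $U\in\,]T,S]$ with $U\subseteq S_{k+1}$ and $T\subset U$ minimal lies in $[S_k,S_{k+1}]$, so its atoms are atoms of the Boolean $S_k\subset S_{k+1}$ lying above $T$, and the product of all of them is $\mathcal S[T,S_{k+1}]=S_{k+1}$ — while any atom $A$ of $[T,S]$ with $A\not\subseteq S_{k+1}$ would lie in some $[S_j,S_{j+1}]$ with $j>k$ by the $\mathcal P$-property, forcing $S_j\subseteq A$ hence $S_{k+1}\subseteq S_j\subseteq A$, so $A$ is not an atom of $[T,S]$ after all since $T\subsetneq S_{k+1}\subseteq A$ with $T\subset S_{k+1}$ not minimal unless $S_{k+1}=A$; tracking this dichotomy carefully is the crux. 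The rest is bookkeeping with the $\mathcal P$-decomposition.
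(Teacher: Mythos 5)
Your proof is correct and takes essentially the same route as the paper: Corollary~\ref{8.21} and \cite[Proposition 3.11]{Pic 10} to pin down the terms of the Loewy series of $R\subset T$, then the $\mathcal P$-decomposition of $[R,S]$ to rule out an atom of $[T,S]$ lying outside $[T,S_{k+1}]$ (your ``alternative'' bypass, which is exactly the paper's argument), followed by the bookkeeping for the $\mathcal P$-property of the two pieces. The one small slip is at the start: you should take $k$ \emph{maximal} (not minimal) with $T\in[S_k,S_{k+1}]$, or just note that the half-open intervals $[S_i,S_{i+1}[$ partition $[R,S[$; with ``minimal'' your normalization fails whenever $T=S_j$ for some $1\le j\le n-1$, since then the minimal index gives $T=S_{k+1}$, which lies outside $[S_k,S_{k+1}[$.
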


\begin{proof} Since $R\subset S$ is a $\mathcal P$-extension,
 there is some $k\in\{0,\ldots,n-1\}$  such that $T\in[S_k,S_{k+1}[$, so that $S_k\subseteq T\subset S_{k+1}$. Let $\{T_i\}_{i=0}^m$ (resp. $\{T'_i\}_{i=0}^r$)  be the Loewy series of  $R\subset T$ (resp.   $T\subset S$). By definition of the socle of an extension, we have obviously $S_i=T_i$ for each $i\in\{0,\ldots,k\}$. Moreover, $S_k\subset S_{k+1}$ is  Boolean  by Proposition \ref{8.3}, which implies that $T$ is a product of atoms of $S_k\subset S$  \cite[Theorem 3.1]{Pic 10}. In fact, $T$ is a product of the atoms of $S_k\subset S$ contained in $T$, so that $T$ is the product of atoms of $S_k\subset T$, giving $T=\mathcal S[S_k,T]$. Then, $T_{k+1}=T$ and $m=k+1$. Now, $T=T'_0$. From \cite[Proposition 3.11]{Pic 10}, we deduce that  $T\subset S_{k+1}$ is also  Boolean, and $S_{k+1}$ is the product of the atoms of $[T, S_{k+1}]$. We claim that $ S_{k+1}=\mathcal S[T,S]$. Deny, so that there exists some atom $A$ of $[T,S]$ which is not in $ [T, S_{k+1}]$. But $T\subset A$ is a minimal extension. Since 
  $R\subset S$ is a $\mathcal P$-extension, we get that $A\in\cup_{i=k+1}^{n-1}[S_i,S_{i+1}]$ and then  $T\subset  S_{k+1}\subset A$, a contradiction. Then, $ S_{k+1}=T'_1$ and the other terms of the Loewy series of $T\subset S$ are the $S_i$ for $i\in\{k+2,\ldots,n\}$. The last property follows easily.
\end{proof} 

In the next result, we use the radical $\mathcal R$ of an extension, (Definition~\ref{8.1} (2)). 
\begin{corollary} \label{8.51} Let $R\subset S$ be a distributive  FCP 
$\mathcal P$-extension
 (hence FIP), with Loewy series $\{S_i\}_{i=0}^n$. Then   $S_i=\mathcal R[S_i, S_{i+1}]$  for each  $i\in\{0,\ldots,n-1\}$ and $S_{n-1}=\mathcal R[R, S]$. 
\end{corollary}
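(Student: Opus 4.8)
The plan is to work inside each Boolean layer $S_i \subset S_{i+1}$ and show that the radical of that layer — the intersection of its co-atoms — collapses to the bottom $S_i$, and dually to identify $S_{n-1}$ with $\mathcal R[R,S]$. First I would recall from Proposition~\ref{8.3} that, since $R\subset S$ is distributive, each layer $S_i \subset S_{i+1}$ is a Boolean extension; hence $[S_i,S_{i+1}]$ is a finite Boolean lattice. In a finite Boolean lattice $B$ with more than one element, the meet of all co-atoms is the least element: indeed every atom has a complement which is a co-atom, and the meet of all co-atoms is the complement of the join of all atoms, which is the top; so the meet of all co-atoms is the bottom. This gives $\mathcal R[S_i,S_{i+1}] = S_i$ immediately when $S_i \subsetneq S_{i+1}$, which holds for all $i \in \{0,\dots,n-1\}$ by definition of the Loewy length. (If one prefers, one can invoke the self-dual description of Boolean extensions from \cite[Theorem 3.1]{Pic 10} together with \cite[Proposition 3.11]{Pic 10} rather than re-deriving the lattice fact.)

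Next I would handle $S_{n-1} = \mathcal R[R,S]$, which is where the $\mathcal P$-hypothesis enters. By Definition~\ref{8.1}(2), $\mathcal R[R,S] = \bigcap_{A \in \mathcal{CA}} A$, the intersection of the co-atoms of $[R,S]$. The point is to show the co-atoms of $[R,S]$ are exactly the co-atoms of the top layer $[S_{n-1},S]$. For the easy inclusion: if $T \subset S$ is minimal and $T \in [R,S]$, then since $R\subset S$ is a $\mathcal P$-extension, $T$ lies in some $[S_i,S_{i+1}]$; but $T \subset S$ minimal forces $S_{i+1} = S$, hence $i+1 = n$ and $T \in [S_{n-1},S]$. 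Conversely a co-atom of $[S_{n-1},S]$ is a fortiori a co-atom of $[R,S]$. Therefore $\mathcal R[R,S] = \mathcal R[S_{n-1},S] = S_{n-1}$ by the first part applied to the layer $i = n-1$. The main obstacle — really the only non-formal point — is the equality of the two co-atom sets, and it is precisely the $\mathcal P$-property that rules out a co-atom of $[R,S]$ that fails to contain $S_{n-1}$.

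Alternatively, and perhaps more cleanly, one can use Corollary~\ref{8.50}: for $T = S_{n-1}$ the corollary already tells us $\{S_{n-1}\} \cup \{S_i\}_{i=n-1}^{n} = \{S_{n-1}, S\}$ is the Loewy series of $S_{n-1}\subset S$, and that $S_{n-1}\subset S$ is a $\mathcal P$-extension with $S_{n-1}\subset S$ Boolean; then one only needs the Boolean-lattice fact above plus the observation that any $U \in [R,S]$ with $U \subsetneq S$ lies in some $[S_i,S_{i+1}[$ by $(\mathcal P)$, so a co-atom of $[R,S]$ is a co-atom of $[S_{n-1},S]$. Either route is short. I would write it as: fix $i \in \{0,\dots,n-1\}$; since $S_i \subsetneq S_{i+1}$ and $S_i\subset S_{i+1}$ is Boolean, the intersection of all co-atoms of $[S_i,S_{i+1}]$ equals $S_i$, i.e. $\mathcal R[S_i,S_{i+1}] = S_i$; then, using $(\mathcal P)$ and minimality, show $\mathcal{CA}([R,S]) = \mathcal{CA}([S_{n-1},S])$ and conclude $\mathcal R[R,S] = \mathcal R[S_{n-1},S] = S_{n-1}$.
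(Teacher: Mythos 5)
Your proof follows essentially the same approach as the paper: for the first assertion, the Boolean complementation (de Morgan) argument that the intersection of co-atoms of a Boolean layer $[S_i, S_{i+1}]$ is the bottom $S_i$; for the second, the $\mathcal P$-property to place every co-atom $T$ of $[R,S]$ in $[S_{n-1},S]$, whence $\mathcal R[R,S]=\mathcal R[S_{n-1},S]=S_{n-1}$. The one small imprecision is the inference ``$T\subset S$ minimal forces $S_{i+1}=S$'': if $T=S_{i+1}$ (possible precisely when $T=S_{n-1}$ and $S_{n-1}\subset S$ is minimal), one would have $S_{i+1}=T\neq S$ for that choice of $i$; the conclusion $T\in[S_{n-1},S]$ still holds, but the paper's phrasing avoids the case split by noting that $T$, lying in some $[S_j,S_{j+1}]$, is comparable with $S_{n-1}$, so minimality of $T\subset S$ directly gives $S_{n-1}\subseteq T$.
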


\begin{proof} In view of Theorem~\ref{8.5}, $S_i\subset S_{i+1}$ is Boolean for each  $i\in\{0,\ldots,n-1\}$. Then $S_i=\mathcal R[S_i, S_{i+1}]$  for each  $i\leq n-1$ by de Morgan's law \cite[Theorems 3.43 and 5.1]{R} and the equivalences of \cite [page 292]{St}.
   Indeed, since $S_i\subset S_{i+1}$ is Boolean, $S_i$ is the complement of $S_{i+1}$. Now, $S_{i+1}$ is the product of atoms of $[S_i, S_{i+1}]$, which implies that $S_i$ is the intersection of co-atoms of $[S_i, S_{i+1}]$; that is,  $\mathcal R[S_i, S_{i+1}]$. 

Since $R\subset S$ is a $\mathcal P$-extension, $[R,S]=\cup_{i=0}^{n-1}[S_i, S_{i+1}]$. Let $A\in\mathcal{CA}$. There is some $i\in\{0,\ldots,n-1\}$ with  $A\in[S_i, S_{i+1}]$, whence $A$ is comparable to any $S_i$. But $A\subset S$ is minimal, so that $S_{n-1}\subseteq A$ yields that $A$ is a co-atom of 
$[S_{n-1}, S]$.
 It follows that $S_{n-1}\subseteq\cap_{A\in\mathcal{CA}}A=\mathcal{R}[R,S]\subseteq\mathcal{R}[S_{n-1},S]=S_{n-1}$ 
 by 
 the first part of the proof.
\end{proof}

Proposition \ref{5.51} says that when $R\subseteq S$  has FCP, $T\in[R, S]$ is $\Pi$-irreducible if and only if either  $T=R$ or there is a unique $T' \in[R,S]$ such that $T'\subset T$ is  minimal.  For an FCP distributive  $\mathcal P$-extension, these elements can be characterized thanks to the Loewy series. In fact, the following theorem characterizes FCP distributive 
$\mathcal P$-extensions.

\begin{theorem} \label{8.6} Let $R\subset S$ be a distributive FCP 
 (hence FIP) extension with Loewy series $\{S_i\}_{i=0}^n$. Then, $R\subset S$ is a $\mathcal P$-extension  if and only if the following condition holds:  any $T\in[R,S]$ is $\Pi$-irreducible in $[R,S]$ if and only if there exists some $i\leq n-1$ such that $T$ is an atom of $[S_i, S_{i+1}]$. 
\end{theorem}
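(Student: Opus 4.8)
The plan is to prove both directions of the equivalence, where the ``forward" direction (assuming $(\mathcal P)$) is the substantial one. Throughout I will use Proposition \ref{8.3}, which guarantees that each $S_i \subset S_{i+1}$ is Boolean, and I will use Proposition \ref{5.51}, which says $T$ is $\Pi$-irreducible if and only if $T = R$ or there is a \emph{unique} $T' \in [R,S]$ with $T' \subset T$ minimal.

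First I would treat the converse (the ``if" direction of the outer equivalence, i.e.\ assuming the stated characterization of $\Pi$-irreducibles). I plan to show that this characterization forces $[R,S] = \cup_{i=0}^{n-1}[S_i, S_{i+1}]$. The key observation is that every $\Pi$-irreducible element of $[R,S]$ then lies in some $[S_i, S_{i+1}]$, and conversely, in a distributive (hence modular, finite-length) lattice every element is a product of $\Pi$-irreducible elements (the dual of the $\cap$-irreducible decomposition). Given $T \in [R,S]$, write $T$ as a product of $\Pi$-irreducibles; each such factor is an atom of some $[S_i, S_{i+1}]$ by hypothesis, hence lies between $S_i$ and $S_{i+1}$ for some $i$. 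I would then argue, using that $\{S_i\}$ is a chain and that $S_i \subset S_{i+1}$ is Boolean, that the product of atoms drawn from a single layer $[S_i, S_{i+1}]$ stays in $[S_i, S_{i+1}]$, and that combining factors from different layers $i < i'$ collapses into the higher layer (because $S_{i+1} \subseteq S_{i'}$, so the lower factor is absorbed). This places $T$ in a single $[S_i, S_{i+1}]$, giving $(\mathcal P)$.

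For the forward direction, assume $R \subset S$ is a $\mathcal P$-extension. I must show: $T$ is $\Pi$-irreducible $\iff$ $T$ is an atom of $[S_i, S_{i+1}]$ for some $i \le n-1$. For the ($\Leftarrow$) part: if $T$ is an atom of $[S_i, S_{i+1}]$, then $S_i \subset T$ is minimal; I claim $S_i$ is the unique element immediately below $T$ in $[R,S]$. Suppose $T' \subset T$ is minimal in $[R,S]$; since $R \subset S$ is a $\mathcal P$-extension, $T'$ lies in some layer $[S_j, S_{j+1}]$, and comparability of $T'$ with every $S_k$ together with $T' \subset T \subseteq S_{i+1}$ forces $j = i$, so $T' \in [S_i, T]$, hence $T' = S_i$ by minimality of $S_i \subset T$. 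Thus $T$ is $\Pi$-irreducible by Proposition \ref{5.51}. For the ($\Rightarrow$) part: let $T$ be $\Pi$-irreducible with $T \ne R$, so there is a unique $T'$ with $T' \subset T$ minimal. By $(\mathcal P)$, pick $i$ with $T \in [S_i, S_{i+1}]$. If $T = S_i$ is possible I would rule it out by noting $S_i = \mathcal S[S_{i-1},S]$ is a product of (at least two, generically) atoms of the Boolean extension $S_{i-1} \subset S_i$, hence not $\Pi$-irreducible unless $\ell[S_{i-1},S_i]=1$ — and in the degenerate cases one checks directly that $S_i$ is then itself an atom of $[S_{i-1},S_i]$. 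So assume $S_i \subset T$; since $S_i \subset S_{i+1}$ is Boolean, $T$ is a product of atoms of $[S_i, S_{i+1}]$, and if more than one atom were needed, say $A_1, A_2 \subseteq T$ distinct atoms, then in the Boolean (hence distributive) layer one produces two distinct minimal extensions below $T$, contradicting uniqueness of $T'$. Hence $T$ is a single atom of $[S_i, S_{i+1}]$.

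The main obstacle I anticipate is the careful bookkeeping in the forward direction when $T = S_i$ or when layers are ``thin" (i.e.\ $\ell[S_{i-1}, S_i] = 1$), and more generally making rigorous the repeatedly-used assertion that in a Boolean extension a product of two or more distinct atoms admits two or more distinct minimal sub-extensions — this rests on \cite[Theorem 3.1]{Pic 10} and \cite[page 292]{St} describing Boolean extensions via their atom lattices, and I would invoke those rather than reprove them. A secondary subtlety is ensuring that every $\Pi$-irreducible actually lands in some layer $[S_i, S_{i+1}]$ rather than straddling a boundary; this is exactly where the hypothesis $(\mathcal P)$ is used, and it must be applied before, not after, the uniqueness argument.
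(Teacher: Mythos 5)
Your proposal is correct. The backward direction is essentially the paper's argument: decompose an arbitrary $T$ into $\Pi$-irreducible factors (each, by hypothesis, an atom of some Loewy layer), take the maximal layer index $k$ among the factors, and observe that every factor from a lower layer is contained in $S_k$ and hence absorbed into any factor from layer $k$; the surviving product of atoms of $[S_k,S_{k+1}]$ lies in $[S_k,S_{k+1}]$. For the forward direction the paper, like you, works layer by layer using that each $S_i\subset S_{i+1}$ is Boolean; the tactical difference is that the paper argues directly with product decompositions (a $\Pi$-irreducible $T$ in a Boolean layer is a product of $m$ atoms by \cite[Theorem 3.1]{Pic 10}, and irreducibility forces $m=1$; conversely an atom $T$ of a layer cannot equal $UV$ with $U,V\subsetneq T$, shown by a short case analysis on the layers containing $U$ and $V$), whereas you route both halves through Proposition \ref{5.51}, characterizing $\Pi$-irreducibility by uniqueness of the immediate predecessor. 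Your route buys a more unified treatment and makes visible the degenerate possibility $T=S_i$ (equivalently $m=0$), which the paper's phrasing ``giving $m=1$'' silently passes over; the paper's route stays closer to the definition and does not need Proposition \ref{5.51}. Both arguments are sound.
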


\begin{proof} Assume first 
$R\subset S$ is a $\mathcal P$-extension. Let $T\in[R,S]$ be $\Pi$-irreducible in $[R,S]$. There is some $i\in\{0,\ldots,n-1\}$ such that $T\in [S_i, S_{i+1}]$, which is Boolean. It follows that $T$ is a product of 
 $m$
  atoms of $[S_i, S_{i+1}]$
   by \cite[Theorem 3.1]{Pic 10}.
 But, $T$ being $\Pi$-irreducible 
  in $[R,S]$ is a fortiori  irreducible in $[S_i, S_{i+1}]$, giving $m=1$, so that 
  $T$ is itself an atom of $[S_i, S_{i+1}]$. 
   Conversely, let $T$ be an atom of some  $[S_i, S_{i+1}]$. We show that $T$ is $\Pi$-irreducible in $[R,S]$. Deny, and let $U,V\in[R,S]\setminus\{T\}$ be such that $T=UV$, so that $U,V\subset T\subseteq S_{i+1}\ (*)$. Because of property $(\mathcal P)$, there exist $j,k\in\{0,\ldots,n-1\}$ such that $U\in[S_j, S_{j+1}[$ and $V\in[S_k, S_{k+1}[$. Moreover, $(*)$ implies that $j,k\leq i$. We cannot have $j=k=i$ since $T$ is an atom of $[S_i, S_{i+1}]$. Assume that only one $U,V$ is in $[S_i, S_{i+1}]$, for instance, $U\in[S_i, S_{i+1}]$, so that $i=j$. Then, $V\subset S_i\subseteq U\subset T$ leads to $UV=U\subset T$, a contradiction. For the remaining case $j,k<i$, we have $j+1,k+1\leq i$ and $U,V\not\in[S_i, S_{i+1}]$. We get that $UV\subseteq S_i\subset T$, again a contradiction. Then, $T$ is $\Pi$-irreducible.

Now, we show that $[R,S]=\cup_{i=0}^{n-1}[S_i, S_{i+1}]$ if the following condition holds: any $T\in[R,S]$ is $\Pi$-irreducible if and only if there exists some $i\in\{0,\ldots,n-1\}$ such that $T$ is an atom of $[S_i, S_{i+1}]$. Let $U\in [R,S]$, so that $U=\prod_{j=1}^mT_j$, 
 for some positive integer $m$, where $T_j$ is $\Pi$-irreducible for each $j$ by \cite[Proposition 2.9]{Pic 10}. The hypothesis gives that for each $j$, there exists 
 a unique 
 $i_j\in\{0,\ldots,n-1\}$ such that $T_j$ is an atom of $[S_{i_j}, S_{i_j+1}]$. 
 Set $k:=\sup\{i_j\mid j\in\mathbb N_m\},\ I_1:=\{j\in\mathbb N_m\mid i_j<k\}$ and $ I_2:=\{j\in\mathbb N_m\mid i_j=k\}$. In particular, $I_2\neq \emptyset$.
 Then,  for each $h\in I_1$ and for each $l\in I_2$, we have 
 $T_h\subseteq S_k\subset T_l\subseteq S_{k+1}$, so that $U=(\prod_{h\in I_1}T_h)(\prod_{l\in I_2}T_l)=\prod_{l\in I_2}T_l$. Then $U\in[S_k, S_{k+1}]$ and property $(\mathcal P)$ holds.
\end{proof}

 The following definitions are needed for our study.
  
  \begin{definition}\label{1.3} An integral extension $R\subseteq S$ is called {\it infra-integral} \cite{Pic 2} (resp.;  {\it subintegral} \cite{S}) if all its residual extensions $\kappa_R(P)\to \kappa_S(Q)$, (with $Q\in\mathrm {Spec}(S)$ and $P:=Q\cap R$) are isomorphisms (resp$.$; and the natural map $\mathrm {Spec}(S)\to\mathrm{Spec}(R)$ is bijective). An extension $R\subseteq S$ is called {\it t-closed} (cf. \cite{Pic 2}) if the relations $b\in S,\ r\in R,\ b^2-rb\in R,\ b^3-rb^2\in R$ imply $b\in R$. The $t$-{\it closure} ${}_S^tR$ of $R$ in $S$ is the smallest element $B\in [R,S]$  such that $B\subseteq S$ is t-closed and the greatest element  $B'\in [R,S]$ such that $R\subseteq B'$ is infra-integral. An extension $R\subseteq S$ is called {\it seminormal} (cf. \cite{S}) if the relations $b\in S,\ b^2\in R,\ b^3\in R$ imply $b\in R$. The {\it seminormalization} ${}_S^+R$ of $R$ in $S$ is the smallest element $B\in [R,S]$ such that $B\subseteq S$ is seminormal and the greatest  element $ B'\in[R,S]$ such that $R\subseteq B'$ is subintegral. 
  The {\it canonical decomposition} of an arbitrary ring extension $R\subset S$ is $R \subseteq {}_S^+R\subseteq {}_S^tR \subseteq \overline R \subseteq S$. 
 \end{definition}  
 
   Next proposition describes the link between the elements of the canonical decomposition and minimal extensions.

\begin{proposition}\label{1.31} \cite[Proposition 4.5]{Pic 6} Let there be an integral extension $R\subset S$ and a maximal chain $\mathcal C$ of $ R$-subextensions of $S$, defined by $R=R_0\subset\cdots\subset R_i\subset\cdots\subset R _n= S$, where each $R_i\subset R_{i+1}$ is  minimal.  The following statements hold: 

\begin{enumerate}
\item $R\subset S$ is subintegral if and only if each $R_i\subset R_{i+1}$ is  ramified. 

\item $R\subset S$ is seminormal and infra-integral if and only if each  $R_i\subset R_{i+1}$ is decomposed. 

\item $R \subset S$ is t-closed if and only if  each $R_i\subset R_{i+1}$ is inert. 
\end{enumerate}
If either (1) or (3) holds, then $\mathrm {Spec}(S)\to\mathrm{Spec}(R)$ is bijective.
\end{proposition}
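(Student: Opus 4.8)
The plan is to prove Proposition~\ref{1.31} by exhibiting, for each of the three statements, one implication as a direct consequence of Theorem~\ref{minimal} together with the definitions in Definition~\ref{1.3}, and the converse by a Jordan--H\"older / induction-along-the-chain argument, using the fact that the elements ${}_S^+R$, ${}_S^tR$ of the canonical decomposition are characterized by extremal properties among the intermediate algebras.

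First I would handle the ``easy'' direction of each item. Suppose every $R_i\subset R_{i+1}$ in the chain $\mathcal C$ is ramified. Each such minimal extension is subintegral: in the ramified case of Theorem~\ref{minimal} the conductor $M$ is maximal in $R_i$, there is a unique $M'\in\mathrm{Max}(R_{i+1})$ above it with $R_i/M\xrightarrow{\sim} R_{i+1}/M'$ (so the residual extension is trivial), and ${M'}^2\subseteq M$ forces $\mathrm{Spec}(R_{i+1})\to\mathrm{Spec}(R_i)$ to be bijective; since subintegrality is transitive along a finite chain, $R\subset S$ is subintegral. The same bookkeeping gives (2): a decomposed minimal extension is infra-integral (both residual maps are isomorphisms) and seminormal (it is not ramified, hence the seminormalization of $R_i$ in $R_{i+1}$ cannot be all of $R_{i+1}$, so $R_i\subset R_{i+1}$ seminormal), and (3): an inert minimal extension is t-closed because its residual extension is a minimal field extension, not an isomorphism, so nothing infra-integral sits strictly above $R_i$ in $R_{i+1}$; again transitivity along $\mathcal C$ propagates the property from the one-step case to $R\subset S$. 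The final sentence of the proposition is immediate from Theorem~\ref{minimal}, since both the ramified and inert cases have a single maximal ideal of $R_{i+1}$ lying over the crucial ideal.

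For the converse direction of (1), assume $R\subset S$ is subintegral and take any maximal chain $\mathcal C$. Because subintegrality passes to subextensions and quotients, each $R_i\subset R_{i+1}$ is a minimal subintegral extension; a minimal extension is either a flat epimorphism or one of the three integral types of Theorem~\ref{minimal}, and among these only the ramified type is subintegral (the inert and decomposed types change a residual field or the spectrum), so each $R_i\subset R_{i+1}$ is ramified. For (3), if $R\subset S$ is t-closed then so is each subextension; a t-closed minimal extension must be inert, since ramified and decomposed minimal extensions are infra-integral hence not t-closed, and a minimal flat epimorphism is excluded because $R\subset S$ is integral. For (2), if $R\subset S$ is seminormal and infra-integral, each $R_i\subset R_{i+1}$ inherits both properties; being infra-integral it is integral and not inert, being seminormal it is not ramified, so it is decomposed.

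The main obstacle is making the ``inheritance'' steps rigorous without circularity: one needs that each of the properties subintegral, seminormal, infra-integral, t-closed behaves well with respect to passing to an intermediate ring $R_i\subseteq R_{i+1}$ sitting inside $R\subseteq S$, and also that in a minimal extension the property holds globally as soon as it fails to be violated locally. I expect to invoke here the standard facts about the canonical decomposition $R\subseteq {}_S^+R\subseteq {}_S^tR\subseteq\overline R\subseteq S$: e.g.\ $R\subset S$ is subintegral iff $S={}_S^+R$, t-closed iff $R={}_S^tR$, and these equalities restrict along subextensions. Granting those (they are in \cite{Pic 2,S,Pic 6}, cited in Definition~\ref{1.3} and in the statement itself), the argument is just a finite induction on $\ell[R,S]$ combined with the four-way classification of minimal extensions, and the three cases are genuinely parallel.
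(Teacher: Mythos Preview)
The paper does not give its own proof of Proposition~\ref{1.31}; the result is simply quoted from \cite[Proposition 4.5]{Pic 6}, so there is nothing in the present paper to compare your argument against. That said, your outline is the standard one and is correct in substance.

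The only place worth sharpening is the inheritance step you already flag. For subintegrality and infra-integrality the passage to each link $R_i\subset R_{i+1}$ is immediate from the definitions: factor the residue-field maps and the map on spectra through $R_i$ and $R_{i+1}$, and use that a composite of injective field maps which is an isomorphism forces each factor to be an isomorphism (and similarly for bijectivity of $\mathrm{Spec}$). For seminormality and t-closedness, however, only the \emph{lower} inheritance ($R\subset S$ has the property $\Rightarrow R\subset T$ has it, for $T\in[R,S]$) follows directly from the elementwise conditions in Definition~\ref{1.3}. To conclude that $R_i\subset R_{i+1}$ is seminormal (resp.\ t-closed) you need the \emph{upper} inheritance, namely that $R_i\subset S$ is still seminormal (resp.\ t-closed); this is where the references you invoke (\cite{S} for the seminormalization, \cite{Pic 2} for the t-closure) do real work, via the compatibility of ${}_S^+(-)$ and ${}_S^t(-)$ with passage to an intermediate ring. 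Once that is in place, your induction along the chain and the trichotomy of Theorem~\ref{minimal} finish the proof exactly as you describe.
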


\begin{example} \label{8.7} We now give three examples of an FCP distributive, 
 hence FIP
 (and not Boolean) extension where  property $(\mathcal P)$ holds or not. 

(1) Set $G:=\mathbb Z/12\mathbb Z$, which is a cyclic group, and let $k\subset L$ be a cyclic extension with Galois group $G$. The proper subgroups of $G$ are $2G,3G,4G$ and $6G$, so that the lattice $\mathcal G$ of subgroups of $G$ is $\{0,2G,3G,4G,6G,G\}$, which is distributive \cite[Exercise 15, page 125]{R}. Using the order isomorphism of lattices $\psi:{\mathcal G}\to [k,L]$ defined by $\psi(H):=$Fix$(H)$, we obtain the following lattice $[k,L]=\{k,$Fix$(iG),L\mid i=2,3,4,6\}$. Set $L_i:=\psi(iG)$. We have the following diagram: 

$$\begin{matrix}
   {}  &        {}      & L_2 &       \to      & L_4  & {}            & {}  \\
   {}  & \nearrow & {}     & \searrow & {}       & \searrow & {} \\
k     &       {}       & {}     &      {}       & L_6    & \to          & L   \\
  {}   & \searrow & {}    & \nearrow & {}        &  {}          & {}  \\
  {}   &      {}       & L_3 & {}            & {}         & {}          & {}
\end{matrix}$$

Then, $k\subset L$ is distributive but does not satisfy property $(\mathcal P)$. Indeed, $L_6$ is the socle of $k\subset L$, because $L_2$ and $L_3$ are the atoms of $k\subset L$, with $L_6\subset L$ minimal, so that  $S_0=k,\ S_1=L_6$ and $S_2=L$. Moreover, $L_4\not\in[S_0,S_1]\cup[S_1,S_2]$. In particular, we cannot apply Corollary \ref{8.51}. Indeed,  $\mathcal R[k,L]=L_4\cap L_6\neq S_1=L_6$.

 This example shows that the results of Theorem \ref{8.5} hold even if property $(\mathcal P)$ is not satisfied, since $S_0\subset S_1$ and $S_1\subset S_2$ are Boolean. This also shows that in a distributive extension,  a $\Pi$-irreducible element is not necessarily an atom of some $[S_j,S_{j+1}]$ (see $L_4$), and an atom of some $[S_j,S_{j+1}]$ is not necessarily $\Pi$-irreducible  (see $L$).

(2) An obvious example of a distributive 
 $\mathcal P$-extension
 $R\subset S$ is when $[R,S]$ is a chain.

(3) We give here a  more involved example of a distributive  $\mathcal P$-extension. Take $k\subset L$  a finite separable field extension of degree 6, such that $[k,L]$ is a Boolean algebra with $[k,L]=\{k,L_1,L_2,L\}$ and $\ell[k,L]=2$ \cite[Example 4.17]{Pic 10}. Set $S:=L[X]/(X^2)$, so that $L\subset S$ is a minimal ramified extension and $S$ is a local ring, with maximal ideal $M$. Set $T:={}_S^tk$ which is a local ring 
 because $k\subset S$ is a finite integral extension. 
Then, $T\subset S$ is t-closed with  $M=(T:S)\in\mathrm{Max}(T)$ (\cite[Lemma 3.17]{DPP3}). Since $k\subset S$ has FCP by \cite[Theorem 4.2]{DPP2}, there exists a finite chain $k:=R_0\subset\cdots\subset R_n\subset R_{n+1}:=T$ such that $R_i\subset R_{i+1}$ is minimal ramified for each $i=0,\ldots,n$. There are no decomposed minimal extension $R_i\subset R_{i+1}$ because $T$ is local. Set $R:=R_n$ and consider the extension $R\subset S$. Since $T/M\cong k$ and $S/M\cong L$, we get that $T\subset S$ is a Boolean extension by \cite[Proposition 3.4 (5)]{Pic 10} with $[T,S]=\{T,T_1,T_2,S\}$, where $T_i$ is such that $T_i/M=L_i$  because of the bijection $[T,S]\to[k,L]$ given by $U\mapsto U/M$. Moreover, $\ell[R,S]=3$ by \cite[Proposition  3.2]{Pic 4}. We claim that $T$ is the only atom of the extension. Assume there exists some $T'\in[R,S]\setminus\{T\}$ such that $R\subset T'$ is minimal. We get that $R\subset T'$ can be neither decomposed (as we already observed since $S$ is local) nor ramified, because in this case, we should have $T'\subset T$, a contradiction. If $R\subset T'$ is minimal inert, this leads also to a contradiction, because $T$ would not be the t-closure, since some minimal ramified extensions would start from $T$ (\cite[Proposition 7.4]{DPPS}). Then, $T$ is the socle $S_1$ of the extension. Moreover $S=S_2$ since $T\subset S$ is Boolean. Consider some $U\in]R,S]$ and let $V\in[R,U]$ be such that $R\subset V$ is minimal. As we already observed, $V=T$ so that $U\in[T,S]=[S_1,S_2]$, which yields $[R,S]=[S_0,S_1]\cup[S_1,S_2]$. 
We have the following diagram:

$$\begin{matrix}
  {}  & {}   & {}          &       {}       & T_1 & {}            & {}   \\
  {}  & {}   & {}          & \nearrow & {}     & \searrow & {}   \\
 R   & \to & T=S_1  &      {}       & {}     & {}       & S=S_2 \\
 {}   & {}   & {}          & \searrow & {}     & \nearrow  & {}  \\
 {}   & {}   & {}          &      {}       & T_2  & {}             & {}
\end{matrix}$$ 
Then, $R\subset S$ is a $\mathcal P$-extension and $R\subset S$ is distributive by Corollary \ref{8.5}. \end{example}

Let $R\subset S$ be a ring extension. We recall that    
 $R$ is called {\it unbranched } in $S$ if $\overline R$ is local. In this case, if $R \subset S$ has FCP, the following Lemma shows that any $T\in [R,S]$ is local. 
  An extension $R\subset S$ is called {\it quasi-Pr\"ufer} if $\overline R\subseteq S$ is a Pr\"ufer extension; that is, $\overline R\subseteq T$ is a flat epimorphism for each $T\in [\overline R,S]$ \cite[Definition 2.1]{Pic 5}. An FCP extension is  quasi-Pr\"ufer \cite[Corollary 3.3]{Pic 5} since an FCP integrally closed extension is Pr\"ufer \cite[Proposition 1.3]{Pic 5}.

 \begin{lemma} \label{8.71} Let $R \subset S$ be 
  a   quasi-Pr\"ufer extension
   such that $R$ is unbranched  in $S$. Then, $T$ is local for each $T\in [R,S]$. 
   In particular, this holds if $R \subset S$ has FCP.
\end{lemma}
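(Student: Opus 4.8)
The plan is to fix $T\in[R,S]$ and prove that $T$ is local by sandwiching it at the bottom of an integral extension whose top ring is local. First I would note that $\overline R T\in[R,S]$ is integral over $T$: by definition $\overline R=\overline R^S$ consists of the elements of $S$ integral over $R$, hence integral over the larger ring $T$, so the $T$-algebra $\overline R T$ is generated by elements integral over $T$ and is therefore integral over $T$. Consequently, as soon as $\overline R T$ is known to be local it will follow that $T$ is local: in an integral extension $T\subseteq\overline R T$ with $\overline R T$ local, Lying Over together with incomparability forces every maximal ideal of $T$ to be contracted from the unique maximal ideal of $\overline R T$, so $T$ has a single maximal ideal.

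It thus remains to see that $\overline R T$ is local, and this is where the hypotheses enter. Since $R$ is unbranched in $S$, the ring $\overline R$ is local; I claim moreover that $\overline R\subseteq\overline R T$ is a Pr\"ufer extension. Indeed, $R\subseteq S$ quasi-Pr\"ufer means $\overline R\subseteq S$ is Pr\"ufer, i.e. $(\overline R,S)$ is a normal pair; as $\overline R\subseteq\overline R T\subseteq S$, every ring in $[\overline R,\overline R T]$ belongs to $[\overline R,S]$ and is therefore integrally closed in $S$, a fortiori in $\overline R T$, so $(\overline R,\overline R T)$ is a normal pair too. Now a Pr\"ufer extension of a local ring has local top ring — in fact the interval of intermediate rings is a chain of local rings — so $\overline R T$ is local. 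This is the Knebusch--Zhang description of Pr\"ufer extensions over a local base \cite{KZ}; it is the same mechanism underlying Example~\ref{8.32}, where an integrally closed FCP extension of a local ring is a chain.

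Combining the two paragraphs shows $T$ is local for every $T\in[R,S]$, which is the assertion. The closing ``in particular'' is then immediate, since an FCP extension is quasi-Pr\"ufer (as recalled just before the statement), so an FCP extension with $R$ unbranched in $S$ satisfies the hypotheses. The only non-formal ingredient — the step I would treat as the main obstacle — is the assertion that a Pr\"ufer extension over a local base is again local; everything else is routine bookkeeping with Lying Over, incomparability, and the definition of a normal pair, so that is where I would be careful to quote the precise result from \cite{KZ} (or \cite{Pic 5}).
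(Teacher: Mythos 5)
Your proof is correct, and it is essentially a mirror image of the paper's argument. The paper factors $R\subseteq T$ through the integral closure of $R$ \emph{inside} $T$, writing $R\subseteq\overline{R}^T\subseteq T$ with $\overline{R}^T=\overline{R}\cap T$: since $\overline{R}^T$ sits integrally inside the local ring $\overline{R}$, it is local, and $\overline{R}^T\subseteq T$ is Pr\"ufer (because $R\subseteq T$ inherits quasi-Pr\"uferness from $R\subseteq S$), so $T$ is local by the Knebusch--Zhang fact you invoke, quoted in the paper as \cite[Proposition 1.2 (3)]{Pic 5}. You instead go \emph{up} from $T$ to the product $\overline{R}T$: you show $\overline{R}\subseteq\overline{R}T$ is Pr\"ufer over the local base $\overline{R}$, hence $\overline{R}T$ is local, and then contract down the integral extension $T\subseteq\overline{R}T$. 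Both arguments use exactly the same two ingredients — an integral extension with local top has local bottom, and a Pr\"ufer extension of a local ring has a chain of local intermediate rings — but you apply them in the opposite order and to the ``dual'' intermediate object ($\overline{R}T$ rather than $\overline{R}\cap T$). Your version has the mild virtue of not needing the fact that quasi-Pr\"uferness passes from $R\subseteq S$ to the subextension $R\subseteq T$; the paper's version has the mild virtue of staying entirely inside $T$ and so not needing the restriction property of normal pairs that you spell out. Either is a clean proof.
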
 

\begin{proof} Since $\overline R$ is local, so is any element of $[R,\overline R]$. Let $T\in [R,S]$, so that $\overline R^T=\overline R\cap T$ is local. Since $R \subset S$ 
 is quasi-Pr\"ufer, so is $R \subset T$, and $T$ is local by \cite[Proposition 1.2 (3)]{Pic 5}, 
because $\overline R^T\subseteq T$ is Pr\"ufer. 
\end{proof} 

 As we saw in Example \ref{8.7} (3), the t-closure of the extension is the socle of the extension. We are going to show that for some distributive extensions $R\subset S$, the t-closure and the integral closure are elements of the Loewy series.

\begin{proposition} \label{8.9} Let $R\subset S$ be an FCP distributive extension 
 (hence FIP) 
 such that $R$ is unbranched  in $S$,
 and  $\{S_i\}_{i=0}^n$ its Loewy series. Then,  there exist $k,l\in\{0,\ldots,n\}$ such that ${}_{\overline R}^tR=S_k$ and $\overline R=S_l$. Moreover, $[R,{}_{\overline R}^tR]$ and $[\overline R,S]$ are chains.
\end{proposition}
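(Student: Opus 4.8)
The plan is to reduce to the local case, treat the integrally closed ``top'' $[\overline R,S]$ and the subintegral ``bottom'' $[R,{}_{\overline R}^tR]$ separately, and locate $\overline R$ and ${}_{\overline R}^tR$ inside the Loewy series by induction on $\ell[R,S]$. Since $R\subset S$ has FCP and $R$ is unbranched in $S$, Lemma~\ref{8.71} gives that every $T\in[R,S]$ is local. As $\overline R$ is integrally closed in $S$, is local, and $\overline R\subseteq S$ has FCP, Example~\ref{8.32} (i.e.\ \cite[Theorem 6.10]{DPP2}) shows that $[\overline R,S]$ is a chain --- one of the two asserted chain properties --- and by Corollary~\ref{8.31} the Loewy series of $\overline R\subset S$ then exhausts $[\overline R,S]$. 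Moreover, by Proposition~\ref{8.3} each Loewy step $S_i\subset S_{i+1}$ of $R\subset S$ is Boolean, hence, all rings being local, it is either integral or Pr\"ufer by \cite[Proposition 3.4 and Corollary 3.19]{Pic 10}; in particular no minimal subextension of a Loewy step is decomposed.

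To locate $\overline R$ I would induct on $\ell[R,S]$. If $\overline R=R$, take $k=l=0$: then $[R,{}_{\overline R}^tR]=\{R\}$ and $[\overline R,S]=[R,S]$ is a chain. If $R\subsetneq\overline R$, consider the socle $S_1=\mathcal S[R,S]$. The Boolean extension $R\subset S_1$ is integral or Pr\"ufer; were it Pr\"ufer, every atom of $[R,S]$ would be a Pr\"ufer minimal extension, so $R$ would admit no integral minimal extension inside $S$, contradicting $R\subsetneq\overline R$. Hence $R\subset S_1$ is integral, so $S_1\subseteq\overline R$; since $S_1$ is integral over $R$ we get $\overline{S_1}^{S}=\overline R$, so $S_1$ is again unbranched in $S$. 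The Loewy series of $S_1\subset S$ is $\{S_i\}_{i=1}^n$ and $\ell[S_1,S]<\ell[R,S]$, so the induction hypothesis applied to $S_1\subset S$ yields an index $l$ with $\overline R=S_l$ and an index $k'$ with ${}_{\overline R}^tS_1=S_{k'}$ and $[S_1,S_{k'}]$ a chain.

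For ${}_{\overline R}^tR$: by definition of the $t$-closure, $R\subseteq{}_{\overline R}^tR$ is infra-integral, and since its minimal subextensions are ramified, decomposed, or inert but here neither inert (infra-integrality) nor decomposed (locality), they are all ramified, so $R\subseteq{}_{\overline R}^tR$ is subintegral by Proposition~\ref{1.31}. The key point I would prove (extracting the structure of Boolean integral extensions from \cite{Pic 10}) is that \emph{a distributive subintegral FCP extension with all intermediate rings local is a chain} --- equivalently, a nontrivial Boolean integral extension with local rings that has a ramified atom is a single minimal ramified extension, two incomparable ramified atoms being incompatible with the Boolean meet/join relations over a local base. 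Granting this, $[R,{}_{\overline R}^tR]$ is a chain. To exhibit ${}_{\overline R}^tR$ as a Loewy term: every ramified atom of $[R,S]$ lies in $S_1$, so if $S_1$ has no ramified atom then $R$ has no ramified minimal extension in $S$ and ${}_{\overline R}^tR=R=S_0$; otherwise $R\subset S_1$ has a ramified atom, hence by the structural fact it is a single minimal ramified step, and transitivity of infra-integrality (both $R\subset S_1$ and $S_1\subseteq{}_{\overline R}^tS_1$ are infra-integral, while ${}_{\overline R}^tR$ is infra-integral over $S_1$ and contains $S_1$) gives ${}_{\overline R}^tR={}_{\overline R}^tS_1=S_{k'}$, so $k=k'$ works. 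This closes the induction.

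I expect the main obstacle to be this last step: proving that the subintegral part of the extension is forced to be a chain and, relatedly, that ${}_{\overline R}^tR$ and $\overline R$ genuinely occur as terms $S_k$ and $S_l$ of the Loewy series rather than merely lying strictly between consecutive terms. The reduction to the local case, the chain property of $[\overline R,S]$, and the inductive bookkeeping are routine once Lemma~\ref{8.71}, Proposition~\ref{8.3}, and the quasi-Pr\"uferness of FCP extensions (\cite[Corollary 3.3]{Pic 5}) are in hand.
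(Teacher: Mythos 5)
Your approach is genuinely different from the paper's and could, in principle, be made to work, but there is a real gap in the key step that you yourself flag.

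\textbf{Comparison of approaches.} The paper does not induct on length. Instead it proves two monotonicity facts along the Loewy series: (i) once a step $S_i\subset S_{i+1}$ is Pr\"ufer, so are all later steps (using \cite[Lemma 1.5]{Pic 6} to contradict locality of $S_i$ if one had both an integral and a Pr\"ufer minimal extension out of $S_i$), which immediately gives $\overline R=S_l$ where $S_l\subset S_{l+1}$ is the first Pr\"ufer step; and (ii) among the integral steps, once one is t-closed, so are all subsequent ones (using \cite[Proposition 7.4]{DPPS}, which produces two maximal chains of different lengths if a ramified and an inert minimal extension start from the same local ring, contradicting catenarity via Proposition~\ref{1.0}). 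This gives ${}_{\overline R}^tR=S_k$. Then \cite[Lemma 3.26]{Pic 10} says each remaining subintegral Boolean step is a single minimal ramified extension, whence the chain statement. Your route instead reduces to $S_1\subset S$ by induction, which is clean for locating $\overline R$ and correctly establishes that $R\subset S_1$ must be integral when $R\subsetneq\overline R$; the paper's direct argument and your induction buy essentially the same thing there.

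\textbf{The gap.} The step you grant without proof is the one that is doing the real work: ``a nontrivial Boolean integral extension of a local ring with a ramified atom is a single minimal ramified extension.'' Your parenthetical justification (``two incomparable ramified atoms being incompatible with the Boolean meet/join relations'') addresses only the ramified--ramified case, and even there is a gesture rather than an argument (the actual mechanism is that two incomparable minimal ramified extensions over a local ring make $|[R,A_1A_2]|$ infinite, \cite[Theorem 6.1 (8)(a)]{Pic 6}, contradicting FIP). More importantly, your stated fact also requires ruling out a ramified atom coexisting with an inert atom, and you never address that case; in the paper this is precisely what \cite[Proposition 7.4]{DPPS} is invoked for, via a catenarity contradiction. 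Without that, your dichotomy ``if $S_1$ has a ramified atom then $R\subset S_1$ is a single ramified step'' is unjustified, and the rest of your induction (identifying ${}_{\overline R}^tR$ with ${}_{\overline R}^tS_1$) does not get off the ground. Also, the ``equivalently'' you write is not quite right: ``distributive subintegral local FCP extension is a chain'' follows from your Boolean statement, but not conversely, since the Boolean statement must also exclude the ramified/inert mixture and is therefore strictly stronger. In short: the skeleton is sound and the reduction to $S_1\subset S$ is a valid alternative to the paper's monotonicity argument, but the central lemma is stated, not proved, and your sketch of it omits half the required case analysis.
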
  
\begin{proof} Since $\overline R$ is a local ring, so are $R$ and  $S_i$ for any $i\in\mathbb N_{n+1}$ 
 by Lemma \ref{8.71}. 
 Let $M$ be the maximal ideal of $R$. 

 Since $S_i$ is local and $S_i\subset S_{i+1}$ is Boolean  FIP,
  it follows from \cite[Corollary 3.19]{Pic 10}  that $S_i\subset S_{i+1}$ is either integral or Pr\"ufer. Assume that $S_i\subset S_{i+1}$ is Pr\"ufer for some 
 $i\in\{0,\ldots,n-2\}$
  and let $j>i$ for $j<n$. We claim that $S_j\subset S_{j+1}$ is  Pr\"ufer. Deny, so that $S_i\subset S_{j+1}$ is neither integral, nor Pr\"ufer. Set $S'_i:=\overline {S_i}^{S_{j+1}}$. Then, $S'_i\neq S_i,S_{j+1}$. It follows that there exist a minimal integral extension $S_i\subset T$ and a minimal Pr\"ufer extension $S_i\subset U$ with $T,U\in[S_i,S_{j+1}],\ T\subseteq S'_i,\ U\subseteq S_{i+1}$. An application  of \cite[Lemma 1.5]{Pic 6}Ê leads to 
  $\mathcal C(S_i,T)\neq \mathcal C(S_i,U)$,
  a contradiction since $S_i$ is local. Then, $S_j\subset S_{j+1}$ is  Pr\"ufer for any $j>i$. In particular,  as soon as some $S_k\subset S_{k+1}$ Pr\"ufer, so is $S_i\subset S_{i+1}$ for each $i\geq k$, whence  $S_k=\overline R$. 

Now, if $R\neq\overline R$, we can work with the extension $R\subset\overline R$, which is also distributive, and its Loewy series is $\{S_i\}_{i=0}^{k}$. From \cite[Proposition 3.24]{Pic 10},  we deduce that $S_i\subset S_{i+1}$ is either infra-integral (more precisely subintegral since $S_{i+1}$ is local), or t-closed, for each $i<k$ since $S_i$ is local.
In order to establish  the result for  the t-closure, we mimic the previous proof given for the integral closure.  Assume that $S_i\subset S_{i+1}$ is t-closed and let $j>i$ for $j<k$. We claim that $S_j\subset S_{j+1}$ is  t-closed. Deny, so that $S_i\subset S_{j+1}$ is neither infra-integral, nor t-closed. Set $S'_i:={}_{S_{j+1}}^tS_i$. Then, $S'_i\neq S_i,S_{j+1}$. It follows that there exist a minimal ramified extension $S_i\subset T$ and a minimal inert extension $S_i\subset U$ with $T,U\in[S_i,S_{j+1}],\ T\subseteq S'_i,\ U\subseteq S_{i+1}$. Then \cite[Proposition 7.4 ]{DPPS} shows that there are two maximal chains in $[S_i,TU]$ of different lengths, contradicting the distributivity of $R\subset S$ by Proposition \ref{1.0}. Then, $S_j\subset S_{j+1}$ is  t-closed for any $j$ such that $k>j>i$. In particular,  as soon as  $S_l\subset S_{l+1}$ is t-closed, so is $S_i\subset S_{i+1}$ for each $i\geq l$, giving $S_l={}_{\overline R}^tR$. 

Since $\overline R$ is a local ring, and  $R\subseteq S_l$ is subintegral, so is $S_i\subset S_{i+1}$ for each $i<l$, and then  is minimal ramified by \cite[Lemma 3.26]{Pic 10}. In particular, each $S_i\subset S_{i+1}$ has only one atom, which is $S_{i+1}$. Let $T\in]R,S_l[$. Since $R\subset T$ has FCP, there is a tower $R\subset T_i\subset T$, where $\{T_i\}_{i=0}^r$ is the Loewy series of $R\subset T$. Since $R=S_0=T_0$, an obvious induction shows that $T_i=S_i$ for all $i<r$, and  $T=S_r$   for some $r\leq l$, so that $R\subseteq {}_{\overline R}^tR$ is a chain. By \cite[Theorem 6.10]{DPP2}, $\overline R\subset S$ is a chain. 
\end{proof}

 \begin{remark} \label{8.91} Example \ref{8.81} shows that the conclusion of Proposition \ref{8.9} does not hold in general. In this example, $R$ is not local (although $R\subset S$ is spectrally injective) and ${}_S^tR=R_1\neq S_i$ for $i=1,2$ because $R\subset R_1$ is minimal ramified and $R_1\subset S$ is t-closed. A more precise study will be made in a forthcoming paper.
\end{remark} 

When an FCP distributive extension  satisfies property $(\mathcal P)$, the Loewy series allows to give information about the extension.

\begin{proposition} \label{8.10} Let $R\subset S$ be an FCP distributive  (hence FIP)
$\mathcal P$-extension and  $\{S_i\}_{i=0}^n$ its Loewy series, so that each  $S_i\subset S_{i+1}$ is Boolean.
Then

\begin{enumerate}
\item There exist $j,k\in\{0,\ldots,n\}$ with $j\leq k$ such that ${}_{\overline R}^tR\in[S_j,S_{j+1}[$ and $\overline R\in[S_k,S_{k+1}[$.

\item Let $i\in\{0,\ldots,n-1\}$. If $i<j,\ S_i\subset S_{i+1}$ is  infra-integral. If $j<i<k,\ S_i\subset S_{i+1}$ is  t-closed. If $i> k,\ S_i\subset S_{i+1}$ is  Pr\"ufer.

\item For $i\in\{0,\ldots,n-1\}$, let ${\mathcal A}_i$ be the set of atoms of $[S_i,S_{i+1}]$. Then $\ell[R,S]=\sum_{i=0}^{n-1}|{\mathcal A}_i|$.
\end{enumerate}
\end{proposition}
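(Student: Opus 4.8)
The plan is to combine the partition of $[R,S]$ furnished by the $\mathcal P$-property with the canonical decomposition $R\subseteq{}_S^tR\subseteq\overline R\subseteq S$ and Proposition \ref{1.31}. First, for (1): since $R\subset S$ is a $\mathcal P$-extension, $[R,S]=\bigcup_{i=0}^{n-1}[S_i,S_{i+1}]$, and because $\{S_i\}_{i=0}^n$ is a strictly increasing chain this is the disjoint union of the blocks $[S_i,S_{i+1}[$ for $0\le i\le n-1$ together with $\{S\}$. Hence ${}_{\overline R}^tR$ lies in a unique block $[S_j,S_{j+1}[$ and $\overline R$ in a unique block $[S_k,S_{k+1}[$ (reading $k=n$ in the degenerate case $\overline R=S$, and similarly $j=0$ if ${}_{\overline R}^tR=R$). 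That $j\le k$ follows from ${}_{\overline R}^tR\subseteq\overline R$: if $j\ge k+1$ we would get $S_{k+1}\subseteq S_j\subseteq{}_{\overline R}^tR\subseteq\overline R\subsetneq S_{k+1}$, a contradiction.

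For (2), recall that ${}_{\overline R}^tR$ is also the $t$-closure of $R$ in $S$ (the $t$-closure is always contained in $\overline R$), so $R\subseteq{}_{\overline R}^tR$ is infra-integral, ${}_{\overline R}^tR\subseteq\overline R$ is $t$-closed, and $\overline R\subseteq S$ is Pr\"ufer because $R\subset S$ has FCP (\cite[Corollary 3.3]{Pic 5}). If $i<j$, then $S_i\subset S_{i+1}\subseteq S_j\subseteq{}_{\overline R}^tR$, and for $Q'\in\mathrm{Spec}(S_{i+1})$, $Q:=Q'\cap S_i$, we lift $Q'$ to $Q''\in\mathrm{Spec}({}_{\overline R}^tR)$ and observe that the composite $\kappa_R(Q''\cap R)\to\kappa_{S_i}(Q)\to\kappa_{S_{i+1}}(Q')\to\kappa_{{}_{\overline R}^tR}(Q'')$ is an isomorphism by infra-integrality of $R\subseteq{}_{\overline R}^tR$; since each of its three factors is an injective homomorphism of fields, all three are isomorphisms, in particular $\kappa_{S_i}(Q)\to\kappa_{S_{i+1}}(Q')$, so $S_i\subset S_{i+1}$ is infra-integral. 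If $j<i<k$, then ${}_{\overline R}^tR\subsetneq S_{j+1}\subseteq S_i\subset S_{i+1}\subseteq S_k\subseteq\overline R$; I extend a maximal chain of $[S_i,S_{i+1}]$ (possible by FCP) to a maximal chain of $[{}_{\overline R}^tR,\overline R]$, all of whose steps are inert by Proposition \ref{1.31}(3) since ${}_{\overline R}^tR\subseteq\overline R$ is $t$-closed, whence all steps of the sub-chain are inert and $S_i\subset S_{i+1}$ is $t$-closed by Proposition \ref{1.31}(3) again. If $i>k$, then $\overline R\subsetneq S_{k+1}\subseteq S_i\subset S_{i+1}\subseteq S$, so $S_i\subset S_{i+1}$ is a subextension of the Pr\"ufer extension $\overline R\subseteq S$, hence Pr\"ufer (subextensions of Pr\"ufer extensions are Pr\"ufer).

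For (3), Proposition \ref{8.3} gives $\ell[R,S]=\sum_{i=0}^{n-1}\ell[S_i,S_{i+1}]$ with each $S_i\subset S_{i+1}$ Boolean, and for a Boolean extension the lattice is the power set of its set of atoms, so $\ell[S_i,S_{i+1}]=|\mathcal A_i|$ by \cite[Theorem 3.1]{Pic 10}; summing yields $\ell[R,S]=\sum_{i=0}^{n-1}|\mathcal A_i|$. The step I expect to be the main obstacle is the $t$-closed case of (2): unlike infra-integrality, which is detectable on residual extensions, and the Pr\"ufer property, which is inherited by subextensions, $t$-closedness resists a direct elementwise argument for a subextension, which is why I route it through Proposition \ref{1.31} and the splitting and reassembly of maximal chains — and this is precisely where FCP is needed. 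A minor but necessary bookkeeping point is keeping track of strict versus non-strict inclusions at the lower ends of the blocks (for instance ${}_{\overline R}^tR\subsetneq S_{j+1}$ and $\overline R\subsetneq S_{k+1}$) and disposing of the degenerate cases ${}_{\overline R}^tR=R$ and $\overline R=S$.
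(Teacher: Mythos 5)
Your proofs of parts (1) and (2) follow the paper's route. For (1) both of you locate ${}_{\overline R}^tR$ and $\overline R$ in their blocks $[S_j,S_{j+1}[$, $[S_k,S_{k+1}[$ and deduce $j\le k$ by comparing with the chain; for (2) both of you sandwich $[S_i,S_{i+1}]$ inside $[R,{}_{\overline R}^tR]$, $[{}_{\overline R}^tR,\overline R]$ or $[\overline R,S]$ and invoke heredity of infra-integrality, $t$-closedness and Pr\"ufer-ness for intermediate subextensions. The only difference is one of detail: the paper asserts the heredity facts without comment, whereas you supply justifications (the residual-field composition argument for infra-integrality and the chain-extension argument through Proposition~\ref{1.31}(3) for $t$-closedness). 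Those justifications are correct and are worth spelling out, since $t$-closedness is not obviously inherited from the bare definition; the route through Proposition~\ref{1.31} is exactly how one should see it in the FCP setting.

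Part (3) is where your route genuinely diverges from the paper's. The paper counts $\Pi$-irreducible elements: by \cite[Lemma 4, p.486]{BM} the length of a finite distributive lattice equals the number of $\Pi$-irreducibles, and Theorem~\ref{8.6} (which is the one place the $\mathcal P$-hypothesis really enters) identifies these with $\bigcup_i\mathcal A_i$. You instead invoke Proposition~\ref{8.3} to get $\ell[R,S]=\sum_{i}\ell[S_i,S_{i+1}]$ and then use that a Boolean lattice on $m$ atoms has length $m$. Your argument is shorter and, notably, does not use the $\mathcal P$-property at all: Proposition~\ref{8.3} holds for every distributive FCP extension, so the identity $\ell[R,S]=\sum_{i=0}^{n-1}|\mathcal A_i|$ is valid in that wider generality (one can check this directly on Example~\ref{8.7}(1), which fails $(\mathcal P)$: there $\ell=3=2+1=|\mathcal A_0|+|\mathcal A_1|$, even though the $\Pi$-irreducible $L_4$ is not an atom of any block). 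So your proof of (3) both simplifies the argument and reveals a redundant hypothesis; the paper's version buys nothing extra for this particular equality, though it does exhibit the $\mathcal A_i$ as precisely the $\Pi$-irreducibles, which is extra structural information available only under $(\mathcal P)$.
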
 

\begin{proof}  Each $S_i\subset S_{i+1}$ is Boolean because of Proposition \ref{8.3}.

(1) Since $[R,S]=\cup_{i=0}^{n-1}[S_i, S_{i+1}]$, there exist $j,k\in\{0,\ldots,n\}$ such that ${}_{\overline R}^tR\in[S_j,S_{j+1}[$ and $\overline R\in[S_k,S_{k+1}[$. Moreover, $j\leq k$ since $S_j\subseteq {}_{\overline R}^tR\subseteq\overline R\subset S_{k+1}$ implies $j<k+1$. 

(2) Let $i<k$ so that $i+1\leq k$ which implies $S_i\subset S_{i+1}\subseteq S_k\subset\overline R$. Then $S_i\subset S_{i+1}$ is  integral. If $i<j$, then $S_i\subset S_{i+1}\subseteq S_j\subseteq {}_{\overline R}^tR$ and $S_i\subset S_{i+1}$ is  infra-integral. If $j<i<k$, then $j+1\leq i$, so that $ {}_{\overline R}^tR\subset S_{j+1}\subseteq  S_i\subset S_{i+1}\subseteq S_k\subseteq \overline R$ and $S_i\subset S_{i+1}$ is  t-closed.

Let $i> k$, so that $i\geq k+1$ which implies $\overline R\subset S_{k+1}\subseteq S_i\subset S_{i+1}$. Then $S_i\subset S_{i+1}$ is  Pr\"ufer.

(3) By \cite[Lemma 4, p.486]{BM}, we get that $\ell[R,S]$ is the number of $\Pi$-irreducible elements of $[R,S]$. Then, Theorem \ref{8.6} gives the result.
\end{proof}

 We end this section by studying the Loewy series of some special extensions.
We need the following lemma used in the next Proposition.  

\begin{lemma}\label{8.11} \cite[Lemma 2.9 and the paragraph before Proposition 2.3]{Pic 3} An FCP extension $R \subset S$ with a factorization  $R \subseteq T \subseteq  S$ such that $\mathrm{Supp}_R(T/R) \cap \mathrm{Supp}_R(S/T)= \emptyset$  admits a unique factorization $R \subseteq U \subseteq S$ such that $T\cap U=R$ and $TU=S$ 
($U$ is the complement of $T$ in $[R,S]$). 
Moreover, $\mathrm{Supp}_R(U/R)=\mathrm{Supp}_R(S/T)$ and $\mathrm{Supp}_R(S/U)=\mathrm{Supp}_R(T/R)$. 
\end{lemma}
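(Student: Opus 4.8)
The plan is to reduce everything to a localization argument, using the fact that the supports of $T/R$ and $S/T$ are disjoint subsets of $\mathrm{MSupp}_R(S/R)$. Write $\mathrm{MSupp}_R(S/R)=\{M_1,\dots,M_r\}$, and let $\mathcal{I}:=\{i\mid M_i\in\mathrm{Supp}_R(T/R)\}$, $\mathcal{J}:=\{i\mid M_i\in\mathrm{Supp}_R(S/T)\}$. First I would observe that, since $\mathrm{Supp}_R(S/T)\cap\mathrm{Supp}_R(T/R)=\emptyset$ and $\mathrm{Supp}_R(S/R)=\mathrm{Supp}_R(T/R)\cup\mathrm{Supp}_R(S/T)$, the index sets $\mathcal{I}$ and $\mathcal{J}$ partition $\{1,\dots,r\}$. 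Consequently, for $i\in\mathcal{I}$ we have $T_{M_i}=S_{M_i}$ and $R_{M_i}\subset T_{M_i}$, while for $i\in\mathcal{J}$ we have $T_{M_i}=R_{M_i}$ and $R_{M_i}\subset S_{M_i}$.

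Next I would define the candidate complement $U$ locally and then globalize it via the injection $\varphi\colon[R,S]\to\prod_{i=1}^r[R_{M_i},S_{M_i}]$ of \cite[Theorem 3.6]{DPP2}. Namely, set $U_{M_i}:=S_{M_i}$ for $i\in\mathcal{J}$ and $U_{M_i}:=R_{M_i}$ for $i\in\mathcal{I}$; outside $\mathrm{MSupp}_R(S/R)$ one has $R_P=S_P$, so there is no ambiguity. One must check that the tuple $(U_{M_1},\dots,U_{M_r})$ lies in the image of $\varphi$; this is where the FCP hypothesis is used, together with the fact that $R\subseteq S$ restricts nicely to the disjoint pieces. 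Concretely, I would invoke the decomposition of an FCP extension along the disjoint support pieces from \cite{Pic 3} (the very paragraph cited in the lemma): since the localizations of $S/R$ at $M_i$ for $i\in\mathcal{I}$ and for $i\in\mathcal{J}$ involve disjoint prime ideals, $S$ is the fibre product of the rings realizing $T$ (resp. the complementary data) over these disjoint locales, and this produces an honest $U\in[R,S]$ with the prescribed localizations.

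Once $U$ is in hand, the verification $T\cap U=R$ and $TU=S$ is routine and local: for each $i$, either $i\in\mathcal{I}$, where $T_{M_i}\cap U_{M_i}=S_{M_i}\cap R_{M_i}=R_{M_i}$ and $T_{M_i}U_{M_i}=S_{M_i}R_{M_i}=S_{M_i}$, or $i\in\mathcal{J}$, where the symmetric computation gives the same conclusions; for $P\notin\mathrm{MSupp}_R(S/R)$ everything equals $R_P=S_P$. Since intersection and product (compositum) commute with localization, $(T\cap U)_P=R_P$ and $(TU)_P=S_P$ for all $P\in\mathrm{Spec}(R)$, hence $T\cap U=R$ and $TU=S$. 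Uniqueness of $U$ follows from injectivity of $\varphi$: any complement $U'$ of $T$ must satisfy $(T\cap U')_{M_i}=R_{M_i}$ and $(TU')_{M_i}=S_{M_i}$, which for $i\in\mathcal{I}$ (where $T_{M_i}=S_{M_i}$) forces $U'_{M_i}=R_{M_i}$, and for $i\in\mathcal{J}$ (where $T_{M_i}=R_{M_i}$) forces $U'_{M_i}=S_{M_i}$; thus $U'$ and $U$ have the same image under $\varphi$, so $U'=U$. Finally, the support statements $\mathrm{Supp}_R(U/R)=\mathrm{Supp}_R(S/T)$ and $\mathrm{Supp}_R(S/U)=\mathrm{Supp}_R(T/R)$ are immediate from the construction, since $U/R$ is locally trivial exactly at the $M_i$ with $i\in\mathcal{I}$ and $S/U$ is locally trivial exactly at the $M_i$ with $i\in\mathcal{J}$.

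The main obstacle is the globalization step, i.e. showing the locally defined tuple actually comes from a genuine intermediate ring; I expect this to be exactly the content of \cite[Lemma 2.9 and the paragraph before Proposition 2.3]{Pic 3}, so in the write-up I would cite that result for the existence of $U$ and spend the remaining lines on the (easy) local verifications of the complement identities, the support identities, and uniqueness via $\varphi$.
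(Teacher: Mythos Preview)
The paper does not supply its own proof of this lemma: it is stated with a direct citation to \cite[Lemma 2.9 and the paragraph before Proposition 2.3]{Pic 3} and nothing more. So there is no argument in the paper to compare against; your proposal is in effect an expansion of what the cited reference presumably contains.

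Your outline is sound and you have correctly isolated the one nontrivial point. The local computations (that $T\cap U$ and $TU$ localize to $R$ and $S$ respectively, the uniqueness via injectivity of $\varphi$, and the support identities) are routine and go through exactly as you say. The only genuine content is the existence of $U$: the map $\varphi:[R,S]\to\prod_i[R_{M_i},S_{M_i}]$ is injective for any FCP extension but need not be surjective (the paper's $\mathcal B$-extension condition is precisely surjectivity, and it can fail), so one cannot simply declare a tuple and pull it back. You recognize this and defer to \cite{Pic 3} for it, which is exactly what the paper does. In a self-contained write-up one would construct $U$ directly, for instance as the subring of elements $s\in S$ whose image in $S_{M_i}$ lies in $R_{M_i}$ for every $i\in\mathcal I$ (equivalently, a suitable pullback), and then check it is an $R$-subalgebra with the required localizations; but since the present paper is content to cite, your proposal already matches and in fact exceeds what the paper offers.
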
 

 \begin{proposition}\label{8.12} Let $R\subset S$ be an FCP extension with Loewy series $\{S_i\}_{i=0}^n$ and $T\in]R,S[$  such that $\mathrm{Supp}_R(T/R) \cap \mathrm{Supp}_R(S/T)= \emptyset$. Then:
 \begin{enumerate}
 \item   $T$ has a unique complement $U\in[R,S]$.
 
  \item Let $\{T_i\}_{i=0}^m$ (resp.;  $\{U_i\}_{i=0}^r$) be the Loewy series of $[R,T]$ (resp.;  of $[R,U]$). Then $S_i=T_iU_i$ for each $i\in\{0,\ldots,n\}$, with $n=\pounds[R,S]=\sup(\pounds[R,T],\pounds[R,U])=\sup(m,r)$.
  \end{enumerate}
\end{proposition}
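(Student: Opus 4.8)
The plan is to leverage Lemma~\ref{8.11} together with the localization behaviour of the Loewy series established in Proposition~\ref{8.8}. First I would dispose of (1): since $R\subset S$ has FCP and $\mathrm{Supp}_R(T/R)\cap\mathrm{Supp}_R(S/T)=\emptyset$, Lemma~\ref{8.11} immediately furnishes a unique $U\in[R,S]$ with $T\cap U=R$ and $TU=S$, and records moreover that $\mathrm{Supp}_R(U/R)=\mathrm{Supp}_R(S/T)$ and $\mathrm{Supp}_R(S/U)=\mathrm{Supp}_R(T/R)$. So the supports of $T/R$ and $U/R$ partition $\mathrm{Supp}_R(S/R)$, and the two factorizations $R\subseteq T\subseteq S$ and $R\subseteq U\subseteq S$ are ``supported on complementary sets of maximal ideals''.

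For (2), the key point is that the disjointness of supports makes everything decouple after localization. Let $M\in\mathrm{MSupp}_R(S/R)$. By the support statements in Lemma~\ref{8.11}, exactly one of two things happens: either $M\in\mathrm{Supp}_R(T/R)$, in which case $U_M=R_M$ and hence $S_M=(TU)_M=T_M$; or $M\in\mathrm{Supp}_R(S/T)=\mathrm{Supp}_R(U/R)$, in which case $T_M=R_M$ and $S_M=U_M$. Now apply Proposition~\ref{8.8}(2) to each of the three extensions $R\subset S$, $R\subset T$, $R\subset U$: the $M$-th component of the Loewy series of $R\subset S$ is $(S_i)_M$, that of $R\subset T$ is $(T_i)_M$, and that of $R\subset U$ is $(U_i)_M$, each being the Loewy series of the localized extension and each stabilizing at $S_M$ (resp.\ $T_M$, $U_M$). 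In the first case $(S_i)_M$ equals the Loewy series of $R_M\subset T_M=S_M$, so $(S_i)_M=(T_i)_M$ for all $i$ (with the convention that a Loewy series is repeated by its last term once it reaches the top), while $(U_i)_M=R_M$ for all $i$; hence $(S_i)_M=(T_i)_M\cdot(U_i)_M=(T_iU_i)_M$. The second case is symmetric. Since this holds for every $M\in\mathrm{MSupp}_R(S/R)$ and trivially (both sides equal $R_M$) for $M\notin\mathrm{MSupp}_R(S/R)$, the injectivity of the map $\varphi:[R,S]\to\prod_M[R_M,S_M]$ of \cite[Theorem 3.6]{DPP2} forces $S_i=T_iU_i$ for every $i\in\{0,\ldots,n\}$.

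Finally the numerical statement: by Corollary~\ref{8.82}, $\pounds[R,S]=\sup\{\pounds[R_M,S_M]\mid M\in\mathrm{MSupp}(S/R)\}$, and likewise for $R\subset T$ and $R\subset U$. Splitting the index set $\mathrm{MSupp}(S/R)$ according to whether $M$ lies in $\mathrm{Supp}_R(T/R)$ or in $\mathrm{Supp}_R(S/T)$, and using $\pounds[R_M,S_M]=\pounds[R_M,T_M]$ on the first part and $\pounds[R_M,S_M]=\pounds[R_M,U_M]$ on the second, we get $\pounds[R,S]=\sup(\pounds[R,T],\pounds[R,U])$, which by definition of $m$ and $r$ is $\sup(m,r)$; and this common value is exactly the index $n$ at which $S_i=T_iU_i$ first equals $S$, consistent with $S_iU_i$ reaching $TU=S$ precisely when both $T_i=T$ and $U_i=U$... more carefully, $T_iU_i=S$ iff $(T_iU_i)_M=S_M$ for all $M$, which by the case analysis happens iff $(T_i)_M=T_M$ for $M\in\mathrm{Supp}(T/R)$ and $(U_i)_M=U_M$ for $M\in\mathrm{Supp}(U/R)$, i.e.\ iff $i\geq m$ and $i\geq r$. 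The main obstacle I anticipate is purely bookkeeping: being careful about the convention for ``the $i$-th term of a Loewy series'' once $i$ exceeds the Loewy length (one must read $S_i$ as $S$ for $i\geq\pounds[R,S]$, exactly as in the $n_M$ formalism of Proposition~\ref{8.8}), so that the identity $S_i=T_iU_i$ makes sense and is correct for all $i$ up to $n=\sup(m,r)$ rather than just up to $\inf(m,r)$.
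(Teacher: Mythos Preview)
Your proposal is correct and follows essentially the same approach as the paper: both arguments invoke Lemma~\ref{8.11} for (1), then for (2) partition $\mathrm{MSupp}_R(S/R)$ into $\mathrm{MSupp}_R(T/R)$ and $\mathrm{MSupp}_R(U/R)$, apply Proposition~\ref{8.8} to identify the localized Loewy series in each case, use the local-to-global principle to deduce $S_i=T_iU_i$, and finish with Corollary~\ref{8.82} for the numerical statement. Your explicit invocation of the injectivity of $\varphi$ from \cite[Theorem 3.6]{DPP2} and your remark on the bookkeeping convention for $i>\pounds$ make the same steps slightly more transparent, but there is no substantive difference.
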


\begin{proof}
(1)  Lemma  \ref{8.11} gives $T\cap U=R$ and $TU=S\ (*)$. Moreover $U$ is unique satisfying these properties. 

(2) Set $\mathcal M:=\mathrm{MSupp}_R(S/R),\ \mathcal M_T:=\mathrm{MSupp}_R(T/R)$ and $\mathcal M_U:=\mathrm{MSupp}_R(U/R)=\mathrm{MSupp}_R(S/T)$. Then, $\mathcal M=\mathcal M_T\cup\mathcal M_U$ with $\mathcal M_T\cap \mathcal M_U=\emptyset\ (**)$ by Lemma \ref{8.11}. Now, we use Proposition  \ref{8.8}.

Let $M\in\mathcal M$ and $\{S'_i\}_{i=0}^{n_M}$ (resp.; $\{T'_i\}_{i=0}^{m_M}$,Ê$\{U'_i\}_{i=0}^{r_M}$) be the Loewy series of $[R_M,S_M]$ (resp.; $[R_M,T_M],\ [R_M,U_M]$). Then $S'_i=(S_i)_M$ for each $i\in\{0,\ldots,n_M\},\ T'_i=(T_i)_M$ for each $i\in\{0,\ldots,m_M\}$ and $U'_i=(U_i)_M$ for each $i\in\{0,\ldots,r_M\}$. In view of $(**)$, we have either $M\in\mathcal M_T$ (a), or $M\in\mathcal M_U$ (b). In case (a), $M\not\in \mathcal M_U$, so that $U_M=R_M$ and $T_M=S_M$. It follows that $n_M=m_M$ and $S'_i=T'_i$ for each $i\leq n_M$. Moreover, $U'_i=R_M$ for each $i\leq n_M$, so that $S'_i=T'_iU'_i=(S_i)_M=(T_i)_M(U_i)_M=(T_iU_i)_M$. The same reasoning shows that in case (b), $n_M=r_M$ and $(S_i)_M=(T_i)_M(U_i)_M=(T_iU_i)_M$, so that  $(S_i)_M=(T_iU_i)_M$ for any $M\in\mathrm{MSupp}_R(S/R)$.
From  Corollary \ref{8.82}, we deduce that  $n=\sup_{M\in\mathcal M}(n_M)=\sup[\sup_{M\in\mathcal M_T}(n_M),\sup_{M\in\mathcal M_U}(n_M)]=\sup[\sup_{M\in\mathcal M_T}(m_M),\sup_{M\in\mathcal M_U}(r_M)]=\sup(m,r)$. 

If $m=r$, then $n=m=r$. Let $i\leq n$. Then, $(S_i)_M=(T_i)_M(U_i)_M$ for each $M\in\mathcal M$ leads to $S_i=T_iU_i$.

If $m\neq r$, assume $m<r$, so that $n=r$. 
 As above, $S_i=T_iU_i\in[T,S]$ for each $i\leq m$.
  Recall that $T_i=T$ for each $i\in\{m,\ldots,r\}$. In particular, $S_m=T_mU_m=TU_m\in[T,S]$ and we still have $S_i=T_iU_i\in[T,S]$ for each $i\geq m$. 
   Moreover,
    $\mathrm{MSupp}_R(S/S_i)\subseteq \mathrm{MSupp}_R(S/T)=\mathcal M_U$ for each $i\geq m$. 
\end{proof} 

 In \cite[Definition 4.1]{Pic 5}, we call an extension $R\subset S$  {\it  almost-Pr\"ufer} if it can be factored $R\subseteq U\subseteq S$, where $R\subseteq U$ is Pr\"ufer and $U\subseteq S$ is integral. Actually, $U$ is the {\it Pr\"ufer hull} $\widetilde{R}$ of the extension.

\begin{corollary}\label{8.13} Let $R\subset S$ be an FCP almost-Pr\"ufer  extension with Loewy series $\{S_i\}_{i=0}^n$.  Let $\{T_i\}_{i=0}^m$ (resp.  $\{U_i\}_{i=0}^r$) be the Loewy series of $[R,\overline R]$ (resp.;  of $[R,\widetilde{R}]$). Then $S_i=T_iU_i$ for each $i\in\{0,\ldots,n\}$, with $n=\pounds[R,S]=\sup(\pounds[R,\overline R],\pounds[R,\widetilde{R}])=\sup(m,r)$.
\end{corollary}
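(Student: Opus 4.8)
The plan is to reduce Corollary \ref{8.13} to Proposition \ref{8.12} by exhibiting the integral closure $\overline R$ and the Pr\"ufer hull $\widetilde R$ as a complementary pair in $[R,S]$ whose supports are disjoint. Concretely, I would first recall from the theory of almost-Pr\"ufer extensions (\cite[Definition 4.1]{Pic 5} and the surrounding results) that an FCP almost-Pr\"ufer extension $R\subset S$ factors as $R\subseteq\widetilde R\subseteq S$ with $R\subseteq\widetilde R$ Pr\"ufer and $\widetilde R\subseteq S$ integral, and dually $R\subseteq\overline R\subseteq S$ with $\overline R\subseteq S$ Pr\"ufer. The key structural fact I want is that $\overline R$ and $\widetilde R$ are mutual complements: $\overline R\cap\widetilde R=R$ and $\overline R\,\widetilde R=S$, and moreover $\mathrm{Supp}_R(\overline R/R)\cap\mathrm{Supp}_R(S/\overline R)=\emptyset$, equivalently $\mathrm{Supp}_R(\overline R/R)\cap\mathrm{Supp}_R(\widetilde R/R)=\emptyset$. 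This disjointness is exactly the hypothesis needed to invoke Lemma \ref{8.11} and Proposition \ref{8.12}.

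The main step is therefore to justify the support-disjointness. Here I would argue as follows: at a prime $M\in\mathrm{Supp}(S/R)$, the localization $R_M\subset S_M$ is again almost-Pr\"ufer, and its Pr\"ufer and integral parts localize correctly, i.e. $(\widetilde R)_M=\widetilde{R_M}$ and $(\overline R)_M=\overline{R_M}$. Since $R\subset S$ has FCP, for each such $M$ the extension $R_M\subset S_M$ is either integral (so $(\widetilde R)_M=R_M$ and $M\in\mathrm{Supp}(\overline R/R)$ but $M\notin\mathrm{Supp}(S/\overline R)$, as $\overline{R_M}=S_M$) or a flat epimorphism / Pr\"ufer (so $(\overline R)_M=R_M$, giving $M\notin\mathrm{Supp}(\overline R/R)$). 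That a local FCP extension is either integral or Pr\"ufer is the local dichotomy already used repeatedly in the excerpt (cf. the reasoning via \cite[Corollary 3.19]{Pic 10} in the proof of Proposition \ref{8.9}); applied to $R_M\subset S_M$ with $R_M$ local it forces exactly one of the two possibilities, which shows no prime lies in both supports. Hence $\mathrm{Supp}_R(\overline R/R)\cap\mathrm{Supp}_R(S/\overline R)=\emptyset$.

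Once that is in hand, the rest is a direct citation. Apply Proposition \ref{8.12} with $T:=\overline R$: its complement $U$ is uniquely determined, and by Lemma \ref{8.11} we have $\mathrm{Supp}_R(U/R)=\mathrm{Supp}_R(S/\overline R)$ and $\mathrm{Supp}_R(S/U)=\mathrm{Supp}_R(\overline R/R)$. Because $\overline R\,U=S$ with $\overline R\cap U=R$ and $R\subseteq U$ has support disjoint from $R\subseteq\overline R$, the extension $R\subseteq U$ is a flat epimorphism (it is integrally closed in $S$ locally, as at each $M\in\mathrm{Supp}(U/R)$ we have $(\overline R)_M=R_M$), hence Pr\"ufer; by uniqueness of the Pr\"ufer hull this identifies $U=\widetilde R$. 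Then Proposition \ref{8.12}(2) gives immediately $S_i=T_iU_i$ for all $i$ with $n=\pounds[R,S]=\sup(\pounds[R,\overline R],\pounds[R,\widetilde R])=\sup(m,r)$, which is the assertion.

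The step I expect to be the genuine obstacle is the clean identification of the complement $U$ of $\overline R$ with the Pr\"ufer hull $\widetilde R$ — i.e. checking that $U$ is Pr\"ufer, not merely a flat epimorphism at the relevant primes. I anticipate this is already essentially recorded in \cite{Pic 5} (the interplay of $\overline R$, $\widetilde R$, and the support decomposition for FCP almost-Pr\"ufer extensions), so in the write-up I would cite the relevant proposition there rather than reprove it; the localization/dichotomy argument above is the fallback if a direct reference is not available. Everything else is a mechanical application of Lemma \ref{8.11}, Proposition \ref{8.12}, and Corollary \ref{8.82}.
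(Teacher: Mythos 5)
Your proof takes essentially the same route as the paper: reduce to Proposition \ref{8.12} by showing $\overline R$ and $\widetilde R$ are complements with $\mathrm{Supp}_R(\overline R/R)\cap\mathrm{Supp}_R(S/\overline R)=\emptyset$; the paper simply cites \cite[Proposition 4.16]{Pic 5} for this disjointness and then invokes Proposition \ref{8.12}. One imprecision in your local argument should be corrected: it is \emph{not} true that every local FCP extension is either integral or Pr\"ufer (a local $R$ with $R\neq\overline R\neq S$ is a counterexample), and \cite[Corollary 3.19]{Pic 10} applies to Boolean extensions, not arbitrary local FCP ones. The dichotomy you need does hold under the almost-Pr\"ufer hypothesis (which you do invoke when you note $R_M\subset S_M$ is again almost-Pr\"ufer), but the proper reference is to \cite{Pic 5} rather than to \cite[Corollary 3.19]{Pic 10} -- as you in fact anticipate at the end, and as the paper does directly.
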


\begin{proof} We have $\mathrm{Supp}_R(\overline R/R) \cap \mathrm{Supp}_R(S/\overline R)= \emptyset$  \cite[Proposition 4.16]{Pic 5}, with $\widetilde{R}$ the unique $U\in[R,S]$ such that $\overline R\cap U=R$ and $\overline U=S$.
 Then, Proposition \ref{8.12} gives the result.
\end{proof} 

 Given a ring $R$, recall that its  {\it Nagata ring} $R(X)$ is the localization $R(X) = T^{-1}R[X]$ of the ring of polynomials $R[X]$ with respect to the multiplicatively closed subset $T$ of all  polynomials with  content $R$.
In \cite[Theorem 32]{DPP4}, Dobbs and the authors proved that when $R\subset S$ is an extension, whose Nagata extension $R(X)\subset S(X)$  has FIP, the map $\varphi:[R,S]\to [R(X), S(X)]$ defined by $\varphi(T)= T(X)$ is an order-isomorphism. We show now that this map send the Loewy series of $R\subset S$ to the Loewy series of $R(X)\subset S(X)$.

\begin{proposition}\label{8.14} Let $R\subset S$ be an   extension such that $R(X)\subset S(X)$  has FIP. If   $\{S_i\}_{i=0}^n$ is the Loewy series of $[R,S]$, then $\{S_i(X)\}_{i=0}^n$ is the Loewy series of $[R(X),S(X)]$. In particular, $\pounds[R,S]=$
 $\pounds[R(X),S(X)]$.
\end{proposition}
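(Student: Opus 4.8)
The plan is to exploit the order-isomorphism $\varphi\colon [R,S]\to[R(X),S(X)]$, $\varphi(T)=T(X)$, established in \cite[Theorem 32]{DPP4}, together with the purely lattice-theoretic nature of the Loewy series. Since $R(X)\subset S(X)$ has FIP, it has FCP, and $[R(X),S(X)]$ is a complete Noetherian Artinian lattice which, via $\varphi$, is isomorphic (as a lattice) to $[R,S]$; in particular $R\subset S$ itself has FIP. The key observation is that the entire construction in Definition~\ref{8.1} — atoms, socle, and the transfinite iteration defining the Loewy series — depends only on the lattice structure of $[R,S]$ and not on any ring-theoretic data. A lattice isomorphism carries atoms to atoms (i.e. carries minimal subextensions to minimal subextensions, which is also recorded ring-theoretically), hence carries the supremum of all atoms to the supremum of all atoms, i.e.\ $\varphi(\mathcal S[R,S])=\mathcal S[R(X),S(X)]$.

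The main step is then a straightforward induction on $i$. For the base case, $\varphi(S_0)=\varphi(R)=R(X)$. Assume $\varphi(S_i)$ equals the $i$-th term of the Loewy series of $[R(X),S(X)]$. The interval $[S_i,S]$ in $[R,S]$ is mapped by the restriction of $\varphi$ isomorphically onto the interval $[\varphi(S_i),S(X)]=[S_i(X),S(X)]$ in $[R(X),S(X)]$ (here one uses that $\varphi(T)=T(X)$ is compatible with the formation of subextensions: $S_i(X)\subseteq T(X)$ iff $S_i\subseteq T$, and $S_i\subseteq S_i(X)[S] = S(X)$). Applying the socle construction inside these two isomorphic intervals and using $\varphi(\mathcal S[S_i,S])=\mathcal S[S_i(X),S(X)]$, we obtain that $\varphi(S_{i+1})=S_{i+1}(X)$ is the $(i+1)$-st term of the Loewy series of $[R(X),S(X)]$. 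Since $\varphi$ is an order-isomorphism, $S_n=S$ forces $S_n(X)=S(X)$ and no earlier term equals $S(X)$, so the two Loewy series have the same length, giving $\pounds[R,S]=\pounds[R(X),S(X)]$ and the claimed description $\{S_i(X)\}_{i=0}^n$.

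The only genuine point requiring care — and the one I expect to be the main (minor) obstacle — is verifying that $\varphi$ really does respect the interval structure and the notion of atom in the right way: one must know that for $T\in[R,S]$ the Nagata ring $T(X)$ coincides with the sub-$R(X)$-algebra of $S(X)$ it determines, and that $T\subset T'$ is a minimal extension if and only if $T(X)\subset T'(X)$ is minimal. Both facts are available: the order-isomorphism of \cite[Theorem 32]{DPP4} already transports the covering relation (atoms of intervals) faithfully in both directions, and $T(X)T'(X)=\Pi$ and $T(X)\cap T'(X)$ are computed coordinatewise through $\varphi$ because $\varphi$ is a lattice isomorphism. Once these compatibilities are in hand, the proof is the short induction sketched above.
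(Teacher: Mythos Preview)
Your proposal is correct and follows essentially the same approach as the paper's proof: both exploit the order-isomorphism $\varphi\colon[R,S]\to[R(X),S(X)]$ from \cite[Theorem 32]{DPP4}, observe that it carries atoms to atoms (the paper cites \cite[Theorem 3.4]{DPP3} for the preservation of minimality, while you deduce it from the lattice isomorphism directly), and then run the obvious induction on $i$ to conclude that $S_i(X)$ is the $i$-th term of the Loewy series of $[R(X),S(X)]$. The only difference is that you spell out the lattice-theoretic invariance more explicitly, but the underlying argument is identical.
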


\begin{proof} Since the map $\varphi:[R,S]\to [R(X), S(X)]$ defined by $\varphi(T)= T(X)$ is an order-isomorphism, it is also a lattice isomorphism. In particular, if $T,U\in[R,S]$ is such that $T\subset U$ is minimal, so is $T(X)\subset U(X)$ \cite[Theorem 3.4]{DPP3}. Let $\{S'_i\}_{i=0}^m$ be the Loewy series of $[R(X),S(X)]$. Then, an obvious induction on $i$ shows that $S'_i=S_i(X)$ for each $i\in\{0,\ldots,n\}$ since an atom $A$ of $S_i\subset S$ gives the atom $A(X)$ of $S_i(X)\subset S(X)$, and all atoms of $S_i(X)\subset S(X)$ are of this form. In particular, $n=\pounds[R,S]=m=\pounds[R(X),S(X)]$.
\end{proof} 

 \begin{remark}\label{8.141} If $R\subset S$ is an FCP extension such that $R(X)\subset S(X)$ is distributive, then $R(X)\subset S(X)$ has FIP because it has FCP by \cite[Theorem 3.9]{DPP3}. In this case, $R\subset S$ is distributive \cite[Proposition 3.6]{Pic 10}, and then has FIP.
\end{remark} 

 In Corollary \ref{8.112} and in Remark \ref{8.113}, we proved that for a ring extension $R\subset S$, the two socles $\mathcal S[R,S]$ and $\mathcal {MS}[R,S]$ may differ. Using idealization,  we may associate to some modules a ring extension. The Loewy length of some modules can be computed as the Loewy length of a ring extension. 
  Let $M$ be an $R$-module. The Loewy length of the $R$-module $M$ is denoted by $\lambda(M)$. We  consider the ring extension $R\subseteq R(+)M$, where $R(+)M$ is the idealization of $M$ in $R$.  

Recall that $R(+)M:=\{(r,m)\mid (r,m)\in R\times M\}$ is a commutative ring whose operations are defined as follows: 

$(r,m)+(s,n)=(r+s,m+n)$ \ \   and  \ \ \ $(r,m)(s,n)=(rs,rn+sm)$

Then  $(1,0)$ is the unit of $R(+)M$, and $R\subseteq R(+)M$ is a ring morphism defining $R(+)M$ as an $R$-module, so that we can identify any $r\in R$ with $(r,0)$. 

\begin{proposition}\label{8.15} Let $M$ be an $R$-module with finite length and let $S:=R(+)M$ be the idealization of $M$. Let $\{S_i\}_{i=0}^n$ be  the Loewy series of the ring extension $[R,S]$. Then $S_i=R(+)M_i$ for each $i\in\{0,\ldots,n\}$ where $\{M_i\}_{i=0}^n$ is the Loewy series of the lattice $\Lambda(M)$.  In particular, $\pounds[R,S]=\lambda(M)$.
\end{proposition}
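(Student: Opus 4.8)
The plan is to reduce the statement to the classical identification of the lattice $[R,R(+)M]$ with the submodule lattice $\Lambda(M)$, and then to exploit the fact that the socle — and hence the whole Loewy series — is a purely order-theoretic construction, so that it is transported by any order isomorphism.

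Concretely, I would first record the map $\theta\colon\Lambda(M)\to[R,S]$, $N\mapsto R(+)N$. Each $R(+)N$ is an $R$-subalgebra of $S=R(+)M$, and $\theta$ is order preserving with order-preserving inverse, since $N_1\subseteq N_2$ if and only if $R(+)N_1\subseteq R(+)N_2$ (the nontrivial implication follows by looking at the elements $(0,n)$). The one point that needs a genuine, though short, argument is surjectivity: given $T\in[R,S]$, set $N:=\{m\in M\mid (0,m)\in T\}$; since $R\subseteq T$ we have $(r,0)\in T$ for every $r\in R$, so $(r,0)(0,m)=(0,rm)$ shows that $N$ is an $R$-submodule, and from $(r,m)-(r,0)=(0,m)$ one gets $T=R(+)N$. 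Thus $\theta$ is an order isomorphism between the complete lattices $\Lambda(M)$ and $[R,S]$; in particular it preserves arbitrary suprema and infima, carries atoms to atoms, and carries an interval $[N,M]$ of $\Lambda(M)$ onto the interval $[R(+)N,S]$ of $[R,S]$. Since $M$ has finite length, $\Lambda(M)$ is both Artinian and Noetherian, hence so is $[R,S]$; that is, $R\subseteq S$ has FCP and its Loewy series $\{S_i\}_{i=0}^n$ is well defined.

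Next I would prove $S_i=R(+)M_i=\theta(M_i)$ by induction on $i$, using Definition~\ref{8.1}(3). For $i=0$ this is $\theta(0)=R$. Assuming $S_i=\theta(M_i)$, the fact that $\theta$ restricts to an order isomorphism $[M_i,M]\to[S_i,S]$, together with the fact that the socle of a complete lattice is the supremum of its atoms, shows that $\theta$ sends the socle of the sublattice $[M_i,M]$ — which is the term $M_{i+1}$ of the Loewy series of $\Lambda(M)$ — to the socle of the sublattice $[S_i,S]$, which by definition is $S_{i+1}$. Hence $S_{i+1}=\theta(M_{i+1})=R(+)M_{i+1}$, completing the induction. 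Finally, $\pounds[R,S]$ is the least $n$ with $S_n=S$, i.e. with $R(+)M_n=R(+)M$, i.e. with $M_n=M$, which is precisely $\lambda(M)$. There is no serious obstacle here; the only care needed is in checking that every $R$-subalgebra of $R(+)M$ has the form $R(+)N$ (surjectivity of $\theta$) and that $\theta$ is an order isomorphism, after which the socle and the entire Loewy series transport formally.
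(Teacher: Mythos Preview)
Your proof is correct and follows essentially the same approach as the paper: both rely on the order (equivalently, lattice) isomorphism $\Lambda(M)\to[R,R(+)M]$, $N\mapsto R(+)N$, and then transport the Loewy series by an easy induction. The only cosmetic differences are that you prove surjectivity of $\theta$ directly (the paper cites \cite{Pic 8}) and you invoke the abstract fact that an order isomorphism preserves suprema and atoms, whereas the paper writes out the compositum identity $(R(+)N)(R(+)N')=R(+)(N+N')$ and the minimal/simple correspondence explicitly.
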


\begin{proof} Since $M$ is an $R$-module with finite length, $R\subseteq R(+)M$ is an FCP quadratic extension   \cite[Propositions 2.2 and 2.3]{Pic 8} and \cite[Lemma 2]{HP}. Moreover, there is an order isomorphism $\psi:\Lambda(M)\cong [R,R(+)M]$ given by $\psi(N)=R(+)N$. In particular, for $N,N'\in\Lambda(M)$ such that $N\subset N'$, \cite[Proposition 2.8]{Pic 8} says that $R(+)N\subset R(+)N'$ is minimal if and only if $N'/N$ is a simple $R$-module. At last, for $N,N'\in\Lambda(M)$, obviously $(R(+)N) (R(+)N')=R(+)(N+N')$, so that $\psi$ is also a lattice isomorphism. Let $\{S_i\}_0^m$ be the Loewy series of $R\subseteq R(+)M$. Then, an easy induction on $i$ shows that $S_i=R(+)M_i$ for each $i\in\{0,\ldots,n\}$, where $\{M_i\}_{i=0}^n$ is the Loewy series of the lattice $\Lambda(M)$. Indeed, an atom $N'$ of $M/M_i$ is of the form $N/M_i$, where $N\in \Lambda(M)$ and  $M_i\subset N$, and gives the atom $R(+)N$ of $R(+)M_i\subset R(+)M$. Moreover, all atoms of $R(+)M_i\subset R(+)M$ are of this form.
 In particular, $\pounds[R,S]=\lambda(M)$.
\end{proof} 

\section{Finite  (distributive) field extension}

We first consider a finite field extension $k\subset L$ with separable closure $T$ and radicial closure $U$. The Loewy series of $k\subset L$ is linked to those of $k\subset T$ and  $k\subset U$. 
 If $k\subset L$ is a radicial extension, then $\mathrm{c}(k)$ is a prime number. 
 We recall that a  minimal field extension is either radicial, or separable \cite[Remark before Proposition 2.2, page 371]{Pic}. If $K$ is an atom of $[k,L]$, then $k\subset K$ is either radicial or separable. In the first case, we say that $K$ is a {\it radicial atom}, and in this case, $[K:k]=p=\mathrm{c}(k)$. In the second case, $K$ is a {\it separable atom}. 
 If $k\subset L$ is a finite field extension which is   not separable, let $T$ be its separable closure, so that $T\subset L$ is a radicial extension. In particular, $p:=\mathrm{c}(T)$ is a prime number, so that $\mathrm{c}(k)=p$.
Since a finite dimensional separable field extension has FIP, we consider in this section 
mainly FIP field extension. We found less results  for FCP not FIP field extensions.
 We begin with the following lemma. 

 \begin{lemma} \label{9.1} Let $k\subset L$ be an 
 FCP
  field extension with separable closure $T$,  radicial closure $U$ such that $T,U\not\in\{ k,L\}$. Let $T'\in[k,T]$ and $U'\in[k,U]$. Set $K:=T'U'$. Then: 
\begin{enumerate}
 \item $U'\subset K$ is separable and $T'\subset K$ is  radicial.

 \item If there exists $U''\in[k,U]$  such that  $ U''$ is a 
   radicial atom of $[U',U]$, then $KU''$ is 
     a radicial atom of $[K,L]$. Moreover, if $k\subset L$ has FIP, then  $KU''$ is the only radicial atom of $[K,L]$.

 \item The 
  separable atoms $V$ of   $[K,L]$ 
  are of the form $V=KT''$, where $T''$ is an atom of $T'\subset T$. In particular, $T''=V\cap T$.
 \end{enumerate}
 \end{lemma}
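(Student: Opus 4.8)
The plan is to exploit the tower structure $k \subseteq T' \subseteq T$ (separable side) and $k \subseteq U' \subseteq U$ (radicial side), together with the fact that in a finite field extension the separable and radicial closures of $k$ in $L$ are ``independent'' in the sense that $TU = L$ and $T \cap U = k$, and every residual/primitive behaviour of the compositum is governed by the tensor product $T \otimes_k U$. Concretely, I would first record the translation principle: if $A \subseteq B$ is separable (resp.\ radicial) and $C$ is any extension of $A$ inside a common overfield, then $C \subseteq BC$ is again separable (resp.\ radicial), and degrees multiply compatibly because separable and radicial extensions are linearly disjoint. This immediately yields part (1): $K = T'U'$, so $U' \subseteq K = U'T'$ is obtained by base change of the separable extension $T' \subseteq T'$... more precisely $U' \subseteq U'T'$ is the pushout of $k \subseteq T'$ along $k \subseteq U'$, hence separable; symmetrically $T' \subseteq T'U'$ is radicial.

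For part (2), assume $U''$ is a radicial atom of $[U',U]$, so $[U'':U'] = p = \mathrm{c}(k)$ and $U'\subset U''$ is purely inseparable. Then $KU'' = T'U'U'' = T'U''$ has $[KU'':K] = [U'':U']\cdot(\text{linear disjointness factor}) = p$ by linear disjointness of $U''$ and $K = T'U'$ over $U'$ (the extension $U' \subseteq K$ is separable, $U'\subseteq U''$ is radicial). Since $U' \subset U''$ is minimal radicial and $T'U''$ is a pushout along the separable extension $U' \subseteq K$, the extension $K \subseteq KU''$ is minimal radicial, i.e.\ a radicial atom. For the uniqueness clause under FIP: a radicial atom $V$ of $[K,L]$ has $[V:K]=p$ and $V^{p}\subseteq K$; since $K\subseteq L$ has the property that its radicial closure is $KU$ (as $U\subseteq L$ is radicial and contains... one checks the radicial closure of $K$ in $L$ is $KU$), every radicial atom lies in $[K, KU]$, and $[K,KU]$ is a chain by the displayed remark in the Introduction (``if $k\subset L$ is a radicial FIP field extension, then $[k,L]$ is a chain''), hence has a unique atom, which must be $KU''$ once we check $K \subsetneq KU'' \subseteq KU$ and $[KU'':K]=p$.

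For part (3), a separable atom $V$ of $[K,L]$ satisfies $K \subsetneq V$ with $K\subseteq V$ minimal separable. Because $K \subseteq L$ decomposes through its separable closure $KT$ in $L$ (here one argues the separable closure of $K$ in $L$ equals $KT = KT = T'U'T = TU' $; since $T\supseteq T'$ this is $TU'$ and $T \subseteq TU'$ is radicial while $K = T'U' \subseteq TU'$ is separable, so $TU'$ is indeed the separable closure of $K$ in $L$), every separable atom of $[K,L]$ lies in $[K, TU']$. Now $[K, TU'] = [T'U', TU']$ is, via linear disjointness of $U'$ with the separable tower $T'\subseteq T$ over $T'$, lattice-isomorphic to $[T',T]$ under $W \mapsto WU'$, with inverse $W' \mapsto W'\cap T$; under this isomorphism atoms correspond to atoms, so $V = KT''$ with $T''$ an atom of $[T',T]$, and $V\cap T = T''$ follows from the description of the inverse map.

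The main obstacle I expect is establishing cleanly the two ``closure'' identities used throughout — that the separable closure of $K=T'U'$ in $L$ is $TU'$ and its radicial closure is $T'U$ (equivalently $KU$) — and the attendant linear-disjointness/degree bookkeeping that makes the lattice isomorphism $[T',T]\cong[K,TU']$ (and its radicial analogue) work. These rest on the standard facts that a purely inseparable extension and a separable extension of a common field are linearly disjoint, and that separability and pure inseparability are preserved under base change; the care needed is to track that base-changing a \emph{minimal} extension along a linearly disjoint extension stays minimal (so atoms map to atoms), which is where one invokes that $K\otimes_{T'}T$ stays ``as small as possible'' because $U'$ contributes no new separable or inseparable subextensions over $k$ beyond those already in $T$ and $U$ respectively. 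Once these are in hand, parts (1)–(3) are short.
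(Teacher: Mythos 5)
Your argument is correct and, for part (3) especially, takes a route genuinely different from the paper's. For uniqueness in (2) the paper supposes a second radicial atom $W$ and notes that $[K,(KU'')W]$ would then be a radicial FIP extension with two distinct atoms, hence not a chain, a contradiction; you instead identify the radicial closure of $K$ in $L$ as $KU=T'U$ and appeal to $[K,KU]$ being a chain. Same underlying fact about radicial FIP extensions, cosmetic difference. For (3) the paper runs a direct minimality chase: for a separable atom $V$, take an atom $T''$ of $[T',V\cap T]$, note $K\subseteq U'T''\subseteq V$, rule out $U'T''=K$ (which would make $T'\subset T''$ both separable and radicial), and conclude $V=U'T''=KT''$ by minimality of $K\subset V$, with a similar chase for the converse. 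You instead set up a lattice isomorphism $[T',T]\cong[K,TU']$ via $W\mapsto WU'$, $W'\mapsto W'\cap T$, reduce using the identity (separable closure of $K$ in $L$) $=TU'$, and transport atoms across the isomorphism. This is structurally cleaner and gives more (it identifies the whole separable part of $[K,L]$, not just its atoms), but it hinges on a point you flag as the main obstacle and do not actually carry out: surjectivity of $W\mapsto WU'$, equivalently $W'=(W'\cap T)U'$ for every $W'\in[K,TU']$. That does hold here by a short argument: write $W'=K[\theta]$ with $\theta$ separable over $K$; since $T\subseteq TU'$ is radicial, $\theta^{p^n}\in T\cap W'$ for some $n$, and $K[\theta^{p^n}]\subseteq K[\theta]$ is both radicial and (as a subextension of $K\subseteq W'$) separable, hence an equality, so $W'=K[\theta^{p^n}]\subseteq (W'\cap T)U'$. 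With that filled in your proof is complete; it is slightly more conceptual than the paper's, at the cost of needing the closure identities for $K$ which the paper's bare-hands chase manages to avoid.
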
 

\begin{proof}  Since $k\subset U$ is radicial, $p:=\mathrm{c}(k)$ is a prime number.

(1) Obvious, because $T'$ (resp.: $U'$) is generated by a separable  element 
(resp.; radicial elements)
 over $k$, which  implies that this element generates a separable 
 (resp.; these elements generate a radicial)  extension $U'\subset K=U'T'$ (resp.; $T'\subset K=U'T'$).

(2) For the same reason, $K\subset KU''$ is radicial, and of degree $p$, since $[U'':U']=p$. In particular, $KU''$ is 
 a radicial atom of $K\subset L$.
Moreover, assume that $k\subset L$ has FIP.  
Then, $KU''$ is the 
 unique radicial atom  of $[K,L]$. 
Deny and let $W\in[K,L],\ W\neq KU''$, be 
 a radicial atom of $[K,L]$.
 Then $K\subset U''W$ is a finite radicial extension which is not a chain, a contradiction. 
  It follows that   $KU''$ is the only radicial  atom of $[K,L]$.

(3) Let 
$V$ be a separable atom of $[K,L]$. 
We have the following diagram, 
$$\begin{matrix}
U'            &\to&       K     & \to&         V    &\to&       L      \\
\uparrow&{} & \uparrow & {} & \uparrow & {} & \uparrow \\
 k           &\to&       T'     & \to&   V\cap T &\to&       T
 \end{matrix}$$ 
 where $V\cap T$ is the separable closure of $T'\subset V$. In particular, $V\cap T\subseteq V$ is radicial, with $V\cap T\neq T'$ because $T'\subset V$ is not radicial, and $V\cap T\neq V$ because  $T'\subset V$ is not separable. We claim that $V$ is of the form $KT''$, where $T''$ is an atom of $T'\subset T$. Since $V\cap T\in]T',T]$, there is some $T''\in]T',V\cap T]$ such that $T'\subset T''$ is minimal. Then, $T'\subset T''\subseteq V\cap T$ implies $K=U'T'\subseteq U'T''\subseteq U'(V\cap T)\subseteq V$. But $K\subset V$  minimal implies that either $U'T''=K\ (*)$ or $U'T''=V\ (**)$.  In case $(*)$, we have $T''\in[T',K]$, with $T'\subset K$ radicial, so that $T'\subset T''$ is both radicial and separable, a contradiction. So, only case $(**)$ holds and $V=U'T''=U'(V\cap T)$
 $=U'T'T''=KT''$, because $T'\subset T''$.
 In particular, $T''=V\cap T$, as the separable closure of $T'\subset V$ because $T''\subseteq U'T''=V$ is radicial.
 
 Conversely,    an atom $T''$ of $[T',T]$ is such that $T'\subset T''$ is minimal separable.  The inclusion $T'\subset T''$ leads to $K=U'T'\subseteq U'T''$ separable, with $T''\subset U'T''$ radicial  and $K\neq U'T''$ by  a similar reasoning as before. In particular, $T''=U'T''\cap T$ as the separable closure of $T'\subseteq U'T''$. Assume that $K\subset U'T''$ is not minimal, so that there exists $V\in]K,U'T''[$, with $K\subset V$ separable. Then, $T'=K\cap T\subseteq V\cap T\subseteq U'T''\cap T=T''$. As above, $V\cap T\neq T'$, because $T'\subseteq V$ is not radicial, which leads to $V\cap T=T''$, and then to $U'T''=U'(V\cap T)\subseteq V$, a contradiction. Then, $K\subset U'T''$ is  minimal.
\end{proof} 

 \begin{definition}\label{except}  \cite{GHe} A finite field extension $k\subset L$ is said to be {\it exceptional} if $k=L_r$ and $L_s\neq L$. 
\end{definition}

 \begin{proposition} \label{9.3} Let $k\subset L$ be an FIP field extension with separable closure $T$,  radicial closure $U$ and $T,U\not\in\{ k,L\}$. Let $\{S_i\}_{i=0}^n$ (resp. $\{T_i\}_{i=0}^m,\ \{U_i\}_{i=0}^r,\ \{T_i\}_{i=m}^s$) be  the Loewy series of $k\subset L$ (resp. $k\subset T,\ k\subset U,\ T\subset L$). Then: 
\begin{enumerate}
 \item If $i\leq\inf(m,r)$, then $S_i=T_iU_i$.

 \item If $m\leq r$, then  $S_i=T_{i+m}$ for $i\geq m$. In particular, $\pounds [k,L]=n=s-m$.
 
 \item If $r<m$ and $U\subset L$ is separable, then 
  $U$ is 
   the   complement of $T$. Moreover, 
 $S_i=U_rT_i=UT_i$ for $i\in\{r,\ldots,m\}$, with   $L=S_n=T_s=UT_n$.
  In particular, 
  $\pounds [k,L]=n=m=\pounds [k,T]$ and $s=\pounds [k,T]+\pounds [k,U]=m+r$.
 
  \item Assume that $r<m$ and $U\subset L$ is not separable. For $i\geq r$, the $S_i$'s are gotten by induction on $i$ in the following way: if $S_i\subset L$ is exceptional, then $S_{i+1}=S_iT_{i+1}$;  if $S_i\subset L$ is not exceptional, then $S_{i+1}=V_iT_{i+1}$, where $V_i$ is the unique 
    radicial atom of $[S_i, L]$.
    In this way, we obtain the family $\{S_i\}_{i=r}^t$, for the least $t$ such that $S_t\in[T_m,L]$. As there exists some $l$ such that $S_t=T_l$,  then $S_i=T_{l+i-t}$, for $i\geq t$. In particular, $n=s+t-l$.
\end{enumerate}
 \end{proposition}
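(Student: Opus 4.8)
The plan is to exploit the two canonical "coordinates" of a finite FIP field extension $k\subset L$, namely its separable closure $T$ and its radicial closure $U$, together with the behavior of atoms of an intermediate field $K=T'U'$ already pinned down in Lemma~\ref{9.1}. First I would handle (1) and (2), which follow from the same line of argument as the integral-versus-Pr\"ufer dichotomy of Proposition~\ref{8.12}: set $K=T'U'$ and use Lemma~\ref{9.1}(2)--(3) to describe the atoms of $[K,L]$. The radicial atom of $[K,L]$ (when it exists) is $KU''$ for $U''$ an atom of $[U',U]$, and the separable atoms are exactly the $KT''$ for $T''$ an atom of $[T',T]$. Taking the product of all atoms of $[K,L]$ therefore gives $\mathcal S[K,L]=(\text{product of }KT'')\cdot(KU'')$, which at the level of the Loewy series translates into $S_{i+1}=(S_i)T_{i+1}U_{i+1}$ as long as $i$ is below both $m$ and $r$. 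An easy induction starting from $S_0=k=T_0U_0$ then yields $S_i=T_iU_i$ for $i\le\inf(m,r)$, proving (1). For (2), once $i\ge m$ we have $T_i=T$, so $K$ already contains $T$, hence $[K,L]$ has no separable atom and $S_i\subset S_{i+1}$ is purely radicial; since a radicial FIP field extension is a chain, the remaining part of the Loewy series of $k\subset L$ from $S_m=T$ onward coincides with the Loewy series of $T\subset L$, i.e. $S_i=T_{i+m}$ for $i\ge m$, and $n=s-m$.

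Next I would treat (3). Here $r<m$ and $U\subset L$ is separable, so $\mathrm{Supp}$-disjointness of $T/k$ and $U/k$ (they are separable, resp. radicial, so they are "coprime" in the field-extension sense; concretely $[L:k]=[T:k][U:k]$ and $T\cap U=k$, $TU=L$) makes $U$ the complement of $T$ in $[k,L]$. This is exactly the setup of Lemma~\ref{8.11}/Proposition~\ref{8.12} transported to fields: $T\cap U=k$, $TU=L$. Then $S_i=T_iU_i$ for $i\le r$ by (1), and for $i\in\{r,\dots,m\}$ we have $U_i=U_r=U$ (the Loewy series of $k\subset U$ has already terminated), while $T_i\subset T_{i+1}$ remains separable; since $U\subset L$ is separable, $UT_i\subset UT_{i+1}$ is the "same" separable step, so $S_i=UT_i$, ending with $L=S_n=UT_n$ and $n=m$. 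That $s=m+r$ follows because the Loewy series of $T\subset L$ (which is $T=U^{\,?}\cdots$, a radicial-free, i.e. separable, extension isomorphic in lattice terms to $k\subset U$ shifted up) has length $r$, so $s=m+r$; more precisely $[T,L]\cong[k,U]$ as lattices via $W\mapsto W\cap U$, giving $\pounds[T,L]=\pounds[k,U]=r$ and hence $s=m+r$.

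Finally (4) is the genuinely hard case: $r<m$ and $U\subset L$ not separable, so $U$ is \emph{not} the complement of $T$, and the exceptional phenomenon of Definition~\ref{except} intervenes. The idea is to run the induction on $i\ge r$ directly on the structure of $[S_i,L]$. At stage $i$, $S_i$ contains $U$ (since $i\ge r$) but not $T$; the separable atoms of $[S_i,L]$ are the $S_iT''$ for $T''$ an atom of $[S_i\cap T,\,T]$ by Lemma~\ref{9.1}(3), and there is at most one radicial atom, call it $V_i$, by Lemma~\ref{9.1}(2). Whether a radicial atom exists is precisely the exceptionality condition: $S_i\subset L$ is exceptional exactly when the radicial part has "collapsed" in the sense that $(S_i)_r=S_i$ but $(S_i)_s\ne L$, equivalently there is no radicial atom over $S_i$; in that case $\mathcal S[S_i,L]$ is a pure product of separable atoms and $S_{i+1}=S_iT_{i+1}$. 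When $S_i\subset L$ is not exceptional there is exactly one radicial atom $V_i$, and $\mathcal S[S_i,L]=V_i\cdot\prod(\text{separable atoms})=V_iT_{i+1}$. One checks $S_i\cap T=T_i$ throughout (this is the analogue of $S'_i=(S_i)_M$ in Proposition~\ref{8.8}, proved by the same induction), so the separable atoms sit over $T_i$ and contribute $T_{i+1}$. The induction terminates at the first $t$ with $S_t\supseteq T$, i.e. $S_t\in[T_m,L]$, because each step strictly enlarges $S_i\cap T$ until it reaches $T$; writing $S_t=T_l$ and invoking (2) applied to the extension $T\subset L$ (now $S_t$ already contains $T$) gives $S_i=T_{l+i-t}$ for $i\ge t$ and $n=t+(s-l)=s+t-l$.

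The main obstacle I anticipate is case (4): proving that exceptionality of $S_i\subset L$ is equivalent to the \emph{absence} of a radicial atom over $S_i$, and verifying that the invariant $S_i\cap T=T_i$ is preserved by the non-exceptional step $S_{i+1}=V_iT_{i+1}$ (one must see that adjoining the radicial atom $V_i$ does not change the separable closure, which is where Lemma~\ref{9.1}(2)'s assertion $T''=V\cap T$ is used). Everything else is bookkeeping with the lattice isomorphisms $[T,L]\cong[k,U]$-type statements and the product-of-atoms description of the socle.
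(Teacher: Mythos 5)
Your overall strategy is the paper's: use Lemma~\ref{9.1} to describe the atoms of $[S_i,L]$ via those of $[T_i,T]$ and $[U_i,U]$ while $i\le\inf(m,r)$, then analyse the tails of the Loewy series depending on which of $T$, $U$ is ``used up'' first. Parts (1) and (3) of your sketch track the paper's argument faithfully (the paper obtains $TU=L$ in (3) by observing that $TU\subset L$ is simultaneously radicial, because it lies over $T$, and separable, because it lies over $U$).

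There is a genuine error in your treatment of (2). You write that the tail ``from $S_m=T$ onward'' coincides with the Loewy series of $T\subset L$. But at $i=m$ one has $S_m=T_mU_m=TU_m$, which is \emph{strictly} larger than $T$ since $U_m\ne k$. The correct identification is a degree count, which the paper carries out and which you omit: $T$ and $U_m$ are linearly disjoint over $k$ (separable versus radicial), so $[TU_m:T]=[U_m:k]=p^m$; since $[T,L]=\{T_j\}_{j=m}^s$ is the radicial chain with $[T_{m+j}:T]=p^j$, this forces $S_m=T_{2m}$. The tail of the Loewy series of $k\subset L$ from $S_m$ on is then the chain $[T_{2m},L]$, which gives $S_i=T_{i+m}$ and $n=s-m$. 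Notice that your claim $S_m=T=T_m$ is incompatible with the formula $S_i=T_{i+m}$ that you then write down: taken at face value it would yield $S_i=T_i$ for $i\ge m$ and $n=s$, not $n=s-m$. So this is not a cosmetic slip; the offset by $m$ is exactly the content of the degree computation.

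In part (4) you correctly identify the structure (exceptional versus non-exceptional at each step, with $V_i$ the unique radicial atom when it exists) and flag the two delicate points — that $S_i\cap T=T_i$ must be maintained, and that adjoining $V_i$ does not disturb the separable side — as ``one checks.'' That is a sketch rather than a proof, but the approach is the intended one and the paper's own treatment of (4) is also quite compressed on precisely those points.
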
 

\begin{proof} Set $p:=\mathrm{c}(k)$.

(1) We show by induction on $i$ that  $S_i=T_iU_i$ for any $i\leq\inf(m,r)$. For $i=0$, we have $k=S_0=T_0=U_0=T_0U_0$. Assume that for some   $i<\inf(m,r)$, we have $S_i=T_iU_i$. We are going to determine the atoms of $S_i\subset L$. Since any minimal field extension is either radicial or separable, it is enough to characterize any $V\in[S_i,L]$ 
  which is  either a radicial atom $(*)$, or a separable atom $(**)$.
   We use  Lemma \ref{9.1}. In case $(*),\ V=S_iU_{i+1}$, since 
     $ U_{i+1}$ is the only  radicial atom of  $[U_i,L]$.
     In case $(**),\  V=S_iT''$, where $T''$ is any atom of $T_i\subset T$. Because $S_{i+1}$ is the product of all atoms of $S_i\subset L$, we get that $S_{i+1}=S_iU_{i+1}T_{i+1}$. But, $S_i=T_iU_i$ with $T_i\subset T_{i+1}$ and $U_i\subset U_{i+1}$ leads to $S_{i+1}=U_{i+1}T_{i+1}$, and the induction is proved. 

(2) Assume that $m\leq r$. In view of (1), we have $S_m=T_mU_m=TU_m$ with   $S_m\subseteq L$  radicial. Indeed, $T_m\subset L$ is radicial, and then a chain, and $S_m\in[T_m,L]=\{T_i\}_{i=m}^s$. We have the following diagram
$$\begin{matrix}
  {} &       {}      & U_m & {}            &       {}        &         {} \\
 {} & \nearrow &   {}    & \searrow &       {}        &         {} \\
 k  &      {}       &   {}    & {}             & T_mU_m & \to & L \\
 {} & \searrow &   {}    & \nearrow &       {}        &         {} \\
 {} &      {}       & T_m & {}             &       {}        &         {}
\end{matrix}$$
with $k\subset U_m,\ T_m\subset T_mU_m$ both radicial, and $k\subset T_m,\ U_m\subset T_mU_m$ both separable. In particular, $[U_m:k]=[T_mU_m:T_m]=p^m$ since  
 $ U_{i+1}$ is a radicial atom of $[U_i,L]$
 for each $i\in\{0,\ldots,m-1\}$. It follows that $T_mU_m=S_m=T_{2m}$ and $S_i=T_{m+i}$ for any $i\in\{m,\ldots,n\}$, so that $m+n=s$
  because $S=S_n=T_s=T_{m+n}$.

(3) Assume that $m>r$, so that $S_r=T_rU_r=UT_r$.  Let $V$ be an atom of $S_r\subset L$. If 
 $V$ is a separable atom of $[S_r,L]$,
 then $V=S_rT''$, where $T''$ is an atom of $T_r\subset T$ in view of Lemma \ref{9.1}. In particular, $S_rT_{r+1}=U_rT_rT_{r+1}=U_rT_{r+1}$ is the product of the 
  separable atoms of $S_r\subset L$. 
  Since $U\subset L$ is separable,   
    there is no radicial atom in $[S_r,L]$.
    It follows that $S_{r+1}=U_rT_{r+1}$. An obvious induction shows that $S_i=U_rT_i$ for any $i\in\{ r,\ldots,m\}$. Since $TU\in[T,L]\cap [U,L]$, we get $L=TU$. 
     Therefore, $U$ is a complement of $T$ because $k=T\cap U$, which  is obviously unique.
     Moreover, $T=T_m\subset U_rT_m$ and $U=U_r\subset U_rT_m$, so that $L=U_rT_m=S_m=S_n=T_s=UT_n$
      since $U\subset L$ is separable and $T\subset L$ is radicial. Then, $n=m$.
 Moreover, $s=m+r$ since $[L:T]=[U:k]=p^r=p^{s-m}$.

(4) Assume that 
 $r<m$ and that 
 $U\subset L$ is not separable. We have $S_r=U_rT_r$ by (1). We get by induction on $i\geq r$ the $S_i$'s in the following way: Assume that $S_i$ is gotten. If $S_i\subset L$ is exceptional, there is no $V\in[S_i,L]$ such that $S_i\subset V$ is minimal radicial, then $S_{i+1}=S_iT_{i+1}$ as in  case (3). If $S_i\subset L$ is not exceptional, there is a   unique 
  radicial atom $V_i$ of  $[S_i,L]$. 
  But in this case, since $S_iT_{i+1}$ is the product of 
   separable 
  atoms of $[S_i,L]$, then $S_{i+1}=S_iT_{i+1}V_i=V_iT_{i+1}$ because $ S_i\subset V_i$. Since $\{S_i\}_{i=r}^n$ is an increasing sequence, there is a  least $t\geq r$ such that $S_t\in[T_m,L]$. Indeed, $T_m\subseteq S_m$, and there exists some $l\geq m$ such that $S_t=T_l$ because $[T_m,L]=\{T_i\}_{i=m}^s$. Then $S_i=T_{l+i-t}$, for $i\geq t$, 
  which implies that $s=l+n-t$, that is $n=s+t-l$. 
\end{proof} 

  \begin{proposition}\label{9.31} Let $k\subset L$ be an  FCP radicial field extension. Set $p:=\mathrm{c}(k)$ and $[L:k]=p^n$.
  \begin{enumerate}
\item Then $\mathcal S[k,L]=\{x\in L\mid x^p\in k\}$.  
  
  \item If $k\subset L$ has FIP, the Loewy series of $k\subset L$ is $[k, L]$ and $\pounds [k,L]=n$.
   \end{enumerate} 
 \end{proposition}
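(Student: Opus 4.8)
The plan is to prove the two statements in turn, deducing (2) from (1) together with Corollary~\ref{8.31}. For (1), set $E:=\{x\in L\mid x^p\in k\}$. Since $\mathrm{c}(k)=p$, the Frobenius map $x\mapsto x^p$ is a ring endomorphism of $L$, so $E$ is the preimage of $k$ under this map and hence a $k$-subalgebra of $L$, i.e. $E\in[k,L]$. First I would show $\mathcal S[k,L]\subseteq E$. Let $A$ be an atom of $[k,L]$; then $k\subset A$ is minimal and radicial (a subextension of the radicial extension $k\subset L$), so $A$ is a radicial atom and $[A:k]=p$. As $p$ is prime, $A=k[x]$ for any $x\in A\setminus k$, and since $x$ is purely inseparable over $k$ its minimal polynomial has the form $X^{p^e}-x^{p^e}$, of degree $p^e=[k[x]:k]=p$, whence $e=1$ and $x^p\in k$. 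Thus $x\in E$, and since $E$ is a $k$-subalgebra containing $x$ we get $A=k[x]\subseteq E$. As $\mathcal S[k,L]=\prod_{A\in\mathcal A}A$ with every $A\subseteq E$, this gives $\mathcal S[k,L]\subseteq E$.

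For the reverse inclusion, take $x\in E\setminus k$. Then $x$ is a root of $X^p-x^p\in k[X]$, so $1<[k[x]:k]\le p$, hence $[k[x]:k]=p$; being of prime degree, $k\subset k[x]$ has no proper intermediate field, so it is minimal and $k[x]$ is an atom of $[k,L]$. Therefore $k[x]\subseteq\mathcal S[k,L]$, so $x\in\mathcal S[k,L]$. Since $\mathcal S[k,L]$ also contains $k$ and every element of $E$ lies in $k\cup(E\setminus k)$, we conclude $E\subseteq\mathcal S[k,L]$, and hence $\mathcal S[k,L]=E$. (If $L=k$ everything is trivial, with $\mathcal A=\emptyset$ and $E=k$.)

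For (2), assume that $k\subset L$ has FIP. Then it is a radicial FIP field extension, so $[k,L]$ is a chain; a chain is a distributive FCP lattice, so Corollary~\ref{8.31}(1) yields $[k,L]=\{S_i\}_{i=0}^{\pounds[k,L]}$ (that is, the Loewy series of $k\subset L$ is $[k,L]$) together with $\pounds[k,L]=\ell[k,L]$. Finally, in a maximal chain $k=L_0\subset\cdots\subset L_\ell=L$ each step $L_j\subset L_{j+1}$ is minimal, hence minimal radicial of degree $p$, so $p^\ell=[L:k]=p^n$ and $\ell=n$; therefore $\pounds[k,L]=n$.

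The argument is essentially bookkeeping built on the recollections of Sections~2--3, so I do not expect a genuine obstacle. The one step that needs a little care is the identification of the atoms in (1): one must observe that a minimal radicial extension has degree exactly $p$ and that a degree-$p$ purely inseparable element $x$ has minimal polynomial $X^p-x^p$, which is precisely where the hypothesis $\mathrm{c}(k)=p$ enters.
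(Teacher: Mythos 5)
Your proof is correct and follows essentially the same route as the paper: identify the atoms as the degree-$p$ radicial subextensions $k[x]$ with $x^p\in k$, and use the Frobenius in characteristic $p$ to show $\mathcal S[k,L]$ is contained in $E:=\{x\in L\mid x^p\in k\}$. Your observation that $E$ is the preimage of $k$ under the Frobenius endomorphism, and hence is itself a $k$-subalgebra containing every atom and therefore their product, is a slightly tidier way to get $\mathcal S[k,L]\subseteq E$ than the paper's direct computation with sums of products of elements of atoms, but it is the same idea; part (2) likewise just spells out what the paper dismisses as obvious.
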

  
  \begin{proof}  (1) $\mathcal S[k,L]=\prod_{A\in\mathcal A}A$, where $\mathcal A$ is the set of atoms of $[k,L]$. Now, $A\in\mathcal A\Leftrightarrow k\subset A$ is minimal $\Leftrightarrow A=k[x]$ with $k\subset k[x]$  minimal radicial $\Leftrightarrow [k[x]:k]=p\Leftrightarrow A=k[x]$ with $x^p\in k$. In particular, $t^p\in k$ for any $t\in A$  since $\mathrm{c}(k)=p$. Let $y\in\mathcal S[k,L]$. Then $y$ is a finite sum of products $z:=x_1\cdots x_n$ of elements  of atoms of $[k,L]$. But $z^p=x_1^p\cdots x_n^p\in k$, which yields that $y^p\in k$. Then, $\mathcal S[k,L]\subseteq \{x\in L\mid x^p\in k\}$. Conversely, if $x^p\in k$ for some $x\in L\setminus k$, it follows that $k\subset k[x]$ is minimal, so that $k[x]\in\mathcal A$ which leads to $x\in \mathcal S[k,L]$. Then, $\mathcal S[k,L]= \{x\in L\mid x^p\in k\}$.
  
  (2) Obvious since $[k, L]$ is a chain.
  \end{proof} 
  
   \begin{remark}\label{9.32} Contrary to FCP separable field extensions which are always FIP, there exist FCP radicial field extensions which are not FIP. Take for instance $k:={\mathbb Z}/2{\mathbb Z}(Y,T)$, the field of rational functions over ${\mathbb Z}/2{\mathbb Z}$ in two indeterminates $Y$ and $T$. Let $y$ (resp. $t$) be the class of $Y$ (resp. $T$) in $k$ and let $\alpha$ (resp. $\beta$) be a zero of $F(X):=X^2-y$ (resp. $G(X):=X^2-t$). Then, $k\subset k[\alpha,\beta]$ is an FCP radicial extension of length 2. But, it has not FIP by \cite[Theorem 6.1 (8) (a)]{Pic 6}, because $|[k,k[\alpha,\beta]]|\geq 4$ gives $|[k,k[\alpha,\beta]]|=\infty$. 
 \end{remark} 
 
 Actually, the following proposition gives a characterization of FIP radicial field extensions.
 
  \begin{proposition} \label{9.33}  Let $k\subset L$ be an FCP radicial  field extension. The following conditions are equivalent:
 \begin{enumerate}

\item $k\subset L$ is a chain.

\item $k\subset L$ is distributive.

\item $k\subset L$ has FIP. 
 \end{enumerate}
\end{proposition}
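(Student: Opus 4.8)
The plan is to prove the cycle of implications $(1)\Rightarrow(2)\Rightarrow(3)\Rightarrow(1)$. The implication $(1)\Rightarrow(2)$ is immediate: a chain is trivially a distributive lattice (intersection and product of comparable elements are the smaller and larger of the two, and distributivity is routine to check), and this was already recorded earlier in the excerpt for integrally closed local extensions; no work is needed here.

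For $(2)\Rightarrow(3)$, I would invoke the fact stated at the opening of Section~3 that a distributive FCP extension has FIP (citing \cite[Theorem 4.28]{R}). So this implication is also essentially free, once we observe that $k\subset L$ is FCP by hypothesis and distributive by assumption.

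The real content is $(3)\Rightarrow(1)$: an FIP radicial field extension must be a chain. Here I would argue as follows. Suppose for contradiction that $[k,L]$ is not a chain. Then there exist two incomparable intermediate fields, hence (since $k\subset L$ has FCP) two distinct atoms $A_1, A_2$ of $[k,L]$. Each $k\subset A_i$ is minimal radicial, so $[A_i:k]=p$ where $p=\mathrm{c}(k)$, and $A_i = k[x_i]$ with $x_i^p\in k$. Now consider $K:=A_1A_2 = k[x_1,x_2]$. This is a finite radicial extension of $k$ of degree $p^2$, and it contains the two distinct degree-$p$ subfields $A_1$ and $A_2$. I would then show $k\subset K$ cannot have FIP: because $K$ is radicial of degree $p^2$, the set $\{k + k(a_1 x_1 + a_2 x_2)\mid (a_1:a_2)\in \mathbb{P}^1(k)\}$ produces infinitely many intermediate fields when $k$ is infinite, and when $k$ is finite one appeals instead to the dichotomy of \cite[Theorem 6.1 (8)(a)]{Pic 6} already cited in Remark~\ref{9.32}: an FCP radicial extension that is not a chain has $|[k,K]|\geq 4$, which forces $|[k,K]| = \infty$. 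Either way $k\subset K$ fails FIP, and since $K\in[k,L]$ this contradicts the assumption that $k\subset L$ has FIP (an extension with FIP has FIP on every subextension). Hence $[k,L]$ is a chain.

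The main obstacle is making the ``not a chain $\Rightarrow$ infinitely many intermediate fields'' step clean and self-contained for the radicial case; the cleanest route is to reduce immediately to degree $p^2$ via two atoms and then cite the already-invoked result of \cite{Pic 6} rather than reproving the infinitude by hand, so that the argument parallels Remark~\ref{9.32} exactly. One should double-check that reducing to the compositum of two atoms genuinely captures the failure of the chain condition --- this holds because in an FCP lattice, if $[k,L]$ is not a chain there are two incomparable elements, and walking down minimal steps from each yields two distinct atoms, whose compositum is the desired degree-$p^2$ witness. With that in place, all three implications close and the proof is complete.
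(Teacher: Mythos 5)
Your implications $(1)\Rightarrow(2)$ and $(2)\Rightarrow(3)$ are both fine and essentially match the paper (which cites \cite[Proposition 2.3]{Pic 10} for the first and the fact, recalled at the start of Section~3, that distributive FCP extensions have FIP for the second; the paper actually proves $(1)\Rightarrow(3)$, $(3)\Rightarrow(1)$ and $(2)\Rightarrow(1)$ separately rather than a cycle, but your cycle is a legitimate alternative organization).

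The problem is in your $(3)\Rightarrow(1)$ step, and it is exactly where you expressed doubt. The claim ``$[k,L]$ is not a chain, hence there are two distinct atoms $A_1,A_2$ of $[k,L]$'' is false, and so is your proposed justification ``walking down minimal steps from each incomparable element yields two distinct atoms'': two incomparable intermediate fields $T$ and $U$ can perfectly well sit above a \emph{common} unique atom (picture $k\subset K\subset\{L_1,L_2\}\subset L$ with $K$ the only element covering $k$; here walking down from either $L_i$ lands on the same $K$). What is true, and what the paper uses in its proof of $(2)\Rightarrow(1)$, is the weaker statement that a non-chain Artinian lattice contains \emph{some} $K_1$ with two distinct covers $K_2\neq K_3$: take $T,U$ incomparable, set $K_1:=T\cap U$, and pick $K_2\in\,]K_1,T]$ and $K_3\in\,]K_1,U]$ minimal over $K_1$; then $K_2\cap K_3\subseteq T\cap U=K_1$ forces $K_2\neq K_3$. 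With this correction your argument goes through: $[K_2K_3:K_1]=p^2$, $|[K_1,K_2K_3]|\geq 4$, so \cite[Theorem 6.1 (8)(a)]{Pic 6} gives $|[K_1,K_2K_3]|=\infty$, contradicting FIP of $k\subset L$ (hence of the subextension $K_1\subset L$). As a side remark, the case split ``$k$ finite vs.\ infinite'' in your alternative argument is unnecessary: finite fields are perfect, so a nontrivial radicial extension automatically has infinite base field.
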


\begin{proof} (1) $\Rightarrow$ (2) by \cite[Proposition 2.3]{Pic 10}.

(1) $\Rightarrow$ (3) since $k\subset L$ has  FCP.

(3) $\Rightarrow$ (1) by \cite[Lemma 4.1]{Pic 10}.

(2) $\Rightarrow$ (1) Assume that $k\subset L$ is distributive and not a chain. There exist $K_1,K_2,K_3\in[k,L]$ such that $K_1\subset K_i$ is minimal for $i=2,3$ with $K_2\neq K_3$. We get that $|[K_1,K_2K_3]|=4$ because $\ell[K_1,K_2K_3]|=2$. Indeed, $k\subset L$ is distributive, so that $[K_1,K_2K_3]$ does not contain a diamond \cite[Theorem 1, page 59]{G}, a contradiction with  \cite[Theorem 6.1 (8) (a)]{Pic 6}.
\end{proof}

 The last case to consider is the case of a finite separable field extension. We recall here some results gotten in \cite{Pic 10}.

Let $L:=k[x]$ be a finite separable (whence FIP) field extension of $k$   and $f(X)\in k_u[X]$ (the set of monic polynomials of $k[X]$) 
 {\it the} 
 minimal polynomial of $x$ over $k$. 
If $g(X)\in L_u[X]$ divides $f(X)$, we denote by $K_g$ the $k$-subalgebra of $L$ generated by the coefficients of $g$.  For any $K\in[k,L]$, we denote by $f_K(X)\in K_u[X]$ the minimal polynomial of $x$ over $K$. The proof of the Primitive Element Theorem shows that $K=K_{f_K}$.
Of course, $f_K(X)$ divides $f (X)$ in $K[X]$ (and in $L[X]$). We set $\mathcal D:=\{f_K\mid K\in[k,L]\}$. Then, $(\mathcal D,\leq)$ is a poset for the order $\leq$ defined as follows: if $f_K,f_{K'}\in\mathcal D$, then $f_K\leq f_{K'}$ if and only if $f_K$ divides $f_{K'}$ in $L[X]$, which is equivalent to $ K'\subseteq K$ by \cite[Lemma 4.7]{Pic 10}. In particular,  $\inf$ is  {\rm gcd} in $\mathcal D$.   

\begin{corollary}\label{4.251} \cite[Corollary 4.9]{Pic 10} The map  $\varphi :[k,L]\to\mathcal D$ defined by $K\mapsto f_K$ is a reversing order bijection such that  $f_{KK'}=\inf(f_K,f_{K'})$ for $K,K'\in[k,L]$.
\end{corollary}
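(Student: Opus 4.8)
The plan is to establish the three asserted properties of the map $\varphi: K \mapsto f_K$ in sequence: that it is well-defined and order-reversing, that it is a bijection onto $\mathcal D$, and finally the formula $f_{KK'} = \inf(f_K, f_{K'})$. Since the statement is quoted from \cite[Corollary 4.9]{Pic 10}, the cleanest route is to cite that reference directly; but for a self-contained argument, here is how I would proceed. The set $\mathcal D = \{f_K \mid K \in [k,L]\}$ is the image of $\varphi$ by definition, so surjectivity is automatic. For injectivity, recall from the discussion preceding the statement (an invocation of the Primitive Element Theorem) that $K = K_{f_K}$, i.e. $K$ is recovered from $f_K$ as the $k$-subalgebra of $L$ generated by the coefficients of $f_K$; hence $f_K = f_{K'}$ forces $K = K_{f_K} = K_{f_{K'}} = K'$. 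This gives that $\varphi$ is a bijection $[k,L] \to \mathcal D$.

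Next I would verify that $\varphi$ reverses order. By the definition of $\leq$ on $\mathcal D$ recalled just before the statement, $f_K \leq f_{K'}$ means $f_K \mid f_{K'}$ in $L[X]$, and by \cite[Lemma 4.7]{Pic 10} this is equivalent to $K' \subseteq K$. So $K \subseteq K'$ in $[k,L]$ if and only if $f_{K'} \mid f_K$, i.e. $f_{K'} \leq f_K$ in $\mathcal D$; thus $\varphi$ is an order-reversing bijection. A bijective order-reversing map between posets automatically carries suprema to infima and vice versa, provided these exist. Since $[k,L]$ is a complete lattice (indeed an FIP extension, being finite separable), the supremum $KK'$ of $\{K,K'\}$ exists, and therefore $\varphi(KK') = f_{KK'}$ must be the infimum of $\{\varphi(K), \varphi(K')\} = \{f_K, f_{K'}\}$ in $(\mathcal D, \leq)$. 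Since, as noted before the statement, $\inf$ in $\mathcal D$ is computed as $\gcd$ in $L[X]$, this yields $f_{KK'} = \inf(f_K, f_{K'}) = \gcd(f_K, f_{K'})$.

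The only genuine subtlety — and the step I would expect to demand the most care — is the identity $K = K_{f_K}$ underlying injectivity, together with the equivalence in \cite[Lemma 4.7]{Pic 10} relating divisibility of minimal polynomials to inclusion of fields. Both of these are consequences of the Primitive Element Theorem and the structure of the field diagram, but they are where the real content sits; the order-reversal and the transport of lattice operations are then purely formal. Accordingly, the proof is essentially a matter of assembling \cite[Lemma 4.7]{Pic 10}, the remark $K = K_{f_K}$, and the completeness of $[k,L]$, so I would simply write: this is \cite[Corollary 4.9]{Pic 10}, with the argument above recorded for the reader's convenience.
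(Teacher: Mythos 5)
The paper itself does not prove this corollary; it is quoted verbatim from \cite[Corollary 4.9]{Pic 10}, and the surrounding text supplies exactly the facts you assemble: $K=K_{f_K}$ from the Primitive Element Theorem, and the equivalence ``$f_K\mid f_{K'}$ in $L[X]$ iff $K'\subseteq K$'' from \cite[Lemma 4.7]{Pic 10}. Your reconstruction is correct in substance and uses the right ingredients.

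One imprecision is worth flagging. You write that ``a bijective order-reversing map between posets automatically carries suprema to infima.'' As stated, that is false: an order-reversing bijection need only satisfy $a\leq b\Rightarrow \varphi(a)\geq\varphi(b)$, and such a map can fail to transport bounds (e.g.\ a bijection from a diamond onto a four-element chain sending the bottom to the top and the two incomparable middle elements to the two middle chain elements is order-reversing but does not send the supremum of the middle pair to the infimum of their images). What you actually establish, via the biconditional from \cite[Lemma 4.7]{Pic 10}, is the stronger property that $\varphi$ is a \emph{dual order isomorphism} --- $K\subseteq K'$ \emph{if and only if} $f_{K'}\leq f_K$ --- and a dual order isomorphism does carry suprema to infima. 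So the conclusion $f_{KK'}=\inf(f_K,f_{K'})$ is sound; just invoke the correct general principle (anti-isomorphism of posets, not mere order-reversal) when you appeal to it.
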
 

 \begin{proposition}\label{9.4} Let $k\subset L:=k[x]$ be a finite separable field extension of $k$ and $K\subset K'$ a subextension. Then, $K\subset K'$ is minimal if and only if $f_{K'}$ is a maximal proper divisor of $f_K$ in $\mathcal D$.
\end{proposition}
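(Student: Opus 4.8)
The plan is to transfer the minimality statement through the bijection $\varphi\colon [k,L]\to\mathcal D$ of Corollary~\ref{4.251}, using the fact that $\varphi$ is an order-reversing bijection. Since $\varphi$ reverses order, a cover relation $K\subset K'$ in $[k,L]$ (which is exactly what ``$K\subset K'$ is minimal'' means, because $[K,K']=\{K,K'\}$) corresponds under $\varphi$ to a cover relation $f_{K'}\le f_K$ in $(\mathcal D,\le)$, i.e. $f_{K'}$ is a maximal proper element below $f_K$. Recalling that $\le$ on $\mathcal D$ is divisibility in $L[X]$ and that $\inf$ in $\mathcal D$ is $\mathrm{gcd}$, being a maximal proper element below $f_K$ is precisely being a maximal proper divisor of $f_K$ in $\mathcal D$.

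Concretely I would argue as follows. First recall from Corollary~\ref{4.251} that $\varphi$ is a bijection and that $f_K\le f_{K'}\iff K'\subseteq K$; hence $K\subsetneq K'\iff f_{K'}\lneq f_K$ in $\mathcal D$. Now suppose $K\subset K'$ is minimal, so $[K,K']=\{K,K'\}$, and suppose for contradiction that $f_{K'}$ is not a maximal proper divisor of $f_K$ in $\mathcal D$: then there is $g\in\mathcal D$ with $f_{K'}\lneq g\lneq f_K$. Write $g=f_M$ for the corresponding $M\in[k,L]$; order-reversal gives $K\subsetneq M\subsetneq K'$, contradicting minimality. Conversely, if $f_{K'}$ is a maximal proper divisor of $f_K$ in $\mathcal D$, then $f_{K'}\lneq f_K$, so $K\subsetneq K'$; and if $M\in[k,L]$ satisfied $K\subsetneq M\subsetneq K'$, then $f_{K'}\lneq f_M\lneq f_K$ would contradict maximality. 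Hence $[K,K']=\{K,K'\}$, i.e. $K\subset K'$ is minimal.

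One point to be careful about is the exact meaning of ``maximal proper divisor of $f_K$ in $\mathcal D$'': it should be read as a maximal element of $\{g\in\mathcal D\mid g\lneq f_K\}$ with respect to the order $\le$ of $\mathcal D$ (equivalently, divisibility in $L[X]$ restricted to $\mathcal D$), not maximality among all divisors of $f_K$ in $L[X]$ — the latter need not lie in $\mathcal D$. Since every $M\in[k,L]$ gives $f_M\in\mathcal D$ and conversely, and since $\varphi$ is an honest bijection, this distinction causes no trouble: the covering relations of the two posets match up exactly. So the main (and only) obstacle is bookkeeping about which order is in play; once the order-reversing bijection of Corollary~\ref{4.251} is invoked, the proof is immediate, and indeed this proposition is essentially a restatement of that corollary at the level of cover relations.
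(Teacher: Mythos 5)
Your proof is correct and follows essentially the same route as the paper: transfer minimality through the order-reversing bijection $\varphi$ of Corollary~\ref{4.251}, so that cover relations in $[k,L]$ correspond to cover relations in $(\mathcal D,\le)$. Your closing caveat — that ``maximal proper divisor of $f_K$ in $\mathcal D$'' must be read as maximality within the poset $\mathcal D$ rather than among all divisors in $L[X]$ — is a sensible clarification of exactly what the paper's terse argument implicitly assumes.
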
 

\begin{proof} In view of Corollary \ref{4.251}, we have $K\subset K'$ if and only if $f_{K'}$ divides $f_K$ in $\mathcal D$. Moreover, $K\subset K'$ is minimal if and only if there is no $K''\in[k,L]$ such that $K\subset K''\subset K'$  if and only if there is no proper divisor of $f_K$ divided strictly by $f_{K'}$ if and only if $f_{K'}$ is a maximal proper divisor of $f_K$ in $\mathcal D$.
\end{proof}

 \begin{proposition}\label{9.5} Let  $k\subset L:=k[x]$ be a finite separable field extension of $k$. The Loewy series $\{S_i\}_{i=0}^n$ of $k\subset L$ is gotten by induction in the following way: $S_0=k$ and for $i\in\{0,\ldots,n-1\}$, we have $S_{i+1}=K_g$, where $g=\inf\{h\in\mathcal D\mid h$ is a maximal proper divisor of $f_{S_i}$ in $\mathcal D\}$.
\end{proposition}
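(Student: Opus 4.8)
The plan is to translate the lattice-theoretic definition of the socle through the order-reversing bijection $\varphi:[k,L]\to\mathcal D$ of Corollary~\ref{4.251}, and then proceed by induction on $i$. The key observation is that under $\varphi$, an atom of $[S_i,S]$ corresponds to a \emph{maximal proper divisor} of $f_{S_i}$ in $\mathcal D$ (Proposition~\ref{9.4} applied to the subextension $S_i\subset S$), and the product of $R$-subalgebras corresponds to $\gcd$ in $\mathcal D$ (the $\inf$ statement in Corollary~\ref{4.251}). Since $\mathcal S[S_i,L]=\prod_{A}A$ where $A$ ranges over the atoms of $[S_i,L]$, applying $\varphi$ gives $f_{\mathcal S[S_i,L]}=\inf\{f_A\mid A \text{ atom of }[S_i,L]\}=\inf\{h\in\mathcal D\mid h \text{ a maximal proper divisor of }f_{S_i} \text{ in }\mathcal D\}$. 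Writing $g$ for this $\inf$, we get $S_{i+1}=\mathcal S[S_i,L]=K_{f_{S_{i+1}}}=K_g$, the last equality because $K=K_{f_K}$ for every $K\in[k,L]$ (recalled from the Primitive Element Theorem discussion just before Corollary~\ref{4.251}).

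More precisely, I would set up the induction as follows. The base case is $S_0=k$ by Definition~\ref{8.1}(3). For the inductive step, assume $S_i$ is known (with $S_i\neq L$, equivalently $f_{S_i}$ is not constant/has proper divisors). By Definition~\ref{8.1}(3), $S_{i+1}=\mathcal S[S_i,L]=\prod_{A\in\mathcal A_i}A$ where $\mathcal A_i$ is the set of atoms of $[S_i,L]$. Each such $A$ satisfies: $S_i\subset A$ is minimal, hence by Proposition~\ref{9.4} (with the roles $K:=S_i$, $K':=A$) the polynomial $f_A$ is a maximal proper divisor of $f_{S_i}$ in $\mathcal D$; conversely every maximal proper divisor of $f_{S_i}$ in $\mathcal D$ is $f_A$ for a unique $A\in\mathcal A_i$, by the bijectivity in Corollary~\ref{4.251} together with Proposition~\ref{9.4}. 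Now apply $\varphi$ to the product: by the formula $f_{KK'}=\inf(f_K,f_{K'})$ extended to finite products (an immediate iteration, using that $[k,L]$ is finite since $k\subset L$ has FIP), $f_{S_{i+1}}=\inf\{f_A\mid A\in\mathcal A_i\}=\inf\{h\in\mathcal D\mid h \text{ maximal proper divisor of }f_{S_i}\}=:g$. Finally $S_{i+1}=K_{f_{S_{i+1}}}=K_g$, completing the step.

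The main obstacle, such as it is, is purely bookkeeping: one must be careful that the "atoms of $[S_i,L]$" are characterized via Proposition~\ref{9.4} applied to the subextension $S_i\subset L$ rather than to $k\subset L$, and that Corollary~\ref{4.251} is used in its relative form. Strictly, Corollary~\ref{4.251} is stated for $[k,L]$ and $\mathcal D=\{f_K\mid K\in[k,L]\}$, but since $[S_i,L]\subseteq[k,L]$ and $L=S_i[x]$ as well (the primitive element $x$ still generates $L$ over $S_i$), the same correspondence restricts: $\varphi$ maps $[S_i,L]$ bijectively and order-reversingly onto $\{h\in\mathcal D\mid h \text{ divides }f_{S_i}\}$, and still sends products to $\gcd$'s. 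Once this restriction is noted, the computation of $f_{S_{i+1}}$ is forced. I would also remark that the process terminates at $n=\pounds[k,L]$ because $k\subset L$ has FCP, so the Loewy series is finite by Definition~\ref{8.1}(4); no separate argument is needed.
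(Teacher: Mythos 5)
Your proof is correct and follows essentially the same route as the paper: identify the atoms of $[S_i,L]$ with the maximal proper divisors of $f_{S_i}$ via Proposition~\ref{9.4}, use the $\inf$ formula of Corollary~\ref{4.251} to convert the product of atoms into a gcd, and finish with $K=K_{f_K}$. The cautionary remark about needing a ``relative form'' of Corollary~\ref{4.251} is not actually an issue, since Proposition~\ref{9.4} is already stated for an arbitrary subextension $K\subset K'$ and the $\inf$ formula applies to any $K,K'\in[k,L]$, so no restriction argument is needed.
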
 

\begin{proof} For a given $S_i$, we have $S_{i+1}=\prod \{V\mid S_i\subset V$ minimal$\}$. In view of Proposition \ref{9.4}, $S_i\subset V$ is minimal $\Leftrightarrow f_V$ is a maximal proper divisor of $f_{S_i}$. It follows from Corollary \ref{4.251} that $S_{i+1}=K_g\Leftrightarrow g=f_{S_{i+1}}=\inf\{h\in\mathcal D\mid h$ is a maximal proper divisor of $f_{S_i}\}$.
\end{proof} 

 \begin{proposition}\label{9.6} Let $k\subset L:=k[x]$ be a minimal separable field extension. Let $N$ be the normal closure of $L$. Then, $\pounds [k,N]=1$. Moreover, the following conditions are equivalent:
\begin{enumerate}
 \item $k\subset N$ is distributive.
  \item $k\subset N$ is Boolean.
   \item $k\subset N$ is a cyclic extension and $[N:k]$ is square-free.
   \end{enumerate}
\end{proposition}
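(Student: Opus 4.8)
The plan is to translate everything into Galois theory. Since $k\subset L$ is separable, the normal closure $N$ is a finite Galois extension of $k$; set $G:=\mathrm{Gal}(N/k)$ and $H:=\mathrm{Gal}(N/L)$. Because $k\subset L$ has no proper intermediate field, $H$ is a maximal subgroup of $G$; because $N$ is the normal closure of $L$ (equivalently, $N$ is the compositum of the $k$-conjugates of $L$), the core of $H$ in $G$ is trivial, so $G$ acts faithfully on the coset set $G/H$. By the fundamental theorem of Galois theory, $K\mapsto\mathrm{Gal}(N/K)$ is an order-reversing lattice isomorphism of $[k,N]$ onto the lattice of subgroups of $G$, carrying the product (compositum) to intersection.

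For the assertion $\pounds[k,N]=1$ I would show directly that $\mathcal S[k,N]=N$. For each $\sigma\in G$ one has $\sigma(L)\subseteq\sigma(N)=N$, and $\sigma$ restricts to a field isomorphism $L\to\sigma(L)$ fixing $k$, hence induces a lattice isomorphism $[k,L]\cong[k,\sigma(L)]$; thus $k\subset\sigma(L)$ is again minimal, i.e.\ $\sigma(L)$ is an atom of $[k,N]$. Since $N=\prod_{\sigma\in G}\sigma(L)$ by definition of the normal closure, $N$ is a product of atoms, so $N\subseteq\mathcal S[k,N]\subseteq N$; therefore $S_1=\mathcal S[k,N]=N$ and $\pounds[k,N]=1$. (Equivalently, via the Galois correspondence $\mathcal S[k,N]$ is the fixed field of the Frattini subgroup $\Phi(G)$, and $\Phi(G)$ is contained in the core of $H$, which is trivial.)

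For the equivalences, (2)$\Rightarrow$(1) is immediate, a Boolean lattice being distributive. For (3)$\Rightarrow$(2): if $G$ is cyclic of square-free order $n$, the lattice of subgroups of $G$ is isomorphic to the lattice of divisors of $n$, which is Boolean; transporting complements through the Galois correspondence shows each $K\in[k,N]$ has a complement, so $k\subset N$ is Boolean. The key implication is (1)$\Rightarrow$(3): if $k\subset N$ is distributive then the subgroup lattice of $G$ is distributive, so $G$ is cyclic by Ore's theorem. A cyclic (hence abelian) group acting faithfully and transitively on $G/H$ acts regularly, forcing $H=\{1\}$, so $N=L$ and $|G|=[L:k]$. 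As $H=\{1\}$ is maximal in $G$, the group $G$ has no proper nontrivial subgroup, hence $|G|=[N:k]$ is prime, in particular square-free; this gives (3).

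The main obstacle is the logical content of (1)$\Rightarrow$(3) rather than any computation: distributivity of $[k,N]$ forces $G$ cyclic, and cyclicity combined with faithful transitivity of the action on $G/H$ (which encodes $k\subset L$ being minimal and $N$ being its normal closure) collapses the situation to $N=L$ of prime degree. Everything else is bookkeeping with the Galois correspondence plus two standard facts about subgroup lattices of finite groups---Ore's characterization of those that are distributive, and the fact that $\mathbb Z/n$ has a Boolean subgroup lattice exactly when $n$ is square-free---which can be quoted from the references used elsewhere in this section.
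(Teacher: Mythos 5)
Your proof is correct, and it takes a genuinely different route to the equivalences than the paper does. For $\pounds[k,N]=1$ the argument is the same (conjugates of $L$ are atoms whose compositum is $N$). For the equivalences, the paper closes the cycle by showing $(1)\Leftrightarrow(2)$ via Lemma~\ref{8.2} (the lattice-theoretic fact that, in a distributive lattice of finite length, the interval from the bottom to the socle is Boolean, so $\pounds=1$ plus distributivity forces Boolean) and then $(2)\Leftrightarrow(3)$ by citing the classification of Boolean Galois extensions. You instead run $(1)\Rightarrow(3)\Rightarrow(2)\Rightarrow(1)$, with the substance in $(1)\Rightarrow(3)$: Ore's theorem gives $G$ cyclic, and a cyclic group acting faithfully and transitively on $G/H$ acts regularly, so $H=\{1\}$; then maximality of $H$ forces $|G|$ prime, which is stronger than (3) but implies it. This is a clean group-theoretic shortcut that also reveals a feature the proposition leaves implicit: under its hypotheses the three conditions are each equivalent to $N=L$ and $[N:k]$ prime (indeed, once $G$ is abelian every subgroup is normal, so the normal closure of $L$ is $L$ itself, collapsing $H$ to $\{1\}$). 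The paper's route buys modularity (it reuses the socle/Boolean machinery already set up in Section~3 and the cited theorem from \cite{Pic 10}), whereas yours is self-contained inside Galois/group theory and exposes the prime-degree consequence directly.
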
 

\begin{proof} Let $G$ be the Galois group of $k\subset N$. Then $N=k[\{\sigma(x)\mid\sigma\in G\}]=\prod_{\sigma\in G}k[\sigma(x)]$. Since  $k\subset k[\sigma(x)]$  is obviously minimal for any $\sigma\in G$,  it follows that $N=\mathcal S[k,N]$, so that $\pounds [k,N]=1$.

(1) $\Leftrightarrow$ (2) by Lemma \ref{8.2} and by definition of a Boolean extension.

(2) $\Leftrightarrow$ (3) by \cite[Theorem 4.19]{Pic 10}.
\end{proof} 

 \begin{corollary}\label{9.7} Let $k\subset L$ be a finite Galois extension with Galois group $G$. The socle   $\mathcal S[k,L]$ is globally invariant by the elements of $G$. Moreover, $k\subset \mathcal S[k,L]$ is Galois.
\end{corollary}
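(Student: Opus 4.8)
The plan is to exploit the Galois action on the whole lattice $[k,L]$ and the lattice-theoretic characterization of the socle. First I would observe that each $\sigma\in G$ induces, by restriction of the ambient $k$-automorphism $\sigma\colon L\to L$, an order-automorphism $\sigma_*\colon[k,L]\to[k,L]$ sending $K\mapsto\sigma(K)$; since $\sigma$ is a ring automorphism fixing $k$, this is a lattice automorphism (it preserves both intersections and products). A lattice automorphism permutes the atoms of $[k,L]$ among themselves: indeed $k\subset K$ is minimal if and only if $k=\sigma(k)\subset\sigma(K)$ is minimal, so $\sigma_*(\mathcal A)=\mathcal A$. Hence
$$
\sigma\bigl(\mathcal S[k,L]\bigr)=\sigma\Bigl(\prod_{A\in\mathcal A}A\Bigr)=\prod_{A\in\mathcal A}\sigma(A)=\prod_{A\in\mathcal A}A=\mathcal S[k,L],
$$
which is the first assertion: $\mathcal S[k,L]$ is globally $G$-invariant.

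For the second assertion, set $S_1:=\mathcal S[k,L]$. Because $k\subset L$ is Galois, it is separable, hence so is the subextension $k\subseteq S_1$. It remains to show $k\subseteq S_1$ is normal; combined with separability this gives that $k\subset S_1$ is Galois. Normality can be obtained in the standard way: let $N$ be the normal closure of $k\subseteq S_1$ inside $L$ (which exists since $L/k$ is itself normal, being Galois). Then $N$ is the compositum of the conjugates $\tau(S_1)$ as $\tau$ ranges over $G$; but we have just shown $\tau(S_1)=S_1$ for every $\tau\in G$, so $N=S_1$ and $k\subseteq S_1$ is normal. Alternatively one can invoke the Galois correspondence directly: $S_1=\mathrm{Fix}(H)$ for the subgroup $H:=\{\sigma\in G\mid \sigma|_{S_1}=\mathrm{id}\}$, and global $G$-invariance of $S_1$ forces $H$ to be normal in $G$, whence $k\subset S_1$ is Galois with group $G/H$.

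I do not expect a serious obstacle here; the statement is essentially a formal consequence of functoriality of the $G$-action on $[k,L]$ together with the fact that the socle is defined purely lattice-theoretically as the product of atoms (Definition \ref{8.1}(1)). The only point requiring a word of care is the passage from "every conjugate of $S_1$ equals $S_1$" to "$k\subset S_1$ is normal"; this is the classical criterion for normality of a finite separable extension sitting inside a Galois extension, and it is where one actually uses that $L/k$ is \emph{Galois} and not merely that $G$ acts. Once that is in place, separability is automatic and the conclusion follows.
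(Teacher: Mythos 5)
Your proof is correct and takes essentially the same route as the paper: both observe that the $G$-action permutes the atoms of $[k,L]$ (you via the induced lattice automorphism $\sigma_*$, the paper by writing each atom $A=k[x_A]$ and noting $\sigma(A)=k[\sigma(x_A)]\in\mathcal A$) and hence fixes their product $\mathcal S[k,L]$. For the second assertion the paper simply cites Bourbaki's criterion that a globally $G$-invariant subfield of a Galois extension is Galois over the base, while you spell out the normal-closure/fixed-subgroup argument behind that citation; the content is the same.
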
 

\begin{proof} By definition, $\mathcal S[k,L]=\prod_{A\in\mathcal A}A$. For any $A\in\mathcal A$, there exists $x_A\in A$ such that $A=k[x_A]$, so that $\mathcal S[k,L]=\prod_{A\in\mathcal A}k[x_A]$. For any $\sigma\in G$, we have $\sigma(\mathcal S[k,L])=\sigma(\prod_{A\in\mathcal A}k[x_A])=\prod_{A\in\mathcal A}k[\sigma(x_A)]$. Obviously, $k[\sigma(x_A)]\in\mathcal A$ which yields $\sigma(\mathcal S[k,L])\subseteq \mathcal S[k,L]$. But $\sigma$ being a $k$-isomorphism, for any $x\in L$ such that $k[x]\in\mathcal A$, and setting $y:=\sigma^{-1}(x)$, that is $x=\sigma(y)$, we have $k[x]=k[\sigma(y)]$, with $k[y]\in\mathcal A$. To conclude, $\mathcal S[k,L]\subseteq \sigma(\mathcal S[k,L])$ and $ \mathcal S[k,L]=\sigma(\mathcal S[k,L])$. This equality shows that $k\subset \mathcal S[k,L]$ is Galois by \cite[Proposition 5, page A V.54]{Bki A}.
\end{proof}

We  give here a complete study of
  the case of finite Galois distributive field extensions.  They are evidently  FIP. Recall that a finite Galois field extension is minimal if and only if its degree is a prime integer \cite[Proposition 2.2]{Pic}
  and is Boolean if and only if it is a cyclic extension with a square-free degree   \cite[Theorem 4.19]{Pic 10}. 

\begin{proposition} \label{8.121} A finite Galois field extension is distributive if and only if it is cyclic. 
\end{proposition}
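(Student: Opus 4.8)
The plan is to translate the statement into group theory via the Galois correspondence and exploit the order-reversing lattice isomorphism between $[k,L]$ and the lattice of subgroups of the Galois group $G:=\mathrm{Gal}(L/k)$. Since distributivity of a finite lattice is self-dual, $k\subset L$ is distributive if and only if the subgroup lattice $\mathrm{Sub}(G)$ is distributive. So the problem reduces to the purely group-theoretic fact: a finite group $G$ has distributive subgroup lattice if and only if $G$ is cyclic. This is a classical theorem of Ore, which I would cite (e.g. \cite[Exercise 15, page 125]{R} for the distributivity of the subgroup lattice of a cyclic group was already invoked in Example \ref{8.7}; the converse is Ore's theorem).

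First I would handle the easy direction: if $k\subset L$ is cyclic, then $G$ is cyclic, its subgroups are in bijection with the divisors of $|G|$ ordered by divisibility, and the divisor lattice of an integer is distributive (it is a product of chains, one for each prime power dividing $|G|$). Hence $\mathrm{Sub}(G)$ is distributive, and therefore so is $[k,L]$ by the Galois correspondence.

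For the converse, assume $k\subset L$ is distributive, so $\mathrm{Sub}(G)$ is distributive. The key step is Ore's theorem, whose proof I would sketch as follows. A distributive subgroup lattice is in particular modular, which forces $G$ to be nilpotent (a finite group is nilpotent iff its subgroup lattice is "supersolvable"; more cheaply, one shows a non-nilpotent group contains a non-modular sublattice coming from a non-normal Sylow situation). Hence $G$ is the direct product of its Sylow subgroups, and $\mathrm{Sub}(G)$ is the direct product of the $\mathrm{Sub}(P)$; a product of lattices is distributive iff each factor is, so it suffices to treat a $p$-group. For a $p$-group $P$, distributivity of $\mathrm{Sub}(P)$ rules out the elementary abelian group $(\mathbb Z/p)^2$ as a section (its subgroup lattice contains the diamond $M_{p+1}$, which is not distributive), so $P$ has a unique subgroup of order $p$; a $p$-group with a unique subgroup of order $p$ is cyclic (or generalized quaternion, but the quaternion group $Q_8$ again has a non-distributive subgroup lattice, being nonabelian with the diamond appearing among its order-$4$ subgroups — actually $Q_8$ has three subgroups of order $4$ all containing the center, giving a diamond). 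Hence each Sylow subgroup is cyclic, and a nilpotent group that is a direct product of cyclic groups of pairwise coprime prime-power orders is cyclic. Therefore $G$ is cyclic, i.e. $k\subset L$ is a cyclic extension.

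The main obstacle is that Ore's theorem is not entirely trivial, so the honest options are either to cite it outright or to give the condensed argument above, whose only real content is: (i) a distributive (hence modular) subgroup lattice forces nilpotency, and (ii) for $p$-groups, forbidding the diamond $M_{p+1}$ among sublattices forces a unique minimal subgroup and hence cyclicity. Everything else — the reduction to $p$-groups via the direct product decomposition, and the reassembly of cyclic Sylow factors into a cyclic group — is routine. In the write-up I would state the group-theoretic lemma cleanly, cite Ore for the hard implication, and keep the field-theoretic wrapper (Galois correspondence plus self-duality of lattice distributivity) explicit but short.
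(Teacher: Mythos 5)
Your proposal takes essentially the same route as the paper: reduce via the Galois correspondence (an order-reversing lattice isomorphism between $[k,L]$ and $\mathrm{Sub}(G)$) to the group-theoretic fact that a finite group has distributive subgroup lattice iff it is cyclic, which the paper simply cites from Hall (Theorem~19.2.1) exactly as you propose citing Ore. One caution on your sketch of the hard direction: modularity of $\mathrm{Sub}(G)$ does \emph{not} force nilpotency (Iwasawa's non-nilpotent $P^*$-groups, e.g.\ nonabelian groups of order $pq$, have modular subgroup lattices), so the nilpotency step must genuinely use distributivity — e.g.\ by excluding the diamond $M_{p+1}$ that appears already in the subgroup lattice of any nonabelian group of order $pq$ — rather than passing through modularity alone.
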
 

\begin{proof}  Let $k\subset L$ be a finite Galois field extension with Galois group $G$ and let $\mathcal G$ be the set of subgroups of $G$. In view of the Fundamental Theorem of Galois Theory, the maps $\varphi:[k,L]\to\mathcal G$ defined by $\varphi (K):=$Aut$_K(L)$, the group of $K$-automorphisms of $L$, for each $K\in[k,L]$ and $\psi:\mathcal G\to[k,L]$ defined  by $\psi(H):=$Fix$(H)$, the fixed field of $H$ in $L$, for each $H\in\mathcal G$ are reversing order isomorphisms of lattices, with $\varphi=\psi^{-1}$ (see \cite[Corollaire 2, page A V.65]{Bki A}). It follows that $k\subset L$ is distributive if and only if $\mathcal G$ is distributive if and only if $G$ is cyclic \cite[Theorem 19.2.1]{H}. 
\end{proof}

\begin{theorem} \label{8.131} Let $k\subset L$ be a finite cyclic field extension. Set $r:=[L:k]=\prod_{i=1}^mp_i^{\alpha_i}$, where the $p_i$ are distinct prime integers. Let 
$\mathbb D_r$
 be  the set of divisors of $r$  and  $\mathcal A$ be the set of atoms of $[k,L]$. Then:
\begin{enumerate}
 \item For each $i\in\mathbb N_m$, there is a unique $A_i\in\mathcal A$ such that $[A_i:k]=p_i$. Let $T\in[k,L]$. Then, $T\in\mathcal A$ if and only if $[T:k]=p_i$ for some $i\in\mathbb N_m$. 
 
\item $\mathcal S[k,L]=\prod_{i=1}^mA_i$.
 
\item Set $\alpha:=\sup\{\alpha_i\mid i\in\mathbb N_m\}$. Let $\{S_j\}_{j=0}^n$ be the Loewy series of $k\subset L$ and let $j\in\{0,\ldots,n\}$. Then, $[S_j:k]=\prod_{i=1}^mp_i^{\beta_i}$, where either $\beta_i=\alpha_i$ if $\alpha_i<j$ or $\beta_i=j$ if $\alpha_i\geq j$. Moreover, $T\in[k,L]$ is an atom of $[S_j,S_{j+1}]$ if and only if $[T:k]=\prod_{i=1}^mp_i^{\gamma_i}$, where there exists a unique $i_0$ such that $\beta_{i_0}<\alpha_{i_0}$,  satisfying
  $\gamma_i=\beta_i$ for each $i\neq i_0$ and $\gamma_{i_0}=\beta_{i_0}+1$. In particular, $\alpha=n$. 
  
\item   $S_j\subset S_{j+1}$ is Boolean for each $j\in\{0,\ldots,n-1\}$.
 
\item Let $T\in[k,L]$. Then, $T$ is a $\Pi$-irreducible element of $[k,L]$  if and only if $[T:k]=p_i^{\beta_i}$ for some $\beta_i\in\mathbb N_{\alpha_i}$ 
 and some $i\in\mathbb N_m$. 

\item $|[k,L]|=\mathrm{d}([L:k])=\prod_{i=1}^m(\alpha_i+1)=
|\mathbb D_r|$. 
Let ${\mathcal A}_j$ be the set of atoms of $[S_j,S_{j+1}]$. Then, $\ell[k,L]=\sum_{j=0}^{\alpha-1}|{\mathcal A}_j|=\sum_{i=1}^m\alpha_i$ which is  the number of  $\Pi$-irreducible elements of $[k,L]$. 

\item The following conditions are equivalent:
\begin{enumerate}
 \item $k\subset L$ is a $\mathcal P$-extension.
\item  $k\subset L$ is either Boolean or a chain.
  \item   either $m=1$ or $\alpha=1$.
\end{enumerate}

\end{enumerate}
\end{theorem}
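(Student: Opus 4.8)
\emph{Proof strategy.} The plan is to translate the entire statement into the combinatorics of the divisor lattice via the Galois correspondence, and then read off each item. First I would set up the dictionary. Since $k\subset L$ is cyclic Galois, the Fundamental Theorem of Galois Theory (as used in the proof of Proposition~\ref{8.121}) anti-isomorphically identifies $[k,L]$ with the lattice of subgroups of the cyclic group $G$ of order $r$; because a cyclic group has exactly one subgroup of each order dividing $r$, the assignment $K\mapsto[K:k]$ is in fact an \emph{order} isomorphism $[k,L]\to\mathbb D_r$, where $\mathbb D_r$ carries the divisibility order. Hence it is a lattice isomorphism carrying compositum to $\mathrm{lcm}$ and intersection to $\mathrm{gcd}$, and $\mathbb D_r$ is itself isomorphic to the product of chains $\prod_{i=1}^m\{0,1,\dots,\alpha_i\}$ via $p_1^{\beta_1}\cdots p_m^{\beta_m}\mapsto(\beta_1,\dots,\beta_m)$. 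Under this dictionary: the atoms of $[k,L]$ are the atoms of $\mathbb D_r$, i.e. the primes $p_i$, which gives (1); the socle $\mathcal S[k,L]=\prod_{A\in\mathcal A}A$ corresponds to $\mathrm{lcm}(p_1,\dots,p_m)=\prod_ip_i$, giving (2); and the $\Pi$-irreducible elements of $[k,L]$ correspond to the join-irreducible elements of $\mathbb D_r$, which — since the divisors of a prime power $p_i^{\beta_i}$ form a chain, while any divisor with two distinct prime factors is the $\mathrm{lcm}$ of two proper divisors — are exactly the $p_i^{\beta_i}$ with $1\le\beta_i\le\alpha_i$, giving (5).

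For (3) I would compute the Loewy series directly in $P:=\prod_i\{0,\dots,\alpha_i\}$, showing by induction on $j$ that $S_j$ corresponds to $(\min(\alpha_1,j),\dots,\min(\alpha_m,j))$. This is clear for $j=0$; assuming it for $j$, the interval $[S_j,L]$ in $P$ is the product of chains $\prod_i\{\min(\alpha_i,j),\dots,\alpha_i\}$, whose atoms raise exactly one coordinate $i$ with $\min(\alpha_i,j)<\alpha_i$ by one, so their supremum $\mathcal S[S_j,L]=S_{j+1}$ has $i$-th entry $\min(\alpha_i,j)+1$ when $j<\alpha_i$ and $\alpha_i$ otherwise, i.e. $(\min(\alpha_i,j+1))_i$. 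Writing $\min(\alpha_i,j)$ as $\alpha_i$ if $\alpha_i<j$ and as $j$ if $\alpha_i\ge j$ gives the stated $\beta_i$; the interval $[S_j,S_{j+1}]$ is then the Boolean cube on the coordinates $i$ with $\beta_i<\alpha_i$, so its atoms are precisely the $T$ obtained by incrementing exactly one such coordinate, which is the stated description of $\gamma_i$; and $S_n=L$ precisely when $j\ge\alpha_i$ for all $i$, so $n=\alpha$. Part (4) is then immediate, $[S_j,S_{j+1}]$ being (dual to) a Boolean lattice; alternatively it follows from Proposition~\ref{8.3} together with Proposition~\ref{8.121}.

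For (6), $|[k,L]|=|\mathbb D_r|=\mathrm{d}(r)=\prod_i(\alpha_i+1)$ is the standard divisor count; the length of the distributive lattice $[k,L]$ (distributive by Proposition~\ref{8.121}) equals its number of join-irreducibles by Birkhoff's theorem, i.e. \cite[Lemma 4, p.486]{BM}, which by (5) is $\sum_i\alpha_i$; and $\ell[k,L]=\sum_{j=0}^{\alpha-1}|\mathcal A_j|$ with $|\mathcal A_j|=\#\{i:\alpha_i>j\}$, which also sums to $\sum_i\alpha_i$ after interchanging the two summations. For (7): if $m=1$ then $\mathbb D_r$, hence $[k,L]$, is a chain; if $\alpha=1$ then $r$ is square-free and $k\subset L$ is Boolean by \cite[Theorem 4.19]{Pic 10}; in either case $[k,L]=\cup_{i=0}^{n-1}[S_i,S_{i+1}]$ trivially, which gives (c)$\Rightarrow$(a), and together with the stated characterizations of ``chain'' and ``Boolean'' gives (b)$\Leftrightarrow$(c). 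For (a)$\Rightarrow$(c) I argue by contraposition: if $m\ge2$ and $\alpha\ge2$, relabel so $\alpha_1\ge2$ and fix a prime $p_2\ne p_1$; the intermediate field $T$ of degree $p_1^2$ (tuple $(2,0,\dots,0)$) is not below $S_1$ (whose $p_1$-exponent is $1$) and not above any $S_i$ with $i\ge1$ (whose $p_2$-exponent is $\ge1$ while $T$'s is $0$), so $T\notin\cup_{i=0}^{n-1}[S_i,S_{i+1}]$ and $(\mathcal P)$ fails.

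The one delicate point — the main obstacle — is pinning down the dictionary exactly: that $K\mapsto[K:k]$ (rather than $K\mapsto[L:K]$) is the order isomorphism onto $\mathbb D_r$, so that compositum corresponds to $\mathrm{lcm}$ and not to $\mathrm{gcd}$. Once that is settled, all seven parts are bookkeeping on the product-of-chains poset, the only places needing a genuine (but short) computation being the induction in (3) and the choice of the witness field of degree $p_1^2$ in (7).
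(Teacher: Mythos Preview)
Your proposal is correct and follows essentially the same approach as the paper: both set up the order isomorphism $[k,L]\to\mathbb D_r$ via $K\mapsto[K:k]$, identify atoms with primes, compute the Loewy series by induction on $j$ tracking the prime exponents, invoke Proposition~\ref{8.3} (or the direct Boolean-cube observation) for (4), appeal to \cite[Lemma 4, p.486]{BM} for the length count in (6), and exhibit the degree-$p_1^2$ witness for the failure of $(\mathcal P)$ in (7). Your explicit identification of $\mathbb D_r$ with the product of chains $\prod_i\{0,\dots,\alpha_i\}$ and the formula $\beta_i=\min(\alpha_i,j)$ streamlines the induction in (3) compared with the paper's index bookkeeping via $m_j:=|\{i:\alpha_i>j\}|$, but this is a matter of presentation rather than a different argument.
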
 

\begin{proof} First observe that any subextension of $k\subset L$ is still cyclic 
 and $k\subset L$ is Galois distributive. Moreover, there is a lattice  isomorphism 
$\mathbb D_r\to [k,L]$,
 where $d\mapsto T$ such that $[T:k]=d$.

(1) Let $G$ be the Galois group of $k\subset L$. Then, $G\cong {\mathbb Z}/r{\mathbb Z}$, with $|G|=r$. Since $k\subset L$ is cyclic, it follows that for any integer 
$d\in \mathbb D_r$,
 there exists a unique $T\in[k,L]$ such that $[T:k]=d$, and conversely, 
 $[T:k]\in\mathbb D_r$
   for any $T\in[k,L]$. In particular, $k\subset T$ is minimal if and only if $[T:k]$ is a prime integer. Then, $T\in\mathcal A$ if and only if $[T:k]=p_i$ for some $i\in\mathbb{N}_m$, and for each $i\in\mathbb{N}_m$, there is a unique $A_i\in\mathcal A$ such that $[A_i:k]=p_i$.

(2) By definition, $\mathcal S[k,L]=\prod_{A\in\mathcal A}A=\prod_{i=1}^mA_i$.

(3) We  show by induction on $j\geq 0$ that $[S_j:k]=\prod_{i=1}^mp_i^{\beta_i}$, where either $\beta_i=\alpha_i$ if $\alpha_i<j$ or $\beta_i=j$ if $\alpha_i\geq j$.
 The induction hypothesis holds 
  clearly for $j=0$.

Assume that the induction hypothesis holds for some 
 $j\in\{0,\ldots,n-1\}$ so that $S_j\subset L$. Then, $[S_j:k]=\prod_{i=1}^mp_i^{\beta_i}$, where either $\beta_i=\alpha_i$ if $\alpha_i<j$ or $\beta_i=j$ if $\alpha_i\geq j$. Set $m_j:=|\{i\in\mathbb{N}_m\mid \alpha_i> j\}|$. There is no harm to renumber the $\alpha_i$'s so that $\alpha_i> j$ for each $i\leq m_j$ and $\alpha_i\leq  j$ for each $i> m_j$.  Let  $T$ be an atom of $[S_j,S_{j+1}]$, so that $S_j\subset T$ is minimal and $[T:S_j]=p_{i_0}$, for some $i_0\in \mathbb{N}_m$.  In particular, $[T:k]=p_{i_0}^{\beta_{i_0}+1}\prod_{i\neq i_0}p_i^{\beta_i}$, so that $\beta_{i_0}+1\leq \alpha_{i_0}$, which leads to $\beta_{i_0}<\alpha_{i_0}$. Then, $\beta_{i_0}=j$, that is $\alpha_{i_0}>j$ and $i_0\leq m_j$.

Conversely, if $i\leq m_j$, then $\alpha_i>j$, so that there exists $T\in[S_j,L]$ such that $[T:S_j]=p_i$, and $T$ is an atom of $S_j\subset L$. 

Since $S_{j+1}$ is the product of all atoms of $S_j\subset L$, it follows that 
  $[S_{j+1}:S_j]=\prod_{i=1}^{m_j}p_i$, giving $[S_{j+1}:k]=[S_{j+1}:S_j][S_j:k]=(\prod_{i=1}^{m_j}p_i)(\prod_{i=m_j+1}^mp_i^{\beta_i})(\prod_{i=1}^{m_j}p_i^{\beta_i})=\prod_{i=1}^mp_i^{{\beta}'_i}$, where ${\beta}'_i=\beta_i +1$ if $i\leq m_j$ and ${\beta}'_i=\beta_i$ for $i>m_j$. This means the following: if $\alpha_i\geq j+1>j$, then, ${\beta}'_i=\beta_i+1=j+1$,  if $\alpha_i< j$, then, ${\beta}'_i=\beta_i=\alpha_i$, and, if $\alpha_i=j$, then, ${\beta}'_i={\beta}_i=j=\alpha_i$. Hence, the  induction hypothesis  holds for $S_{j+1}$. 
In particular, $L=S_{\alpha}=S_n$ and $\alpha=n$. 

 (4)  $S_j\subset S_{j+1}$ is Boolean for each $j\in\{0,\ldots,n-1\}$ by Proposition \ref{8.3} since $k\subset L$ is distributive. In particular, we recover the fact that $[S_{j+1}:S_j]$ is square-free for each $j\in\{0,\ldots,n-1\}$.

(5) Let $T\in[k,L]$ be such that  $[T:k]=p_i^{\beta_i}$ for some $\beta_i\in \mathbb{N}_{\alpha_i}$ 
 and some $i\in\mathbb N_m$. 
 It follows that $[k,T]$ is a chain, since so is the Galois group of $k\subset T$ (isomorphic to $\mathbb Z/ p_i^{\beta_i}\mathbb Z$). Then, $T$ is $\Pi$-irreducible by Proposition  \ref{5.51}.

Assume now that $[T:k]$ is divided by at least two distinct prime integers. After a suitable reordering, we may assume that $[T:k]=\prod_{i=1}^rp_i^{{\beta}_i},\ r>1$ and $\beta_i>0$ for each $i\in \mathbb{N}_r$. In view of the Fundamental Theorem of Galois Theory, there exist $T_1,T_2\in[k,T]$ such that $[T:T_i]=p_i$ for $i=1,2$, so that $T_1\subset T$ and $T_2\subset T$ are two minimal field extensions as it is recalled before Proposition \ref{8.121}. Then,  $T$ is not  $\Pi$-irreducible by Proposition  \ref{5.51}.

(6) The equality $|[k,L]|=\mathrm{d}([L:k])$ is obvious because of the lattice isomorphism recalled at the beginning of the proof. By \cite[Lemma 4, p.486]{BM}, $\ell[k,L]$ is the number of $\Pi$-irreducible elements of $[k,L]$. Then, $\ell[k,L]=\sum_{i=1}^m\alpha_i$ by (5). To end, $\ell[k,L]=\sum_{j=0}^{\alpha-1}\ell[S_j,S_{j+1}]$ 
 by Proposition \ref{8.3}.
But each $[S_j,S_{j+1}]$ is Boolean, so that $\ell[S_j,S_{j+1}]=|{\mathcal A}_j|$ 
by 
 \cite[Theorem 3.1]{Pic 10}. This yields $\ell[k,L]=\sum_{j=0}^{\alpha-1}|{\mathcal A}_j|$. 

We can remark that we may consider $\ell[k,L]$ in two different ways :  when we write 
$$\begin{matrix}
 \ell[k,L]= & 1         & + & \ldots & +   & \ldots &  =\alpha_1  \\
 {}           & 1           & + & \ldots & +   & \ldots &   =\alpha_2  \\
{}  & \cdots & \cdots & \cdots &\cdots & \cdots & \cdots  \\
 {}          & 1           & +      & \ldots & + & \ldots &   =\alpha_m  \\
  {}  & = & {} & = & {} & {}   & {} \\
{}  & |{\mathcal A}_0| & {} & |{\mathcal A}_j| & {} & \cdots   & {}
\end{matrix}$$
in line $i$, we have the power of $p_i$ in $[L:k]$, which is the number of $\Pi$-irreducible elements whose degree of extension over $k$ is a power of $p_i$, and in column $j$, we have the number of atoms of $S_j\subset S_{j+1}$, with either $1$ or $0$ instead of $\ldots$, and $0$ after and under each $0$.

(7) We  discuss with respect to $m$ and $\alpha$.

If $m=1$, then $k\subset L$ is a chain and 
 a $\mathcal P$-extension  
 by Corollary ~\ref{8.31}. 

Assume $m>1$.

If $\alpha=1$, then $[L:k]=\prod_{i=1}^mp_i$ shows that $k\subset L$ is Boolean by 
 the remark before Proposition  \ref{8.121},  
 because its degree is square-free,
 and then 
  a $\mathcal P$-extension  
  since $[k,L]=[k,S_1]$.
  
 Assume that $\alpha>1$. After reordering, we may assume that $\alpha_1>1$. There exists $T\in[k,L]$ such that 
 $[T:k]=p_1^2$.
 Since $[S_1:k]=\prod_{i=1}^mp_i$ by (3), we get that  
  $T\not\in[k,S_1]\cup[S_1;L]$, so that $[k,L]\neq\cup_{i=0}^{n-1} [S_i, S_{i+1}]$ and
  $k\subset L$ is not a $\mathcal P$-extension 
  in this case. 
  In particular, $k\subset L$ is neither Boolean nor a chain. 

Gathering the different cases we get (7).
\end{proof} 

 \begin{remark} \label{8.132} Comparing  Theorem \ref{8.5} and Theorem \ref{8.131} (4), we see that the latter is a generalization of  Theorem \ref{8.5} in case of a finite cyclic field extension. Indeed, $k\subset L$ is distributive and  $S_j\subset S_{j+1}$ is Boolean for each $j\in\{0,\ldots,n-1\}$ whatever $k\subset L$ is  a $\mathcal P$-extension or not.
\end{remark}

\end{document}